\documentclass{amsart}
\usepackage{amsthm,amsmath,amsfonts,amssymb}
\usepackage[numbers,sort&compress]{natbib}
\usepackage[colorlinks,citecolor=blue,urlcolor=blue]{hyperref}
\usepackage{graphicx}
\usepackage[british]{babel}
\usepackage[usenames,dvipsnames]{xcolor}
\usepackage{anyfontsize}
\usepackage{comment}
\usepackage{dsfont}
\usepackage{mathrsfs}
\usepackage{mathabx}
\usepackage{xfrac}
\usepackage{mathtools}
\usepackage{wrapfig}

\usepackage[expansion=true, protrusion=true]{microtype}
\usepackage[all]{xy}
\usepackage{float}
\usepackage{subcaption}

\usepackage[T1]{fontenc}
\usepackage{geometry}

\usepackage[inline]{enumitem}

\usepackage{tabularx}
\usepackage{longtable}

\usepackage{afterpage}

\definecolor{brick}{HTML}{FF0800}

\theoremstyle{plain}
\newtheorem{theorem}{Theorem}
\newtheorem{lemma}[theorem]{Lemma}
  
  \newtheorem{corollary}[theorem]{Corollary}
  
  \newtheorem{fact}[theorem]{Fact}

\theoremstyle{definition}
  
  \newtheorem{definition}[theorem]{Definition}
  
  \newtheorem{example}[theorem]{Example}

  \newtheorem{condition}[theorem]{Condition}
  \newtheorem{ass}{Assumption}

\theoremstyle{remark}
  \newtheorem{remark}[theorem]{Remark}

  \newtheorem{application}[theorem]{Application}

\DeclareMathOperator{\proj}{proj}
\DeclareMathOperator{\diam}{diam}

  \DeclareMathOperator\Leb{Leb}

  \DeclareMathOperator{\dimH}{dim_H}
  \DeclareMathOperator{\dimP}{dim_P}
  \DeclareMathOperator{\dimB}{dim_B}
  \DeclareMathOperator{\dimS}{dim_S}

\def \N {\mathbb N}

\def \R {\mathbb R}
\def \ind{1\!\!1}

\newcommand*{\e}[1]{\text{e}^{#1}}

\title[Random self-similar IFSs]{Dimension Properties of a Family of Random Self-Similar Iterated Function Systems}
\subjclass[2020]{Primary 28A80, 60J80; Secondary 28A78, 37D35}
\keywords{random self-similar sets, overlapping iterated function systems, Hausdorff dimension, subadditive pressure, Galton--Watson trees, multitype branching processes in random environment}

\author{Vilma Orgoványi}
\address{Department of Stochastics, Institute of Mathematics, Budapest University of Technology and Economics, Műegyetem rkp.~3, H-1111 Budapest, Hungary}
\curraddr{HUN-REN Alfréd Rényi Institute of Mathematics, Reáltanoda utca 13--15, H-1053 Budapest, Hungary}
\email{orgovanyi.vilma@gmail.com}

\author{Károly Simon}
\address{Department of Stochastics, Institute of Mathematics, Budapest University of Technology and Economics, Műegyetem rkp.~3, H-1111 Budapest, Hungary}
\address{HUN-REN--BME Stochastics Research Group, Műegyetem rkp.~3, H-1111 Budapest, Hungary}
\email{karoly.simon51@gmail.com}

\begin{document}
\begin{abstract}
In this note we present a dimension formula for certain randomized self-similar sets. The self-similar sets we are considering are $d$-dimensional generalizations of rational projections of inhomogeneous sponges: they have overlaps, but the overlapping structure is in some sense well-organized. The randomization happens according to a labelled Galton--Watson tree corresponding to the cylinders of the IFS, for example as in the case of the Mandelbrot percolation set. The dimension formula is given in terms of a pressure function corresponding to the expectation matrices, which describes the overlapping structure and the randomness of the system.
\end{abstract}
\maketitle
\section{Introduction}
In this note we consider the Hausdorff, box and packing dimensions of some overlapping statistically self-similar sets. A special case of the random model we are considering is the Mandelbrot percolation process running on cylinders of certain self-similar iterated function systems (IFSs). The most natural examples of the one-dimensional IFSs in which we are interested are the rational projections of $2$-dimensional sponges. The projected IFS is conjugated by a similarity (see Section \ref{conjugation}) to an integer self-similar IFS. 
Our running examples are the following IFSs:
\begin{example}\label{ex:1d}
    The following IFS is conjugated by a similarity to the $45$-degree projection of the right-angled Sierpi\'nski carpet, see Figure \ref{fig:IFS}. 
    \begin{equation}
     \mathcal{S}=\left\{ S_i(x)=\frac{1}{2}x+i \right\}_{i=0}^2 
    \end{equation}
\end{example}
\begin{example}\label{ex:2d}
        The following is one of the simplest 2-dimensional examples for which the results of this note can be applied. See Figure \ref{fig:IFS} depicting the first level cylinders. The points for the translations are denoted by red.
    \begin{equation}
        \mathcal{S} =\left\{\frac{\underline{x}}{2}+\underline{t}_i\right\}_{i=1}^{9},
    \end{equation}
    where $\underline{t}_i$ runs through the set $\{0, 1, 2\}^2$.
\end{example}

   \begin{figure}
        \centering
        \includegraphics[width=0.315\linewidth]{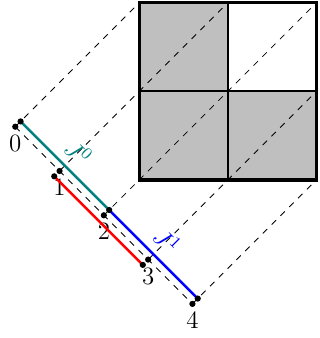}\hspace{1cm}\includegraphics[width=0.352\linewidth]{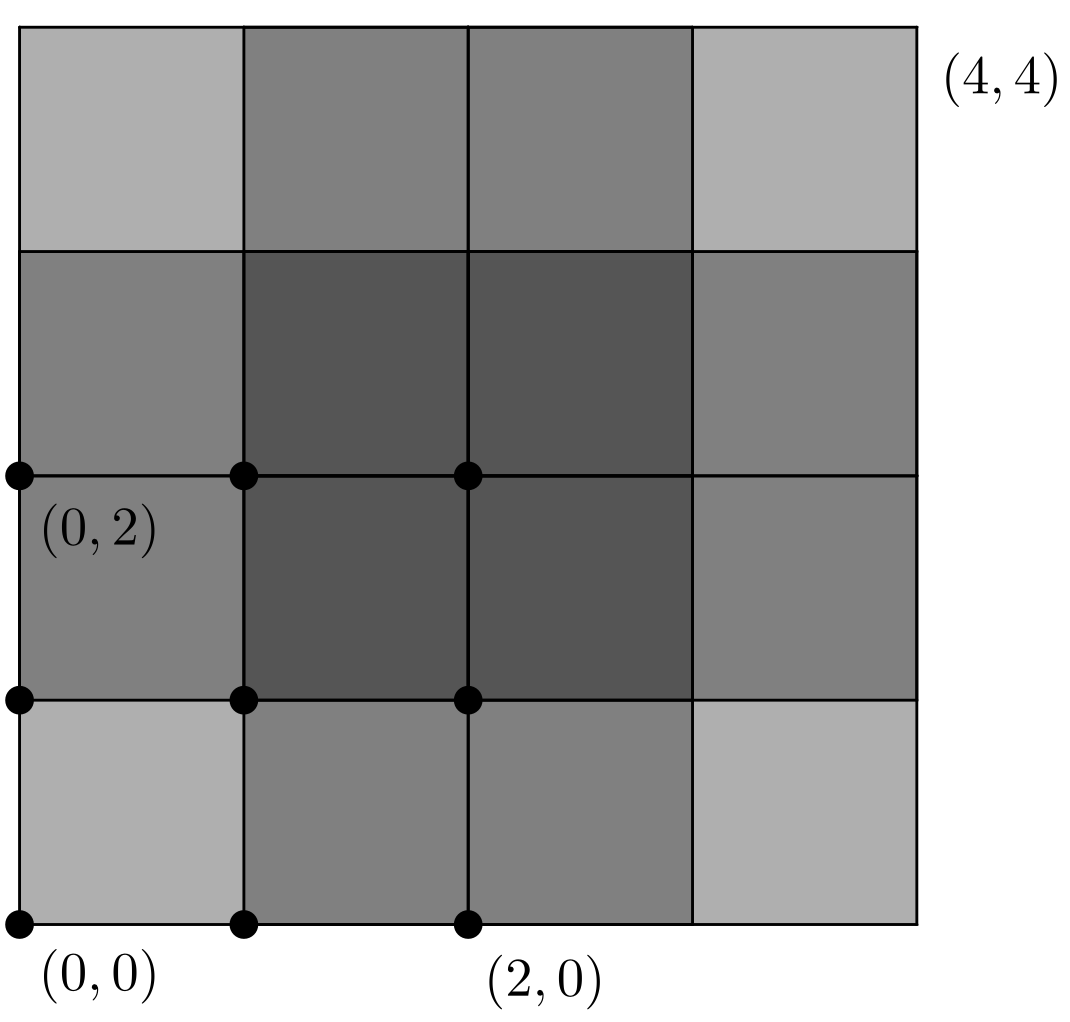}
        \caption{Depiction of Examples \ref{ex:1d} and \ref{ex:2d}.}\label{fig:IFS}
      \end{figure}

Generally, in dimension one the IFSs we consider are of the form
\begin{equation}\label{x99}
    \mathcal{S}=\{S_i(x)=x/L+t_i\}_{i=1}^M, \quad 2\leq L\in \mathbb{N}, \, t_i\in \mathbb{Q}.
\end{equation}
In dimension $d$ we are more restrictive as we require the following:

\begin{align}\label{x98}
    &\mathcal{S}:=\left\{
        S_i(\underline{x}):=\frac{1}{L}\underline{x}+\underline{t}_i
     \right\}_{i=1}^{M}
     , S_i:\mathbb{R}^{d}\to\mathbb{R}^{d},
    \notag \\
    &L\in\N \setminus \left\{0,1\right\},
    \, \underline{t}_i\in \mathbb{N}^{d},\, \exists h\in \mathbb{N},\,L-1|h,\,\forall j\in[d]_1\, 0=\min_{i}t_i(j), \, h=\max_{i}t_i(j),
    \end{align}
where $t_i(j)$ is the $j$-th coordinate of the $i$-th translation $\underline{t}_i$. One can easily construct such $d$-dimensional IFSs, a way to do this in dimension $2$ is explained in Figure \ref{fig:algo}. Here one should choose the contraction ratio $1/L$ and further a natural number $N\geq 2$. The importance of $N$ will be clear from Section \ref{detifs}.
\begin{figure}
        \centering
        \includegraphics[width=0.7\linewidth]{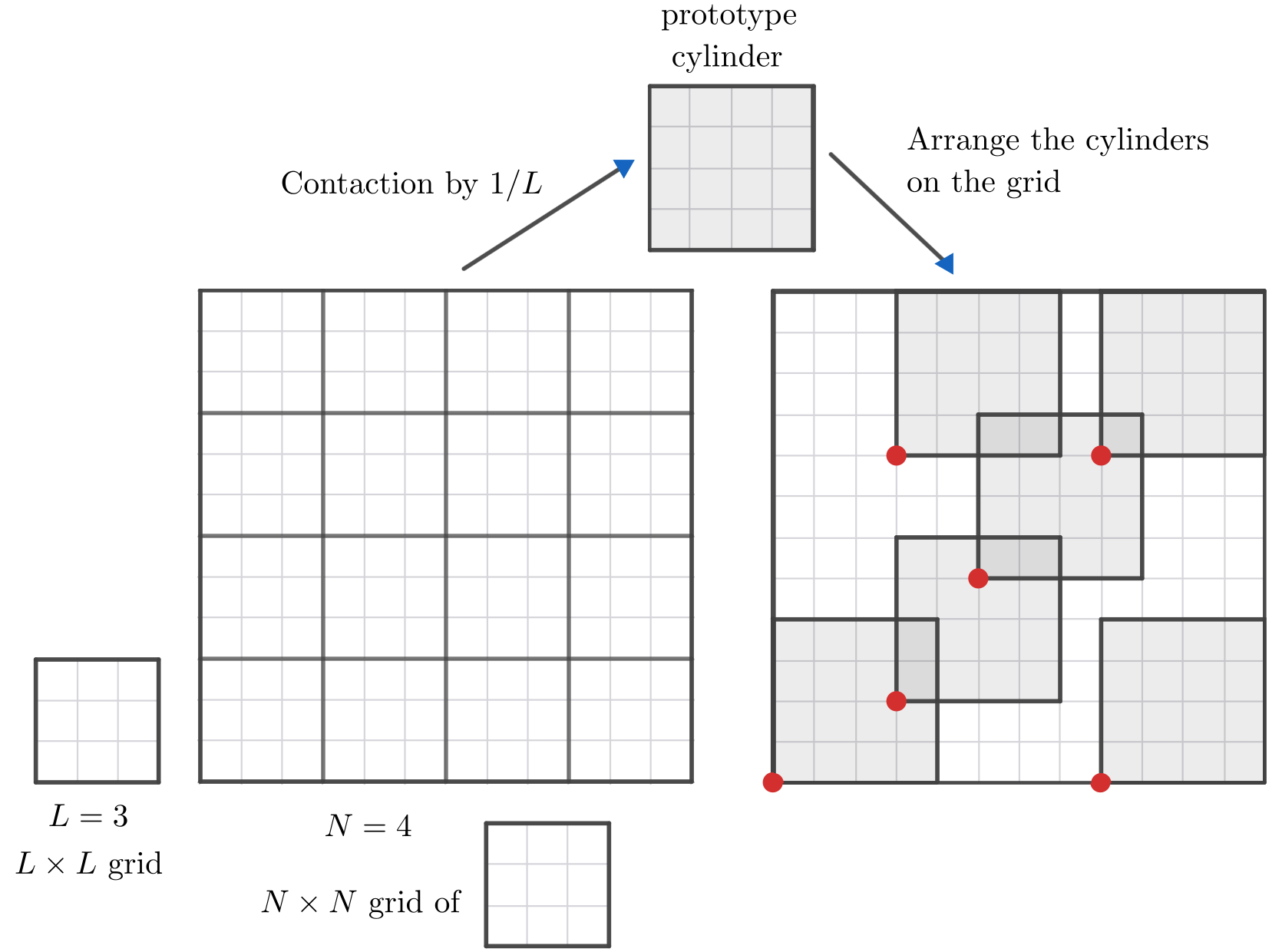}
        \caption{Depiction of the construction of an IFS satisfying \eqref{x98}. Here $L$ is the contraction ratio, and $N\in \mathbb{N}$ is chosen arbitrarily. We start with an $L\times L$ square, from this we form an $N\times N$ grid. We arrange the cylinders so that the grids are aligned. An example is shown in the right most image, where the cylinders are denoted with grey and their translations by red dots. The importance of $N$ is clear from Section \ref{detifs}.}\label{fig:algo}
      \end{figure}
    Throughout this note we call the one- and $d$-dimensional IFSs that satisfy \eqref{x99} and \eqref{x98} respectively \textit{integer self-similar IFS}s. 
    The overlapping structure of the cylinder sets of these IFSs can be described using certain matrices. When we consider the randomized attractor corresponding to the IFS, the overlapping structure can analogously be described using certain \textit{expectation matrices} (which we define in Section~\ref{sec:matrices}).

The dimension formula appears in terms of these expectation matrices under some positivity conditions imposed on the matrices.
Given the expectation matrices $\mathbf{M}_0, \dots, \mathbf{M}_{L^d-1}$ one can define the subadditive pressure:
\begin{equation}
    P(t)=P_{\mathbf{M}_0, \dots, \mathbf{M}_{L^d-1}}(t)
    := 
    \lim_{n\to\infty} 1/n \log(\sum_{\underline{\theta}\in \{0,\dots, L^d-1\}^n} \|\mathbf{M}_{\underline{\theta}}\|^t),
\end{equation}
where $\mathbf{M}_{\underline{\theta}}=\mathbf{M}_{\theta_1}\cdots \mathbf{M}_{\theta_n}$, for $\underline{\theta}=\theta_1\dots\theta_n$, and the matrix norm is $\|\|$, simply the 1-norm.

Assume that for all $\theta\in\{0,\dots, L^d-1\}$, $\mathbf{M}_{\theta}$ contains a strictly positive element in all rows and columns and there exists $\theta_1,\dots,\theta_n$ with $\theta_i\in \{0,\dots, L^d-1\}$ such that the product matrix $\mathbf{M}_{\theta_1}\dots \mathbf{M}_{\theta_n}$ is strictly positive. Let $\Lambda$ denote the random attractor. The dimension formula is given by 
\begin{equation}
    \dimH \Lambda= \inf_{t\in [0,1]}P(t)/\log(L),
\end{equation}
almost surely conditioned on non-extinction. 
For the precise definition of the random attractor see \eqref{a35}. For the precise statement see the main theorem of this paper: Theorem \ref{thm:dim}.

\subsection{Setup and notation}
 
In this note we confine ourselves to certain \textit {self-similar} IFSs, in dimension one these are of the form \eqref{x99} and in dimension $d$ they are conjugated via a similarity to \eqref{x98}. For a natural number $K>1$, we will often use the alphabets $\{0, \dots, K-1\}\eqqcolon[K]_0$ and $\{1,\dots, K\}\eqqcolon[K]_1$. Let $\Sigma_0^{[K]}=\{\emptyset\}$, where $\emptyset$ is the empty word.
For the finite alphabet $[K]$ (either $[K]_0$ or $[K]_1$) we use the following notation for $n\in \N$: 
\begin{equation*}
    \Sigma^{[K]}_n=[K] ^n, \quad \Sigma_*^{[K]}=\bigcup_{n=0}^{\infty} \Sigma^{[K]}_n, \quad \Sigma^{[K]}=[K]^{\N}.
\end{equation*} 
In natural language, these are the words of length $n$, the finite words, and the infinite words over the alphabet $[K]$ respectively. 
We also use the notational shorthand
\begin{equation*}
    S_{i_1\dots i_n}=S_{i_1}\circ\cdots\circ S_{i_n}.
\end{equation*}

For a $d$-dimensional ($d\geq 1$) self-similar IFS there exists a compact set $B\subset \R^d$, for which $S_i(B)\subset B$.
By a classical result of Hutchinson, for this IFS $\mathcal{S}$, (say, $\mathcal{S}=\{S_1,\ldots,S_M\}$) there exists a unique non-empty compact set satisfying 
\begin{equation*}
    \Lambda_{\mathcal{S}}=\bigcup_{i\in [M]_1} S_i(\Lambda_{\mathcal{S}}).
\end{equation*}
We refer to this set as the \textit{attractor}, and note that it can be equivalently defined as
\begin{equation*}
    \Lambda_{\mathcal{S}}=\bigcap_{n\in \N}\bigcup_{(i_1,\dots, i_n)\in [M]_1^n}S_{i_1\dots i_n}(B).
\end{equation*}

The definitions of the Hausdorff and box dimensions, the notion of conjugation of IFSs, and the terminology of symbolic spaces and trees are collected in the appendices.
\section{Model}

In this section, we introduce in detail the deterministic IFS underlying the random model which we will use throughout this note.
We are interested in IFSs which contain overlaps.
In general, overlaps lead to complex geometric structure which results in a wide range of complicated behaviour.
To be able to manage the overlaps, we restrict to a family of self-similar IFSs whose cylinders align with a fixed 
$L$-adic grid, and encode the overlaps by matrices recording how many cylinders of each type meet each grid cube.
This family of IFSs and their matrix representation was introduced and studied extensively by Ruiz in \cite{zbMATH05549434} where he considered self-similar measures on IFSs of this type.

\subsection{Deterministic integer self-similar IFSs}\label{detifs}

\subsubsection{Setup}
We fix an integer self-similar IFS $\mathcal{S}$, with common contraction ratio $1/L$, and $\#\mathcal{S}=M$.
Let $\zeta$ be the self-similar measure corresponding to the IFS $\mathcal{S}$ and the probability vector $(1/M, \dots, 1/M)$: for Borel sets $A$, the measure $\zeta$ satisfies
\begin{equation*}
    \zeta(A)=\sum_{j=1}^M \frac{1}{M} \zeta(S_j^{-1}A).
\end{equation*}
Next, we introduce a family of partitions (mod $0$) of $\mathbb{R}^d$ and refer to the elements of these partitions as \emph{$L$-adic cubes}:
\begin{multline*}
    \mathcal{D}_k\coloneqq
    \left\{
        \left[ (i_1-1)L^{-k},i_1 L^{-k} \right]\times \dots \times \left[ (i_d-1)L^{-k},i_d L^{-k} \right]:
        (i_1,\dots, i_d)=\underline{i}\in \mathbb{Z}^d
    \right\},\\
    k\in\left\{ -1,0,1,2,\dots \right\}.
\end{multline*}
Particular attention is given to those elements of $\mathcal{D}_{-1}$ which have positive $\zeta $-measure.
We call them \textit{basic cubes}.
We define the $N$ basic cubes as
\begin{equation*}
    \left\{J^{(u)}\coloneqq[b_u(1)L,(b_u(1)+1)L] \times \dots \times [b_u(d) L,(b_u(d)+1)L] \right\}_{u=1}^{N},
    \quad
    \underline{b}_u\in \mathbb{N}^d,\
    u=1,\dots ,N.
\end{equation*}
We call $u\in [N]_1$ (the cube $J^{(u)}$) a \emph{type}.
Here, $\{\underline{b}_u\}_{u=1, \dots, N}$ is arranged in a lexicographical order.
For $d=1$ the smallest interval that contains all the basic cubes is
\begin{equation*}
    I=[0,\text{Fix}(S_{M})]=\left[ 0,L\frac{t_{M}}{L-1} \right].
\end{equation*}
Here we used that $\frac{t_{M}}{L-1}\in\mathbb{N}$.

For $d>1$ it follows from condition~\eqref{x98} that the smallest cube containing all basic cubes is

\begin{equation}
    I=\Big[0,L\frac{h}{L-1}\Big]^d,
\end{equation}
similarly recall that $L-1$ divides $h$ according to our assumption.

For every $u\in [N]_1$ and $n\in\N$ the basic cube $J^{(u)}$ subdivides into $(L^{d})^{n}$ congruent subcubes of side-length $L^{-(n-1)}$ contained in $\mathcal{D}_{n-1}$, which we denote by $J _{\underline{\theta}}^{(u) }$, where $\underline{\theta }=(\theta _1,\dots ,\theta _n)\in [L^d]_0^n$.
We identify each $\theta\in[L^d]_0$ with $(\theta(1),\dots,\theta(d))\in[L]_0^d$ via $\theta=\sum_{j=1}^{d}\theta(j)L^{j-1}$. For $u\in[N]_1$ and $\underline{\theta}=(\theta_1,\dots,\theta_n)\in[L^d]_0^n$ let
\begin{equation*}
    J^{(u)}_{\underline{\theta}}\coloneqq\prod_{j=1}^{d}\Big[\,b_u(j)L+\sum_{\ell=1}^{n}\theta_\ell(j)L^{-(\ell-1)},\ b_u(j)L+\sum_{\ell=1}^{n}\theta_\ell(j)L^{-(\ell-1)}+L^{-(n-1)}\Big].
\end{equation*}

An example is shown in Figure \ref{fig:algo2}.
\begin{fact}\label{a96}
    For any $\underline{\ell}\in[M]_1^n$ and $ u\in [N]_1$ 
    \begin{enumerate}
        \item There exists $v\in[N]_1$ and $\widetilde{\underline{\theta}}\in [L^d]_0^n$ such that
        \begin{equation*}
            S_{\underline{\ell}}(J^{(u)})=J_{\widetilde{\underline{\theta}}}^{(v)}.
        \end{equation*}
        \item For $\underline{\theta}\in[L^d]_0^m$ 
        \begin{equation*}
            S_{\underline{\ell}}(J^{(u)}_{\underline{\theta}})=J_{\widetilde{\underline{\theta}}\underline{\theta}}^{(v)}.
        \end{equation*}
    \end{enumerate}
\end{fact}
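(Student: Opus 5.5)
The plan is to prove both parts by reducing to the case of a single map $S_i$ and then inducting on the length $n$ of $\underline{\ell}$. The case of one map can be checked directly from coordinates, because all translations $\underline{t}_i$ are integer vectors and the contraction ratio is $1/L$.

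\emph{Step 1 (one map, the whole basic cube).} Fix $i\in[M]_1$ and $u\in[N]_1$. In coordinate $j$ we have
\begin{equation*}
S_i(J^{(u)})=\prod_{j=1}^d\big[b_u(j)+t_i(j),\,b_u(j)+t_i(j)+1\big],
\end{equation*}
which is a unit cube with integer corner, so it lies in $\mathcal{D}_0$. Write $b_u(j)+t_i(j)=L\,c(j)+\widetilde\theta(j)$ with $\widetilde\theta(j)\in[L]_0$. Then $S_i(J^{(u)})$ lies in the unique element $Q=\prod_j[c(j)L,(c(j)+1)L]$ of $\mathcal{D}_{-1}$. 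We must show that $Q$ is a basic cube. By self-similarity,
\begin{equation*}
\zeta(S_iA)\ge \tfrac1M\,\zeta(S_i^{-1}S_iA)=\tfrac1M\,\zeta(A),
\end{equation*}
so $\zeta(Q)\ge\zeta(S_i(J^{(u)}))\ge \tfrac1M\zeta(J^{(u)})>0$. Hence $Q=J^{(v)}$ for some $v\in[N]_1$ with $\underline{b}_v=\underline c$. Comparing with the definition of $J^{(v)}_{\theta}$ for words of length one (the offset there is $\theta(j)L^0$) gives $S_i(J^{(u)})=J^{(v)}_{\widetilde\theta}$, where $\widetilde\theta=\sum_j\widetilde\theta(j)L^{j-1}$.

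\emph{Step 2 (one map, subcubes).} For $\underline\theta\in[L^d]_0^m$, apply $S_i$ to the defining corner of $J^{(u)}_{\underline\theta}$. This gives
\begin{equation*}
\tfrac1L\Big(b_u(j)L+\sum_{\ell=1}^m\theta_\ell(j)L^{-(\ell-1)}\Big)+t_i(j)
= b_v(j)L+\widetilde\theta(j)+\sum_{\ell=1}^m\theta_\ell(j)L^{-\ell},
\end{equation*}
and the side length $L^{-(m-1)}$ becomes $L^{-m}$. Re-indexing the sum, this is exactly the corner and side of $J^{(v)}_{\widetilde\theta\underline\theta}$. Note that the same $v$ and $\widetilde\theta$ from Step 1 work for every $\underline\theta$.

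\emph{Step 3 (induction on $n$).} Write $\underline\ell=\ell_1\underline\ell'$. By the induction hypothesis, $S_{\underline\ell'}(J^{(u)})=J^{(w)}_{\underline\theta'}$ and $S_{\underline\ell'}(J^{(u)}_{\underline\theta})=J^{(w)}_{\underline\theta'\underline\theta}$. Applying $S_{\ell_1}$ and using Step 2 for the type $w$ gives
\begin{equation*}
S_{\underline\ell}(J^{(u)})=J^{(v)}_{\widetilde\theta\underline\theta'}
\quad\text{and}\quad
S_{\underline\ell}(J^{(u)}_{\underline\theta})=J^{(v)}_{\widetilde\theta\underline\theta'\underline\theta}.
\end{equation*}
This proves both (1) and (2) with $\widetilde{\underline\theta}=\widetilde\theta\underline\theta'\in[L^d]_0^n$.

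The only non-bookkeeping point is in Step 1: showing that the $\mathcal{D}_{-1}$ cube containing $S_i(J^{(u)})$ really is a basic cube, i.e.\ has positive $\zeta$-measure. The measure inequality above settles this. One should also note that, since the corner of $S_i(J^{(u)})$ is an integer point, this cube lies in exactly one element of $\mathcal{D}_{-1}$, so the mod-$0$ nature of the partition causes no ambiguity.
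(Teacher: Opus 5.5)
Your proof is correct and complete. The paper gives no argument for Fact~\ref{a96} in the text; it defers to \cite[Fact 1.1]{orgovanyi_thesis}, so there is no internal proof to compare against. Your route is the natural direct verification. It has four steps:
\begin{enumerate}
\item the coordinate identity $S_i(J^{(u)})=\prod_j[b_u(j)+t_i(j),\,b_u(j)+t_i(j)+1]$;
\item the inequality $\zeta(S_iA)\ge\zeta(A)/M$, read off from the self-similarity equation, which shows that the element of $\mathcal{D}_{-1}$ containing $S_i(J^{(u)})$ is a basic cube;
\item re-indexing the corner of $J^{(u)}_{\underline\theta}$ to get the subcube statement;
\item induction via $S_{\underline\ell}=S_{\ell_1}\circ S_{\underline\ell'}$.
\end{enumerate}
You also correctly observe that the same $v$ and $\widetilde{\underline\theta}$ serve both parts of the statement.

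The one point worth making explicit is that you use integrality of the translations. In $d=1$, \eqref{x99} only assumes $t_i\in\mathbb{Q}$. For non-integer translations $S_i(J^{(u)})$ need not be aligned with $\mathcal{D}_0$; for example, $L=2$, $t=1/2$ gives $S([0,2])=[1/2,3/2]$. So the Fact has to be read for the similarity-conjugated system with $t_i\in\mathbb{N}$. The paper tacitly does this when it writes $I=[0,Lt_M/(L-1)]$ with $t_M/(L-1)\in\mathbb{N}$. Conversely, the normalizations $\min_i t_i(j)=0$ and $L-1\mid h$ in \eqref{x98} are not needed for this Fact, and your argument rightly does not use them.
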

For the proof see \cite[Fact 1.1]{orgovanyi_thesis}.
For every $\theta \in[L^d]_0$ we define the $N\times N$ matrix
\begin{equation}
    \label{a90}
    B_\theta (u,v) \coloneqq
    \#\left\{ \ell \in [M]_1:
        S_\ell (J^{(v)})=J _{\theta }^{(u) }
    \right\}.
\end{equation}
A depiction of this is shown in Figure \ref{fig:algo2}.
Further, for $\underline{\theta}\in [L^d]_0^n$ for $n>1$
\begin{equation*}
    B_{\underline{\theta}} (u,v) \coloneqq
    \#\left\{ \underline{\ell} \in [M]_1^n:
        S_{\underline{\ell}} (J^{(v)})=J _{\underline{\theta} }^{(u)}
    \right\}.
\end{equation*}
\begin{fact}\label{a97}
    For $\underline{\theta}=(\theta_1,\dots,\theta_n)\in [L^d]_0^n$ for $n>1$ we have 
    \begin{equation*}
        B_{\underline{\theta}}(u,v)= (\mathbf{B}_{\theta_1}\cdots\mathbf{B}_{\theta_n})(u,v).
    \end{equation*}
\end{fact}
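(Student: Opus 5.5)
The plan is induction on $n$, using Fact~\ref{a96} to split each composition $S_{\underline{\ell}}$ into its first map and the remaining ones. Since the cubes $J^{(u)}_{\underline{\theta}}$ for distinct pairs $(u,\underline{\theta})$ are distinct elements of $\mathcal{D}_{n-1}$, the identity $J^{(u)}_{\underline{\theta}}=J^{(u')}_{\underline{\theta}'}$ forces $u=u'$ and $\underline{\theta}=\underline{\theta}'$. This injectivity is what lets us read off types and digit strings uniquely, so I will check it first: the map $(u,\underline{\theta})\mapsto J^{(u)}_{\underline{\theta}}$ is injective because the basic cubes $J^{(u)}$ are distinct elements of $\mathcal{D}_{-1}$, and the subcubes inside one basic cube are indexed bijectively by $[L^d]_0^n$.

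\emph{Inductive step.} Assume the claim for words of length $n-1$; the case $n=1$ is just the definition~\eqref{a90}. Take $\underline{\ell}=(\ell_1,\underline{\ell}')$ with $\underline{\ell}'\in[M]_1^{n-1}$. By Fact~\ref{a96}(1), there are a unique type $w$ and a unique word $\underline{\eta}\in[L^d]_0^{n-1}$ with $S_{\underline{\ell}'}(J^{(v)})=J^{(w)}_{\underline{\eta}}$. By Fact~\ref{a96}(1) applied to the single map $S_{\ell_1}$, we have $S_{\ell_1}(J^{(w)})=J^{(u')}_{\theta'}$ for some $u'$ and $\theta'$. Fact~\ref{a96}(2) then gives
\begin{equation*}
S_{\underline{\ell}}(J^{(v)})=S_{\ell_1}\bigl(J^{(w)}_{\underline{\eta}}\bigr)=J^{(u')}_{\theta'\underline{\eta}}.
\end{equation*}
By injectivity, $S_{\underline{\ell}}(J^{(v)})=J^{(u)}_{\underline{\theta}}$ holds if and only if three conditions hold: $u'=u$, $\theta'=\theta_1$, and $\underline{\eta}=(\theta_2,\dots,\theta_n)$. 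In other words, it holds exactly when there is a type $w$ (necessarily unique) such that
\begin{equation*}
S_{\underline{\ell}'}(J^{(v)})=J^{(w)}_{\theta_2\dots\theta_n}\quad\text{and}\quad S_{\ell_1}(J^{(w)})=J^{(u)}_{\theta_1}.
\end{equation*}

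\emph{Counting.} This gives a bijection between $\{\underline{\ell}: S_{\underline{\ell}}(J^{(v)})=J^{(u)}_{\underline{\theta}}\}$ and the disjoint union over $w$ of the product sets $\{\ell_1: S_{\ell_1}(J^{(w)})=J^{(u)}_{\theta_1}\}\times\{\underline{\ell}': S_{\underline{\ell}'}(J^{(v)})=J^{(w)}_{\theta_2\dots\theta_n}\}$. Taking cardinalities yields
\begin{equation*}
B_{\underline{\theta}}(u,v)=\sum_w B_{\theta_1}(u,w)\,B_{\theta_2\dots\theta_n}(w,v),
\end{equation*}
and the induction hypothesis turns the second factor into $(\mathbf{B}_{\theta_2}\cdots\mathbf{B}_{\theta_n})(w,v)$, completing the induction.

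The main obstacle is justifying the uniqueness of $w$ and of the digit decomposition, that is, the injectivity described above. For this, note that a similarity with ratio $1/L$ maps a basic cube onto a cube of side $1$, namely an element of $\mathcal{D}_0$. Its position is determined by the translation, so no ambiguity in type or digits can arise.
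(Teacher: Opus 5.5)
Your proof is correct. You peel off the first map $S_{\ell_1}$ and use Fact~\ref{a96}(2) to get $S_{\underline{\ell}}(J^{(v)})=J^{(u')}_{\theta'\underline{\eta}}$. Injectivity of $(u,\underline{\theta})\mapsto J^{(u)}_{\underline{\theta}}$ then gives the bijection with the disjoint union over the intermediate type $w$, with the matrix indices in the correct order. The paper itself gives no proof and only cites \cite[Fact 1.2]{orgovanyi_thesis}, so there is nothing to compare against directly, but your induction from Fact~\ref{a96} is the natural composition-and-count argument for this fact.
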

The proof is presented in \cite[Fact 1.2]{orgovanyi_thesis}.
\begin{remark}
    In this note the matrices defined simultaneously for a letter $\theta\in [L^d]_0$ and for a word $\underline{\theta}\in[L^d]_0^n$ will satisfy the property in Fact~\ref{a97} that the matrix corresponding to a word $\underline{\theta}=\theta_1\dots \theta_n$ is equal to the product of matrices corresponding to the individual letters $\theta_1,\dots, \theta_n$.
    For this reason, it is safe to think about $\mathbf{A}_{\underline{\theta}}$ for some general family of matrices $\{\mathbf{A}_{\underline{\theta}}\}_{\underline{\theta}\in [L^d]_0^*}$ and the word $\underline{\theta}$ as the product $\mathbf{A}_{\theta_1}\cdots \mathbf{A}_{\theta_n}$.
\end{remark}
 We will use these matrices extensively in this note.

  \begin{figure}
        \centering
        \includegraphics[width=0.7\linewidth]{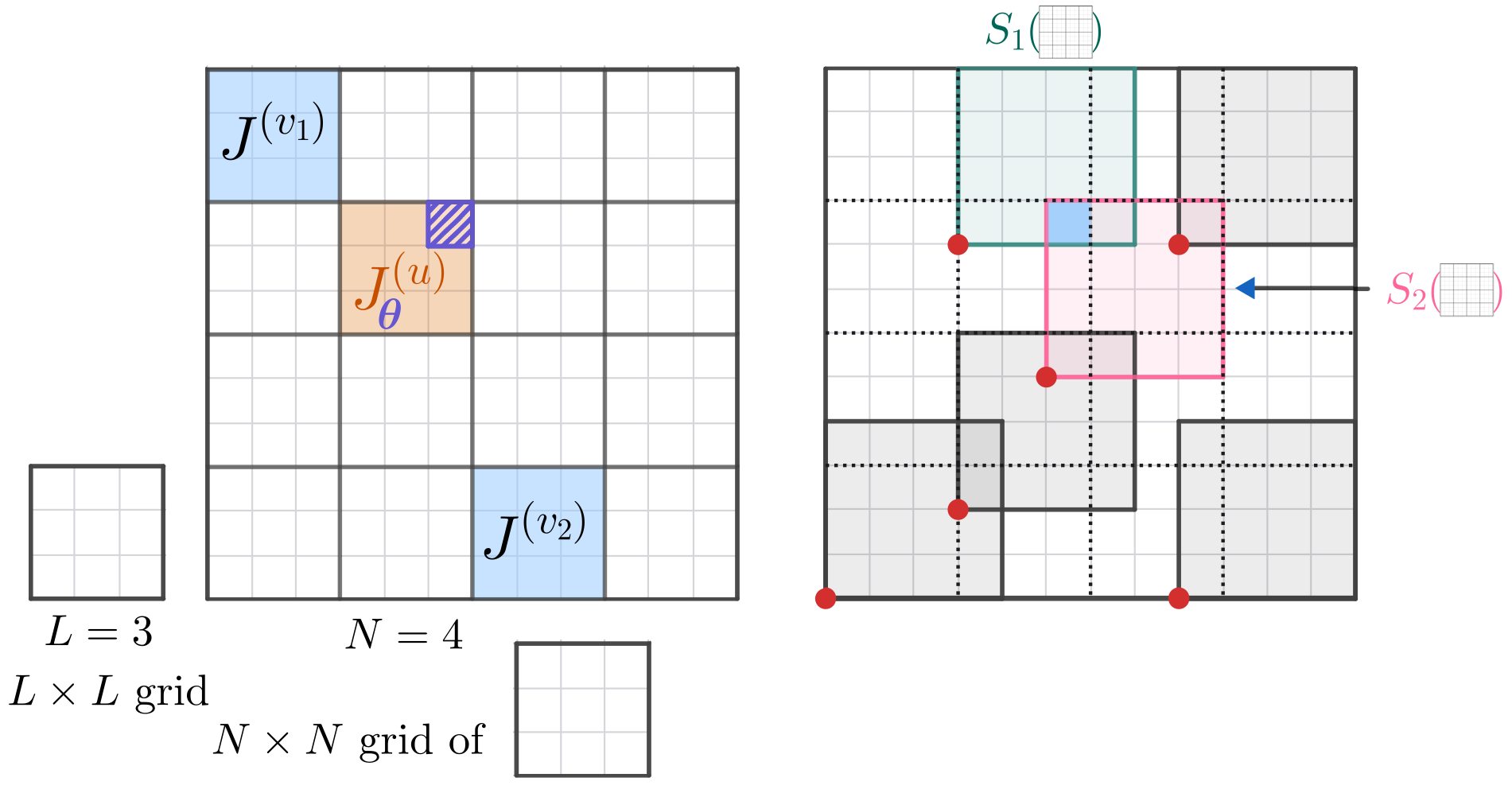}
        \caption{This figure is a continuation of Figure \ref{fig:algo}. This is a heuristic explanation for the construction of the matrices and the meaning of the types. The image shows an example type $J^{(u)}$, and $J^{(u)}_{\theta}$. Further in the example in the figure $S_1(J^{(v_2)})=S_2(J^{(v_1)})=J^{(u)}_{\theta}$. }\label{fig:algo2}
      \end{figure}

\section{Substitution random sets.}\label{a59}
In this section we introduce a model to randomize the above-defined integer self-similar IFSs, resulting in a random attractor.
Given a general IFS $\mathcal{S}=\{S_i(x)\}_{i=1}^M$, with $M$ maps, one can define a labelled Galton--Watson tree as a subset of a full $M$-ary tree (see Section \ref{app:symbolictrees}).
The approximations of this random tree are projected to $\mathbb{R}^d$ to form the approximations of the attractor as we explain below.

In the construction we assign a subset $\omega_{i_1\dots i_n}\subset [M]_1$ to \textit{each} node $i_1\dots i_n$ of $\mathcal{T}_M$, indicating which children survive, even if the parent itself is not retained. Survival is defined inductively: the root survives, and a level-$n$ node survives if its parent survives and the set assigned to the parent indicates so.

Fix an IFS $\mathcal{S}$ with $\#\mathcal{S}=M\geq 2$. 
Let $\Omega= \mathcal{P}([M]_1)$, the power set of $[M]_1$. Let $\mathcal{F}=\mathcal{P}(\mathcal{P}([M]_1))$ denote the usual $\sigma$-algebra on the finite set $\mathcal{P}([M]_1)$.
Next, fix a probability vector $\underline{p}=(p_h)_{h\in \Omega}$, that is, $p_h\geq 0$ and $\sum_{h\in \Omega}p_h=1$.
We define a probability measure $\mathbb{P}$ on $\Omega$ given for $A\in \mathcal{F}$ by
\begin{equation*}
  \mathbb{P}(A)=\sum_{h\in A}p_h.
\end{equation*}
This is the base probability space $(\Omega, \mathcal{F}, \mathbb{P})$.

Next, we consider the $M$-ary tree $\mathcal{T}_M$ and the product space 
\[
    \left(
        \Omega^{\mathcal{T}_M},
        \ \mathcal{F}^{\mathcal{T}_M},
        \ \mathbb{P}^{\mathcal{T}_M}
    \right)
    =
    \left(
        \prod_{{\underline i}\in \mathcal{T}_M} \Omega_{\underline i},\ 
        \bigotimes_{{\underline i}\in \mathcal{T}_M} \mathcal{F}_{\underline i},\ 
        \bigotimes_{{\underline i}\in \mathcal{T}_M} \mathbb{P}_{\underline i}
    \right),
\]
where $(\Omega_{\underline i}, \mathcal{F}_{\underline i}, \mathbb{P}_{\underline i})$ are independent copies of the base space $(\Omega, \mathcal{F}, \mathbb{P})$. 
Let $\underline{W}\colon\Omega\to \{0, 1\}^M$ denote the random variable
\begin{equation*}
    \underline{W}(\omega)=(\mathds{1}\{1\in \omega\}, \mathds{1}\{2\in \omega\}, \dots, \mathds{1}\{M \in \omega\}).
\end{equation*}
To each level-$n$ node $i_1,\dots, i_n\in \mathcal{T}_M$ we assign a copy of the vector random variable $\underline{W}^{[i_1\dots i_n]}=(W^{[i_1\dots i_n]}_{1},\dots, W^{[i_1\dots i_n]}_{M})$, a copy of $\underline{W}$.
Since the probability space is a product space and all random variables $\underline{W}^{[i_1\dots i_n]}$ depend only on the coordinate ${\underline i}=i_1\dots i_n$, the family of random variables $\{\underline{W}^{[i_1\dots i_n]}\}_{\underline{i}\in \mathcal{T}_M}$ is jointly independent.
We define another random variable for $\underline{i}\in \mathcal{T}_M$,
\begin{equation*}
    Q_{\underline i}=W^{[\emptyset]}_{i_1} W^{[i_1]}_{i_2}\dots W^{[i_1\dots i_{n-1}]}_{i_n}.
\end{equation*}
In this way, we can define the symbolic set of retained cylinders,
\begin{equation*}
    \mathcal{E}_n\coloneqq\{{\underline i}\in [M]_1^n: Q_{\underline i}=1\}.
\end{equation*}

The $n$-th approximation of the random set and the attractor are given by
\begin{equation}\label{a35}
    \Lambda_n=\bigcup_{{\underline i}\in \mathcal{E}_n}S_{\underline i}(I),\quad \Lambda=\bigcap_{n\in \N}\Lambda_n.
\end{equation}

Furthermore, we use the notation:
\begin{equation*}
    \mathcal{E}_{\infty}=\{\mathbf{i}\in [M]_1^{\N}:\: \forall n \in \N, \:\mathbf{i}|_n\in \mathcal{E}_n\}, 
\end{equation*}
and in this case the \emph{substitution random set} is
\begin{equation*}
    \Lambda=\pi(\mathcal{E}_\infty),
\end{equation*}
where $\pi$ is the natural projection corresponding to the IFS $\mathcal{S}$. The details are explained in Section \ref{survival}. We will assume throughout the note that $\mathbb{E}(\#\mathcal{E}_1)>1$. This guarantees that $\Lambda\neq \emptyset$ and $\mathcal{E}_{\infty}\neq \emptyset$ with positive probability. We call the event $\Lambda\neq \emptyset$ \emph{non-extinction}.

In what follows we will also consider the random (labelled) tree corresponding to the random sets $\mathcal{E}_n$. Throughout this note we call this \emph{labelled Galton–Watson tree}. Namely, let $\mathcal{T}$ be the random subtree of $\mathcal{T}_M$ whose vertices at depth $n$ are exactly the elements of $\mathcal{E}_n$.

\subsection{A special case: the coin-tossing model}
The \emph{coin-tossing model} is a special case of the substitution model.
Consider again an IFS $\mathcal{S}$ with $\#\mathcal{S}=M$.
The distribution $\mathbb{P}$ on $H=\mathcal{P}([M]_1)$ is the following: 
\begin{equation*}
    p_h\coloneqq p^{\#h}(1-p)^{M-\#h}, \quad h\in H.
\end{equation*}
We refer to this as the coin-tossing model, as the retention of the cylinders is determined via independent coin tosses.

The examples appearing in this note mostly correspond to the coin-tossing model.
One reason for this is that the coin-tossing model already shows a colourful palette of behaviour.
Another reason is that the coin-tossing model is simpler to work with since the expectation matrices (which we formally define in Section \ref{sec:matrices}) for $\theta\in[L^d]_0$ and $\underline{\theta}\in [L^d]_0^n$ can be written as
\begin{equation*}
    \mathbf{M}_{\theta}=p\cdot \mathbf{B}_{\theta},\, \mathbf{M}_{\underline{\theta}}=p^n\mathbf{B}_{\underline{\theta}}.
\end{equation*}
Here, we recall that $B_\theta (i,k) =
\#\left\{ \ell \in [M]_1:
S_\ell (J^{(k)})=J _{\theta }^{(i) }
\right\}$ was defined in \eqref{a90}.

The random attractor in the coin-tossing case will usually be denoted by $\Lambda_{p}$ to emphasize the dependency on the probability parameter $p$.

\section{Expectation matrices and the pressure function}\label{sec:matrices}
The overlapping structure of the random set is encoded by matrices which count, in expectation, how many cylinders of a given type a given $L$-adic cube contains.
For $\theta \in[L^d]_0$ we define the $N\times N$ \emph{expectation matrix} $\mathbf{M}_\theta$ by
\begin{equation*}
    M_\theta (u,v) \coloneqq
    \mathbb{E}\left[\#\left\{ \ell \in \mathcal{E}_1:
        S_\ell (J^{(v)})=J _{\theta }^{(u) }
    \right\}\right]
    =
    \sum_{\substack{\ell\in[M]_1\\ S_\ell (J^{(v)})=J _{\theta }^{(u)}}} \mathbb{P}(\ell\in \mathcal{E}_1).
\end{equation*}
Clearly, $
    \mathbb{P}(\ell\in\mathcal{E}_1)=\sum_{\ell \in h}p_h$.
More generally, for $\underline{\theta}\in [L^d]_0^n$ we set
\begin{equation}\label{eq:expmatrixword}
    M_{\underline{\theta}} (u,v) \coloneqq
    \mathbb{E}\left[\#\left\{ \underline{\ell} \in \mathcal{E}_n:
        S_{\underline{\ell}} (J^{(v)})=J _{\underline{\theta} }^{(u)}
    \right\}\right].
\end{equation}
\begin{fact}
    For $\underline{\theta}=(\theta_1,\dots,\theta_n)\in [L^d]_0^n$ the matrices satisfy
    $\mathbf{M}_{\underline{\theta}}=\mathbf{M}_{\theta_1}\cdots\mathbf{M}_{\theta_n}$.
\end{fact}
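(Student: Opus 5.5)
We argue by induction on $n$, mirroring the deterministic Fact~\ref{a97}. The case $n=1$ is the definition. For the inductive step, write $\underline{\theta}=\theta_1\underline{\theta}'$ with $\underline{\theta}'=(\theta_2,\dots,\theta_n)$. We show
\begin{equation*}
M_{\underline{\theta}}(u,v)=\sum_{w\in[N]_1} M_{\theta_1}(u,w)\,M_{\underline{\theta}'}(w,v).
\end{equation*}
Combined with the induction hypothesis $\mathbf{M}_{\underline{\theta}'}=\mathbf{M}_{\theta_2}\cdots\mathbf{M}_{\theta_n}$, this gives the claim.

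\emph{Step 1: a bijection on index words.} Let $\underline{\ell}=\ell_1\underline{\ell}'\in[M]_1^n$ satisfy $S_{\underline{\ell}}(J^{(v)})=J^{(u)}_{\underline{\theta}}$. By Fact~\ref{a96}(1), $S_{\underline{\ell}'}(J^{(v)})=J^{(w)}_{\underline{\eta}}$ for a unique $w$ and a unique $\underline{\eta}\in[L^d]_0^{n-1}$. By Fact~\ref{a96}(1), again $S_{\ell_1}(J^{(w)})=J^{(u')}_{\eta_0}$, and then Fact~\ref{a96}(2) gives $S_{\underline{\ell}}(J^{(v)})=J^{(u')}_{\eta_0\underline{\eta}}$.

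The cubes $J^{(u)}_{\underline{\theta}}$ are distinct for distinct pairs $(u,\underline{\theta})$, since the basic cubes are distinct elements of $\mathcal{D}_{-1}$ and the subdivision is a grid partition. Hence $u'=u$, $\eta_0=\theta_1$ and $\underline{\eta}=\underline{\theta}'$.

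It follows that the set of admissible $\underline{\ell}$ is the disjoint union over $w$ of the products
\begin{equation*}
\{\ell_1: S_{\ell_1}(J^{(w)})=J^{(u)}_{\theta_1}\}\times\{\underline{\ell}': S_{\underline{\ell}'}(J^{(v)})=J^{(w)}_{\underline{\theta}'}\}.
\end{equation*}
The converse inclusion holds directly by Fact~\ref{a96}(2). This is exactly the argument behind Fact~\ref{a97}, and I take it as known.

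\emph{Step 2: factorising the probabilities.} By linearity of expectation,
\begin{equation*}
M_{\underline{\theta}}(u,v)=\sum_{\underline{\ell}} \mathbb{P}(\underline{\ell}\in\mathcal{E}_n),
\end{equation*}
where the sum runs over the admissible $\underline{\ell}$. Here
\begin{equation*}
\mathbb{P}(\underline{\ell}\in\mathcal{E}_n)=\mathbb{E}\,Q_{\underline{\ell}}=\mathbb{E}\Big[W^{[\emptyset]}_{\ell_1}W^{[\ell_1]}_{\ell_2}\cdots W^{[\ell_1\dots\ell_{n-1}]}_{\ell_n}\Big].
\end{equation*}
The factors are attached to distinct nodes of $\mathcal{T}_M$, so they are independent, and each is a copy of $\underline{W}$. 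Therefore
\begin{equation*}
\mathbb{P}(\underline{\ell}\in\mathcal{E}_n)=\prod_{k}\mathbb{P}(\ell_k\in\mathcal{E}_1)=\mathbb{P}(\ell_1\in\mathcal{E}_1)\,\mathbb{P}(\underline{\ell}'\in\mathcal{E}_{n-1}).
\end{equation*}
Summing over the product decomposition from Step 1 gives $\sum_w M_{\theta_1}(u,w)M_{\underline{\theta}'}(w,v)$.

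The main point requiring care is the independence claim. It relies on the construction assigning an independent $\underline{W}$ to \emph{every} node, including non-retained ones, so that the product $Q_{\underline{\ell}}$ involves $n$ distinct independent coordinates. The combinatorial bijection itself is routine given Fact~\ref{a96}.
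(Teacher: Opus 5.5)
Your proof is correct. It is the natural argument: you split each admissible word $\underline{\ell}=\ell_1\underline{\ell}'$ over the intermediate type $w$ (exactly the mechanism behind Fact~\ref{a97}), and combine this with the path product formula $\mathbb{P}(\underline{\ell}\in\mathcal{E}_n)=\prod_{k}\mathbb{P}(\ell_k\in\mathcal{E}_1)$, which holds because the factors of $Q_{\underline{\ell}}$ come from independent label vectors at distinct nodes. The paper gives no argument of its own here and only cites \cite[Fact 1.4]{orgovanyi_thesis}, so there is no different route to compare against.
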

For the proof see \cite[Fact 1.4]{orgovanyi_thesis}.
Here and throughout, $\|\cdot\|$ denotes the $1$-norm of a matrix, that is the sum of the absolute values of its entries.

\begin{ass}[Jointly primitive]\label{ass:pos}
    The family $\mathcal{M}=\{\mathbf{M}_\theta\}_{\theta\in[L^d]_0}$ of expectation matrices satisfies:
    \begin{enumerate}[label=\textbf{(P\arabic*)}]
        \item for every $\theta\in [L^d]_0$ the matrix $\mathbf{M}_{\theta}$ is \emph{allowable}, namely it contains a strictly positive
              entry in every row and in every column;
        \item there exist $n\in\N$ and $\theta_1,\dots,\theta_n\in[L^d]_0$ such that the product
              $\mathbf{M}_{\theta_1}\cdots \mathbf{M}_{\theta_n}$ is strictly positive.
    \end{enumerate}
\end{ass}

\begin{definition}[Pressure function]\label{def:pressure}
    Let $\mathbf{M}_0, \dots, \mathbf{M}_{L^d-1}$ be the expectation matrices of a substitution random set. We assume that each of them is non-negative and are jointly primitive (see Assumption \ref{ass:pos}).
    Then the pressure function corresponding to $\mathcal{M}=\{\mathbf{M}_{0}, \dots, \mathbf{M}_{L^d-1}\}$ is
    \begin{equation}\label{y99}
        P(t)\coloneqq \lim_{n\to \infty} \frac{1}{n} \log \left(\sum_{\underline{\theta}\in [L^d]_0^n}\|\mathbf{M}_{\underline{\theta}}\|^t\right).
    \end{equation}
\end{definition}
The existence of the limit, the convexity and the smoothness of $P$ are known in the literature and are recalled in Section~\ref{a46}.

\section{The main theorem}

\begin{theorem}[Main theorem]\label{thm:dim}
    Let $\mathcal{S}$ be an integer self-similar IFS on $\R^d$ conjugate by a similarity to an IFS of the form \eqref{x99} for $d=1$
    and \eqref{x98} for $d>1$, with common contraction ratio $1/L$ and $\#\mathcal{S}=M$.
    Let $\Lambda$ be the substitution random set corresponding to $\mathcal{S}$ and to an offspring
    distribution $\underline{p}=(p_h)_{h\subset [M]_1}$, and let $P$ be the associated pressure function.
    If Assumption~\ref{ass:pos} holds, then almost surely conditioned on non-extinction
    \begin{equation}\label{a42}
        \dimH(\Lambda)=\dimB(\Lambda)=\dimP(\Lambda)=\inf_{t\in[0, 1]}\frac{P(t)}{\log(L)}.
    \end{equation}
\end{theorem}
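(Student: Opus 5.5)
The plan is to encode $\Lambda$ through the $L$-adic cubes it meets and reduce both bounds to counting nonempty cubes. For $u\in[N]_1$ and $\underline{\theta}\in[L^d]_0^n$ let
$$Z_n(u,\underline{\theta})=\big(\#\{\underline{\ell}\in\mathcal{E}_n: S_{\underline{\ell}}(J^{(v)})=J^{(u)}_{\underline{\theta}}\}\big)_{v\in[N]_1}.$$
By Fact~\ref{a96} and \eqref{eq:expmatrixword}, $\mathbb{E}Z_n(u,\underline{\theta})$ is the $u$-th row of $\mathbf{M}_{\underline{\theta}}$. Moreover, along a fixed infinite word $\theta_1\theta_2\dots$ the vectors $Z_n$ form a multitype branching process in the varying environment $(\mathbf{M}_{\theta_i})$ with bounded offspring (at most $M$). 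Up to a bounded number of neighbouring cubes, $\Lambda\cap J^{(u)}_{\underline{\theta}}\neq\emptyset$ forces $Z_n(u,\underline{\theta})\neq0$. So if $N_n$ is the number of level-$n$ cubes meeting $\Lambda$, then
$$\mathbb{E}N_n\lesssim\sum_{u,\underline{\theta}}\mathbb{P}(Z_n(u,\underline{\theta})\neq0)\le\sum_{\underline{\theta}}\min(1,\|\mathbf{M}_{\underline{\theta}}\|)\le\sum_{\underline{\theta}}\|\mathbf{M}_{\underline{\theta}}\|^t$$
for every $t\in[0,1]$, using Markov's inequality and $\min(1,x)\le x^t$. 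Markov's inequality again and Borel--Cantelli then give $\overline{\dim}_B\Lambda\le\inf_{t\in[0,1]}P(t)/\log L$ almost surely. This bounds $\dimP$ and $\dimB$ from above as well, so it remains to show that $\dimH\Lambda$ is at least this value on non-extinction.

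For the lower bound I fix a minimiser $t^*\in[0,1]$ of $P$ on $[0,1]$. Joint primitivity (Assumption~\ref{ass:pos}) gives quasi-multiplicativity, $\|\mathbf{M}_{\underline{\theta}}\mathbf{M}_{\underline{\omega}}\mathbf{M}_{\underline{\eta}}\|\ge c\|\mathbf{M}_{\underline{\theta}}\|\,\|\mathbf{M}_{\underline{\eta}}\|$ for some bridging word $\underline{\omega}$ of bounded length. Using the results of Section~\ref{a46}, this yields a quasi-Bernoulli equilibrium measure $\mu_{t^*}$ on $[L^d]_0^{\N}$. Convexity shows that either $t^*=1$ with $P'(1)\le0$, or $t^*<1$ with $\frac1n\log\|\mathbf{M}_{\underline{\theta}|n}\|\to P'(t^*)\ge 0$ for $\mu_{t^*}$-typical $\underline{\theta}$, with equality when $t^*\in(0,1)$. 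I then fix a large block length $k$ and define a set $G_k$ of good words of length $k$. A word is good if it is $\mu_{t^*}$-typical, satisfies $\|\mathbf{M}_{\underline{\theta}|j}\|\ge e^{-\varepsilon k}$ for all prefixes $j\le k$ (obtained via an ergodic/large-deviation argument for the Lyapunov exponent under $\mu_{t^*}$), and satisfies
$$\min(1,\|\mathbf{M}_{\underline{\theta}}\|)\,\#G_k\ge e^{k(P(t^*)-\varepsilon)}.$$

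The key probabilistic input is that for $\underline{\theta}\in G_k$ the branching process started from a single cylinder survives along $\underline{\theta}$ with probability at least $e^{-2\varepsilon k}\min(1,\|\mathbf{M}_{\underline{\theta}}\|)$. I would prove this by Paley--Zygmund: the second moment of $Z_k$ is a sum over the generation of the last common ancestor of terms $\mathbf{M}_{\underline{\theta}|j}\,(\text{bounded})\,\mathbf{M}_{\sigma^j\underline{\theta}}\mathbf{M}_{\sigma^j\underline{\theta}}$. Quasi-multiplicativity together with the prefix lower bounds should give
$$\mathbb{E}\|Z_k\|^2\le e^{\varepsilon k}\,(\mathbb{E}\|Z_k\|)^2/\min(1,\|\mathbf{M}_{\underline{\theta}}\|).$$

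With this I build an embedded Galton--Watson tree of cubes. In each surviving block-cube I select one surviving cylinder and keep only its descendants; distinct selected cubes then descend from disjoint cylinders and evolve independently. The resulting process is a genuine Galton--Watson process whose individuals are cubes of side $L^{-k}$ times the parent's, with mean offspring at least $e^{k(P(t^*)-3\varepsilon)}$. By the classical dimension formula for Galton--Watson fractals (Hawkes/Lyons, or the mass distribution principle on the limit branching measure), its limit set has Hausdorff dimension at least $(P(t^*)-3\varepsilon)/\log L$ on its survival. For the $0$--$1$ upgrade to almost surely on non-extinction, note that on non-extinction the number of retained cylinders tends to infinity. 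Each retained cylinder independently starts such an embedded process with a fixed positive probability, so one of them survives almost surely. Letting $\varepsilon\to0$ finishes the proof.

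The main obstacle is the survival estimate in the critical regime $t^*\in(0,1)$, where $\|\mathbf{M}_{\underline{\theta}}\|$ is only subexponential along typical words. There the second-moment bound depends on controlling all intermediate products uniformly, which is why the prefix condition is built into $G_k$. I would first try to get that condition from the quasi-Bernoulli property combined with a ballot-type lemma for the additive cocycle $\log\|\mathbf{M}_{\underline{\theta}|j}\|$. The shift by $\varepsilon k$ would be absorbed using a short prefix of positive-norm growth, inserted via a primitive word.
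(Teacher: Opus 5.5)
Your upper bound follows the paper's argument, and skipping Fekete's lemma is harmless because Markov and Borel--Cantelli at a minimiser $t^*$ already suffice. Your lower bound takes a genuinely different, unified route: an embedded Galton--Watson process of cubes, driven by a Paley--Zygmund survival estimate. The paper instead argues phase by phase. It uses positivity of $\widetilde{\mu}_{t^*}(\Lambda)$ when $P'(t^*)>0$, a perturbation when $\eta=0$, and light words with an almost injective projection when $\eta<0$.

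Your architecture can work, but its central survival estimate is not established, and the mechanism you propose for it fails, for two reasons.

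\emph{The prefix condition is incompatible with the offspring requirement.} Requiring $\|\mathbf{M}_{\underline\theta|_j}\|\ge e^{-\varepsilon k}$ for all $j\le k$ (in particular $j=k$) breaks down when $t^*=1$ and $P'(1)<0$. Take $\overline{t}>1$ and $\xi>0$ as in the proof of Lemma~\ref{bigslices}. Then
\[
\sum_{\|\mathbf{M}_{\underline\theta}\|\ge e^{-\varepsilon k}}\|\mathbf{M}_{\underline\theta}\|\le e^{\varepsilon(\overline{t}-1)k}\sum_{\underline\theta\in[L^d]_0^k}\|\mathbf{M}_{\underline\theta}\|^{\overline{t}}\le e^{k(P(1)-\xi/2)}
\]
for small $\varepsilon$ and $k\ge N_0$. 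Your third condition, however, forces $\sum_{\underline\theta\in G_k}\min(1,\|\mathbf{M}_{\underline\theta}\|)\ge e^{k(P(1)-\varepsilon)}$. In this phase the mass sits on light words: $\mu_1$-typical words have $\|\mathbf{M}_{\underline\theta}\|\approx e^{kP'(1)}$, and your $G_k$ excludes exactly these.

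\emph{Prefix lower bounds do not control the second moment.} The last-common-ancestor terms have size roughly $\|\mathbf{M}_{\underline\theta|_j}\|\,\|\mathbf{M}_{\theta_{j+2}\cdots\theta_k}\|^2$. Bounding them by $e^{\varepsilon k}\|\mathbf{M}_{\underline\theta}\|^2/\min(1,\|\mathbf{M}_{\underline\theta}\|)$ via prefix lower bounds requires $\|\mathbf{M}_{\underline\theta|_j}\|\,\|\mathbf{M}_{\theta_{j+1}\cdots\theta_k}\|\lesssim e^{\varepsilon k}\|\mathbf{M}_{\underline\theta}\|$ along the word itself. Quasi-multiplicativity does not give this. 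It only bounds the \emph{bridged} product $\|\mathbf{M}_{\underline\theta}\mathbf{M}_{\underline\omega}\mathbf{M}_{\underline\eta}\|$ from below. For non-commuting allowable matrices, lower bounds on prefixes give no upper bound on suffixes.

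The estimate that does hold is in terms of suffixes. Every parent has at most $M$ children, so pairs whose last common ancestor is in generation $j$ contribute at most $M\|\mathbf{M}_{\theta_{j+2}\cdots\theta_k}\|\,\mathbb{E}\|Z_k\|$. Hence
\[
\mathbb{P}(Z_k\neq 0)\ \ge\ \frac{(\mathbb{E}\|Z_k\|)^2}{\mathbb{E}\|Z_k\|^2}\ \ge\ \frac{\mathbb{E}\|Z_k\|}{1+M\sum_{j=1}^{k}\|\mathbf{M}_{\theta_{j+1}\cdots\theta_k}\|},
\]
with the empty product read as the identity. So define $G_k$ as follows:
\begin{itemize}
\item impose the suffix bounds $\|\mathbf{M}_{\theta_{j+1}\cdots\theta_k}\|\le e^{\varepsilon k}\max(1,\|\mathbf{M}_{\underline\theta}\|)$ for all $j$;
\item make each good word begin with the positive word from (P2).
\end{itemize}
Then $\mathbb{E}\|Z_k\|$, the row sum of $\mathbf{M}_{\underline\theta}$ at the starting type, is $\gtrsim\|\mathbf{M}_{\underline\theta}\|$ uniformly in that type. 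You need this uniformity anyway, because your embedded process is really multitype: each selected cylinder carries a type. Alternatively, always select a fixed type.

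The suffix bounds are $\mu_{t^*}$-typical by shift invariance and the Gibbs large-deviation bound
\[
\mu_t\big(\|\mathbf{M}_{\underline\theta|_m}\|>e^{m(P'(t)+\varepsilon)}\big)\lesssim e^{m(P_m(t+s)-P(t)-sP'(t)-s\varepsilon)},\qquad 0<s\ \text{small}.
\]
This bound also holds at $t=0$, with $\mu_0$ uniform and $P'(0)$ replaced by $\lambda$. Combine it with a union bound over $m\ge\varepsilon k$; shorter suffixes are trivially $e^{O(\varepsilon k)}$. No ballot lemma is needed.

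With these changes, $\#G_k\min(1,\|\mathbf{M}_{\underline\theta}\|)\ge e^{k(P(t^*)-O(\varepsilon))}$ in every case, and the rest of your argument goes through. Compared with the paper, this gives a self-contained lower bound that avoids Theorem~\ref{measureofattr_theorem} and the perturbation step. The price is that it says nothing about positivity of measures.
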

Recall that non-extinction is the event $\Lambda\neq \emptyset$. This happens with positive probability provided that $\sum_h p_h \#h = \mathbb{E}(\#\mathcal{E}_1)>1$, which we always assume for the rest of the note.

\begin{application}[Application to Example \ref{ex:1d}]
    We consider the coin-tossing randomization, the retention probability is $p$. The right angle Sierpi\'nski carpet as well as its projections are non-empty iff $p>1/3$, and its dimension is almost surely $\log_2(3p)$ (on non-extinction).
Consider $\proj: \R^2\to\R$,
        $\proj(x,y)\coloneqq-x+y$,
    the rescaled version of the $45$-degree projection. Then the rescaled projected IFS  (see Figure \ref{fig:IFS}) is
    $\mathcal{S}=\left\{ S_i(x)=\frac{1}{2}x+i \right\}_{i=0}^2 $. In this case $L=N=2$. The types are determined by the basic cubes,
    $J^{(1)}\coloneqq\left[ 0,2 \right]$ and $J^{(2)}\coloneqq[2,4]$.

The corresponding matrices are:

\[
        \mathbf{B}_0=
        \begin{bmatrix}
        1 & 0 \\
        1 & 1  \\

        \end{bmatrix},\quad
        \mathbf{B}_1=
        \begin{bmatrix}
        1 & 1 \\
        0 & 1  \\

        \end{bmatrix}.
\]

Its dimension theoretic properties are collected here:
\begin{itemize}
    \item It follows from the work of Pollicott and Vytnova\footnote{This is based on personal communication.}, that for $p\leq 0.672$ the Lebesgue measure of the random attractor is almost surely $0$, however for $p\geq 0.673$ the Lebesgue measure is positive almost surely conditioned on non-extinction. From this it follows that if $p_1$ denotes the critical probability so that for all $p\geq p_1$ $\dim_H \Lambda_p=1$, then $0.672\leq p_1\leq 0.673$.
    \item It follows from \cite{zbMATH06379809} together with Theorem \ref{thm:dim} that there exists a parameter interval where $\dimH \Lambda_p<\dimS \Lambda_p =\log(3p)/\log(2)<1$. Here $\dimS$ stands for the expected similarity dimension, see \eqref{dimsim}.
\end{itemize}
\end{application}

\begin{application}[Application on Example \ref{ex:2d}]
    In this case $N=4$, and the corresponding matrices are:
    \begin{align*}
        \mathbf{B}_0=
            \begin{bmatrix}
             1 & 0 & 0 & 0 \\
             1 & 1 & 0 & 0 \\
             1 & 0 & 1 & 0 \\
             1 & 1 & 1 & 1 \\
            \end{bmatrix},\,
        \mathbf{B}_1=
                \begin{bmatrix}
                 1 & 1 & 0 & 0 \\
                 0 & 1 & 0 & 0 \\
                 1 & 1 & 1 & 1 \\
                 0 & 1 & 0 & 1 \\
                \end{bmatrix},\, \\
        \mathbf{B}_2=
                \begin{bmatrix}
                    1 & 0 & 1 & 0 \\
                    1 & 1 & 1 & 1 \\
                    0 & 0 & 1 & 0 \\
                    0 & 0 & 1 & 1 \\
                   \end{bmatrix},\,
                   \mathbf{B}_3=
                   \begin{bmatrix}
                    1 & 1 & 1 & 1 \\
                    0 & 1 & 0 & 1 \\
                    0 & 0 & 1 & 1 \\
                    0 & 0 & 0 & 1 \\
                   \end{bmatrix}.
    \end{align*}
    Its dimension theoretic properties are as follows:
    \begin{itemize}
        \item When $p>0.7712$ then the set has positive two-dimensional Lebesgue measure almost surely conditioned on non-extinction (this is discussed in \cite{zbMATH08181170}).

    \end{itemize}
\end{application}

\section{Dimension theory of random self-similar sets, interpretation of the formula}
\subsection{Historical overview}
We begin this section with a brief historical overview of the dimension theory of certain statistically self-similar sets. 

When analyzing rational projections or slices of sponges, it is natural to subdivide the cylinders into shapes and use matrices to track their evolution inside columns. For instance, Manning and Simon \cite{zbMATH06152557} considered rational slices of the Sierpi\'nski carpet. Slices of more general $2$-dimensional carpets appeared in \cite{zbMATH06379809}; the authors used different matrices to encode the evolution of shapes. In this note we follow the general framework used by Ruiz (see \cite{zbMATH05549434}) to encode the growth of shapes.
In the random setting shapes appeared in \cite{zbMATH05255663,zbMATH05643268}, when studying differences of random Cantor sets. 

Our starting point is the dimension formula for boundaries of Galton--Watson trees, established by Hawkes in 1981 \cite{zbMATH03734900}.
His result can also be used to determine the dimension of certain statistically self-similar sets obtained by projecting the Galton--Watson tree to the attractor of a self-similar IFS, as described in Section~\ref{a59}.
In order for this result to be applicable, the IFS has to be homogeneous and has to satisfy a relatively strong separation condition. 
From the perspective of this note, the most important family of IFSs that satisfy these is the one defining a $d$-dimensional carpet.
Let $Z$ denote the offspring random variable of the Galton--Watson tree.
Then for a random carpet with contraction $1/L$,
\begin{equation}
    \dimH \Lambda = \frac{\log(\mathbb{E}[Z])}{\log(L)}
\end{equation}
almost surely conditioned on non-extinction. 
We note that the dimension formula follows from the formula in more general constructions, such as the ones appearing in the work of Falconer \cite{zbMATH04011530} or of Mauldin and Williams \cite{mauldin1986random}.
For an exposition, see \cite[Chapter 15]{zbMATH06653785}. 

In \cite{zbMATH04115622,zbMATH04022321}, the authors further permit the following: for the deterministic skeleton, two first level cylinders are either identical or their interior is disjoint.
These IFSs can be viewed as projections of carpets onto coordinate subspaces.
The dimension formula is already much more complicated in these cases. 

Consider a $d$-dimensional carpet with contraction ratio $1/L$, and let $\proj$ denote the projection to the $k$-dimensional subspace spanned by the first $k$ coordinate vectors.
The projected set is contained in $\bigcup_{i=0}^{L^k-1} D_i$, where each $D_i$ is a closed $L$-adic cube in dimension $k$.
The ``expected column sums'' are given by $m_i\coloneqq\mathbb{E}(\#\mathcal{E}^i_1)$, where $\mathcal{E}^i_1=\{j\in \mathcal{E}_1: \proj(S_j([0, 1]^d))=D_i\}$, for $i\in \{0, \dots, L^k-1\}$. 

For simplicity, we assume that $k=1$ and $m_i>0$ for all $i\in [L]_0$.
In this case, the formula for the almost sure dimension is given by
\begin{equation*}\label{x111}
    \dimH(\proj (\Lambda))=\inf_{t\in[0,1]}\frac{\log(\sum_{i=0}^{L-1} m_i^t)}{\log (L)}.
\end{equation*}

We assume that not all the $m_i$ are equal.
In this case, the function $\phi(t)=\sum_{i=0}^{L-1} m_i^t$ is strictly convex and continuous.
It has a unique minimum somewhere in $[-\infty,\infty]$.
If the minimum is attained in $[-\infty, 0]$, the dimension is the ambient dimension (in this case $1$).
If it is attained in $[1, \infty]$, the almost sure dimension is the dimension of the original set.
Otherwise, the minimum is strictly less than $\phi(1)$; hence the dimension drops. 

It is clear now that the dimension formula \eqref{a42} given in terms of the pressure function (see \eqref{y99}) is the matrix analogue of \eqref{x111}. The pressure function is the matrix analogue of $\log(\sum_{i=0}^{L-1} m_i^t)$. We remark that the reason why we need to use matrix analogues is because of the more complicated overlapping structure appearing in these cases of IFSs.

\section{Preliminaries}\label{a46}
For non-negative numbers $x$ and $y$, we write $x \lesssim_{a,b} y$, $x \gtrsim_{a,b} y$ 
if there exists a constant $C>0$, depending only on the parameters $a,b$, such that
$x \leq C\,y$, $x \geq C\,y$ respectively. We write $x \approx_{a,b} y$ if both
$x \lesssim_{a,b} y$ and $x \gtrsim_{a,b} y$ hold. 
Let $Z$ be the offspring random variable of the underlying Galton--Watson tree.
In what follows, the quantity
\begin{equation}\label{dimsim}
    \dimS(\Lambda)=\frac{\log(\mathbb{E}[Z])}{\log(L)}
\end{equation}
is called the \emph{expected similarity dimension} or the \emph{similarity dimension of the random system}.
Throughout this section, we use the metric $d(\mathbf{i}, \mathbf{i})=0$ and $d(\mathbf{i},\mathbf{j})=L^{-|\mathbf{i}\wedge \mathbf{j}|}$ for $\mathbf{i}\neq\mathbf{j}$ on the symbolic space $\Sigma^{[L^d]_0}$.
Here $|\mathbf{i}\wedge \mathbf{j}|$ is the length of the common prefix of $\mathbf{i}$ and $\mathbf{j}$, i.e.\ the largest $n\geq 0$ such that $i_k=j_k$ for all $k\leq n$.
The relevant properties of the pressure function $P(t)$ are collected by Bárány and Rams in \cite{zbMATH06379809}, and were originally proven by Feng in \cite{Feng_2007} and Feng and Lau in \cite{zbMATH01804061}.
\begin{lemma}\label{b88}
    The key properties for us are the following:
\begin{itemize}
    \item The limit exists for all $t$.
    \item The pressure function is convex for all $t$.
    \item The pressure function is continuously differentiable for all $t>0$.
\end{itemize}
\end{lemma}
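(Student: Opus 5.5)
The plan is to prove the three properties in turn. Existence and convexity are elementary; only differentiability needs the positivity structure of Assumption~\ref{ass:pos}. Write
\[
Z_n(t)\coloneqq\sum_{\underline{\theta}\in[L^d]_0^n}\|\mathbf{M}_{\underline{\theta}}\|^t .
\]
By (P1) every product $\mathbf{M}_{\underline{\theta}}$ is allowable, hence nonzero, so $Z_n(t)\in(0,\infty)$ for every real $t$.

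\emph{Existence of the limit.} The $1$-norm of non-negative matrices is submultiplicative, $\|\mathbf{M}_{\underline{\theta}\underline{\eta}}\|\le\|\mathbf{M}_{\underline{\theta}}\|\,\|\mathbf{M}_{\underline{\eta}}\|$, so for $t\ge 0$ we get $Z_{n+m}(t)\le Z_n(t)Z_m(t)$. Fekete's lemma applied to $\log Z_n(t)$ then gives existence of the limit, which equals $\inf_n \frac1n\log Z_n(t)$.

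For $t<0$ I need the reverse inequality $\|\mathbf{M}_{\underline{\theta}\underline{\eta}}\|\ge c\,\|\mathbf{M}_{\underline{\theta}}\|\,\|\mathbf{M}_{\underline{\eta}}\|$. For non-negative allowable matrices a weak version follows by comparing each entry with the minimum positive entry of the letters. In general, though, I would take it from the quasi-multiplicativity discussed below. With it, $c^{t}\,Z_n Z_m\le Z_{n+m}$-type bounds hold with constants, and Fekete applies to $\log(C Z_n)$. In either sign regime, quasi-multiplicativity gives two-sided almost-(sub/super)additivity, so the limit exists for all $t$.

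\emph{Convexity.} For fixed $n$, the function $t\mapsto\log Z_n(t)$ is a log-sum-exp of the linear functions $t\log\|\mathbf{M}_{\underline{\theta}}\|$. It is therefore convex, by Hölder's inequality $Z_n(\lambda t+(1-\lambda)s)\le Z_n(t)^\lambda Z_n(s)^{1-\lambda}$. A pointwise limit of convex functions is convex, so $P$ is convex.

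\emph{Differentiability for $t>0$.} This is the main step, and I would follow Feng's argument.

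The key input is quasi-multiplicativity: there exist $C>0$ and a finite set $\Gamma$ of words such that for all $\underline{\theta},\underline{\eta}$ there is some $\underline{\gamma}\in\Gamma$ with
\[
\|\mathbf{M}_{\underline{\theta}\underline{\gamma}\underline{\eta}}\|\ge C\,\|\mathbf{M}_{\underline{\theta}}\|\,\|\mathbf{M}_{\underline{\eta}}\|.
\]
In fact $\Gamma=\{\underline{\gamma}_0\}$ suffices, where $\underline{\gamma}_0=\theta_1\cdots\theta_n$ is the strictly positive product from (P2). Indeed, write $\mathbf{M}_{\underline{\gamma}_0}\ge \delta\,\mathbf{1}\mathbf{1}^{T}$ with $\delta>0$. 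For non-negative $\mathbf{A},\mathbf{B}$ this gives $\|\mathbf{A}\mathbf{M}_{\underline{\gamma}_0}\mathbf{B}\|\ge\delta\,\|\mathbf{A}\|\,\|\mathbf{B}\|$, since $\mathbf{1}^T\mathbf{A}\mathbf{1}=\|\mathbf{A}\|$.

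Combined with submultiplicativity, this makes the potential $\mathbf{i}\mapsto t\log\|\mathbf{M}_{\mathbf{i}|_n}\|$ on $\Sigma^{[L^d]_0}$ almost additive up to a bounded gap. Feng's theory then yields a unique equilibrium state $\mu_t$ for each $t>0$, and it has the Gibbs property
\[
\mu_t([\underline{\theta}])\approx \|\mathbf{M}_{\underline{\theta}}\|^t\,e^{-nP(t)}.
\]
The Lyapunov exponent $\chi(\mu)=\lim_n\frac1n\int\log\|\mathbf{M}_{\mathbf{i}|_n}\|\,d\mu$ makes $P$ the Legendre-type supremum $P(t)=\sup_\mu\{h_\mu+t\chi(\mu)\}$ over invariant measures. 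Every element of the subdifferential $\partial P(t)$ is realized as $\chi(\nu)$ for some equilibrium state $\nu$ at $t$, by taking weak-$*$ limits of measures attaining near-sup values at $t\pm\varepsilon$ and using upper semicontinuity of entropy and of $\chi$. Uniqueness of the equilibrium state therefore forces $\partial P(t)=\{\chi(\mu_t)\}$, so $P$ is differentiable at $t$. A convex differentiable function on an open interval is automatically $C^1$, which gives the third item.

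I expect the obstacle to be precisely this uniqueness and subdifferential step. If it gets delicate I would simply invoke Feng~\cite{Feng_2007} and Feng--Lau~\cite{zbMATH01804061}, whose hypotheses are exactly the quasi-multiplicativity verified above.
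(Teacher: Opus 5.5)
Your overall route matches the paper's. The paper gives no argument of its own; it cites Feng and Feng--Lau, as collected by B\'ar\'any--Rams. Your ingredients are the standard ones behind those citations: submultiplicativity plus Fekete for existence, H\"older for convexity, and quasi-multiplicativity from the positive word in (P2) feeding into Feng's uniqueness of equilibrium states for $t>0$. Your check $\|\mathbf{A}\mathbf{M}_{\underline{\gamma}_0}\mathbf{B}\|\ge\delta\|\mathbf{A}\|\,\|\mathbf{B}\|$ and your subdifferential argument are fine.

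The step that fails is the existence of the limit for $t<0$. You say you need $\|\mathbf{M}_{\underline{\theta}\underline{\eta}}\|\ge c\,\|\mathbf{M}_{\underline{\theta}}\|\,\|\mathbf{M}_{\underline{\eta}}\|$ and would get it from quasi-multiplicativity. But quasi-multiplicativity only gives this with the connecting word $\underline{\gamma}_0$ inserted. Without that word the inequality is false, already in Example~\ref{ex:1d}: there $\mathbf{B}_0^n=\begin{bmatrix}1&0\\n&1\end{bmatrix}$, so $\|\mathbf{M}_{0^n}\|=p^n(n+2)$ and
\[
\frac{\|\mathbf{M}_{0^{n+m}}\|}{\|\mathbf{M}_{0^n}\|\,\|\mathbf{M}_{0^m}\|}=\frac{n+m+2}{(n+2)(m+2)}\longrightarrow 0 .
\]
The same example shows that $t\log\|\mathbf{M}_{\mathbf{i}|_n}\|$ is not almost additive, contrary to your wording. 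So the Feng result you invoke must be the quasi-multiplicative version, not the almost-additive formalism.

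The reverse inequality is not actually needed. For $t<0$ the map $x\mapsto x^t$ is decreasing, so submultiplicativity itself gives $Z_{n+m}(t)\ge Z_n(t)Z_m(t)$. The superadditive form of Fekete's lemma then gives existence of the limit, equal to $\sup_n\frac1n\log Z_n(t)$.

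You still need finiteness, and you need it for $t\ge0$ too, since the subadditive Fekete lemma alone allows $-\infty$. Use (P1): let $m>0$ be the smallest positive entry among the $\mathbf{M}_\theta$. Then every row sum of every $\mathbf{M}_\theta$ is at least $m$, so $\|\mathbf{A}\mathbf{M}_\theta\|\ge m\|\mathbf{A}\|$ for non-negative $\mathbf{A}$. Hence $m^n\le\|\mathbf{M}_{\underline{\theta}}\|\le(\max_\theta\|\mathbf{M}_\theta\|)^n$, and $\frac1n\log Z_n(t)$ is bounded for each fixed $t$. With this repair the existence part holds for all real $t$, and the rest of your proposal stands.
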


\begin{remark}
   Note that in \cite{zbMATH06379809} the pressure function is normalized by dividing by the constant $\log(L)$.
   We omit this normalization. 
\end{remark}
Another important quantity is the Lyapunov exponent.
\begin{definition}\label{def:lyap}
For an ergodic measure $\nu$ on $\Sigma^{[L^d]_0}$ the \emph{Lyapunov exponent} of $\mathcal{M}$ with respect to $\nu$ is
\begin{equation*}
    \lambda_\nu(\mathcal{M})\coloneqq\lim_{n\to\infty}\frac{1}{n}\log\|\mathbf{M}_{\pmb\theta|_n}\| \quad\text{for $\nu$-almost every } \pmb\theta\in\Sigma^{[L^d]_0}.
\end{equation*}
By (P1) the products $\mathbf{M}_{\pmb\theta|_n}$ are non-zero, and since $\|\cdot\|$ is submultiplicative on non-negative matrices, the limit exists and is $\nu$-almost surely constant by Kingman's subadditive ergodic theorem. When the ergodic measure is the uniform measure, we simply write $\lambda$ for the Lyapunov exponent.
\end{definition}
\begin{lemma}[{\cite[proof of Lemma 4.9]{zbMATH06379809}}]\label{x46}
    The pressure satisfies $\lim_{t\to 0+}P'(t)=\lambda$.
\end{lemma}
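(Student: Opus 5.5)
Since $P$ is convex on $[0,\infty)$ and continuously differentiable on $(0,\infty)$ (Lemma~\ref{b88}), the limit $\lim_{t\to0+}P'(t)$ exists and equals the right derivative $D^+P(0)=\lim_{t\to0+}\frac{P(t)-P(0)}{t}$. I will therefore identify $D^+P(0)$ with $\lambda$ by proving two matching inequalities. Note that $P(0)=\log(L^d)$, since there are exactly $L^{dn}$ words of length $n$.

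\emph{Lower bound.} Let $\mu$ denote the uniform Bernoulli measure on $\Sigma^{[L^d]_0}$. Write
\[
\tfrac1n\log\sum_{\underline\theta\in[L^d]_0^n}\|\mathbf M_{\underline\theta}\|^t=\log L^d+\tfrac1n\log \mathbb E_\mu\big[e^{t\log\|\mathbf M_{\pmb\theta|_n}\|}\big].
\]
By Jensen's inequality this is at least $\log L^d+\tfrac tn\mathbb E_\mu\log\|\mathbf M_{\pmb\theta|_n}\|$. To pass to the limit I need $\log\|\mathbf M_{\pmb\theta|_n}\|$ to be integrable with linear bounds.
\begin{itemize}
\item The upper bound $\log\|\mathbf M_{\pmb\theta|_n}\|\le n\log C$ follows from submultiplicativity.
\item For the lower bound, each $\mathbf M_\theta$ is allowable by (P1), so the vector $\mathbf M_{\theta_1}\cdots\mathbf M_{\theta_n}\mathbf 1$ has all entries at least $\delta^n$, where $\delta$ is the smallest positive entry among the $\mathbf M_\theta$. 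This gives $\log\|\mathbf M_{\pmb\theta|_n}\|\ge n\log\delta$.
\end{itemize}
Kingman's theorem then gives $\frac1n\mathbb E_\mu\log\|\mathbf M_{\pmb\theta|_n}\|\to\lambda$. Hence $P(t)\ge P(0)+t\lambda$ for all $t>0$, and so $D^+P(0)\ge\lambda$.

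\emph{Upper bound.} Here I would invoke the variational principle for the subadditive potential $\log\|\mathbf M_{\underline\theta}\|$ (Cao--Feng--Huang, or Feng's work cited above):
\[
P(t)=\sup_\nu\{h_\nu+t\lambda_\nu(\mathcal M)\},
\]
where the supremum is over invariant measures $\nu$ and is attained by some equilibrium measure $\nu_t$, since entropy is upper semicontinuous for the full shift. Because $h_{\nu_t}\le\log L^d$, we get $\frac{P(t)-P(0)}{t}\le\lambda_{\nu_t}$. It therefore suffices to show $\limsup_{t\to0+}\lambda_{\nu_t}\le\lambda$.

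Since $\lambda_{\nu_t}\ge\log\delta$ is bounded below, $h_{\nu_t}=P(t)-t\lambda_{\nu_t}\ge P(t)-t\log C\to\log L^d$. Take any weak-$*$ limit point $\nu_0$ of $\nu_t$ along $t\to0+$. By upper semicontinuity of entropy, $h_{\nu_0}=\log L^d$. The uniform measure is the unique measure of maximal entropy, so $\nu_0=\mu$, and the whole family converges: $\nu_t\to\mu$.

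Next, $\nu\mapsto\lambda_\nu=\inf_n\frac1n\int\log\|\mathbf M_{\pmb\theta|_n}\|\,d\nu$ is an infimum of continuous functions, because $\log\|\mathbf M_{\pmb\theta|_n}\|$ is locally constant and bounded. It is therefore upper semicontinuous, and $\limsup_{t\to 0+}\lambda_{\nu_t}\le\lambda_\mu=\lambda$. Combined with the lower bound, this gives $D^+P(0)=\lambda$, which is the claim.

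The main obstacle is this upper bound: it requires the subadditive variational principle, the existence of equilibrium states, and the two semicontinuity facts. If a more hands-on route is preferred, I would instead bound $\frac1n\log\mathbb E_\mu e^{tX_n}$ from above by $t\frac1n\mathbb E_\mu X_n+O(t^2)$, where $X_n=\log\|\mathbf M_{\pmb\theta|_n}\|$. This needs a uniform-in-$n$ control of the normalised cumulant generating function of $X_n$, such as a linear variance bound coming from the quasi-multiplicativity that (P2) provides. I expect that estimate to be the delicate point in such an alternative proof.
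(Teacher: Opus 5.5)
Your proof is correct, and it is genuinely different from what the paper does: the paper gives no argument of its own and simply cites the proof of Lemma~4.9 of B\'ar\'any--Rams, which rests on the Feng and Feng--Lau thermodynamic formalism.

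Your route is self-contained. For $P(t)\ge P(0)+t\lambda$ you use Jensen and Kingman under the uniform measure. For the reverse inequality you use equilibrium states $\nu_t$ with $h_{\nu_t}\to\log L^d$, uniqueness of the measure of maximal entropy, and upper semicontinuity of $\nu\mapsto\lambda_\nu$. Convexity, which also gives continuity of $P$ at $0$, then identifies $D^+P(0)$ with $\lim_{t\to0+}P'(t)$.

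There is one slip. The estimate $h_{\nu_t}=P(t)-t\lambda_{\nu_t}\ge P(t)-t\log C$ uses the upper bound $\lambda_{\nu_t}\le\log C$, which comes from submultiplicativity. It does not use the lower bound $\lambda_{\nu_t}\ge\log\delta$ that you quote.

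Within this paper you could also skip the Cao--Feng--Huang variational principle. Lemma~\ref{gibbsm} already gives ergodic Gibbs measures $\mu_t$ with $\lambda_{\mu_t}=P'(t)$. Since $\dimH\mu_t=h_{\mu_t}/\log L$ in the symbolic metric, it also gives $h_{\mu_t}=P(t)-tP'(t)$. Running your entropy/semicontinuity argument with $\nu_t=\mu_t$ then gives $\limsup_{t\to0+}P'(t)\le\lambda$ directly. The other direction, $P'(t)\ge (P(t)-P(0))/t\ge\lambda$, is your Jensen bound plus convexity.

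What the citation buys is brevity. What your route buys is transparency about hypotheses. Your argument shows that $D^+P(0)=\lambda$ needs only (P1), which keeps all exponents finite. (P2) enters only through the differentiability of $P$ on $(0,\infty)$, and that is needed only to phrase the statement via $P'(t)$ rather than right derivatives. The cumulant-bound alternative in your last paragraph is not needed.
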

Throughout this section, we will require the following key lemma which was originally proved in \cite{zbMATH01804061} by Feng and Lau.
The lemma appeared in a similar form in \cite{zbMATH06379809} without proof. 

\begin{lemma}[{\cite[Lemma 4.5]{zbMATH06379809}}]\label{gibbsm}
    For every $t>0$, there is a unique ergodic, left-shift-invariant Gibbs measure $\mu_t$ on $\Sigma^{[L^d]_0}$ with the following properties: for any $\pmb{\theta}=\left(\theta_1, \theta_2,\dots \right) \in \Sigma^{[L^d]_0}$, $n\in \N$:
    \begin{equation}
        \mu_t\left(\left[\pmb{\theta}|_n\right]\right) \approx_{t, \mathcal{S}} \|\mathbf{M}_{\pmb{\theta}|_n}\|^t \exp(-n P(t)),
    \end{equation}

    \begin{equation}
        \operatorname{dim}_H \mu_t=\frac{-t P^{\prime}(t)+P(t)}{\log(L)},
    \end{equation}
    and
    \begin{equation}\label{a24}
        \lambda_{\mu_t}(\mathcal{M})=P^{\prime}(t).
    \end{equation}
\end{lemma}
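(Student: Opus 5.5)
The plan is to build $\mu_t$ from the weights $\|\mathbf{M}_{\underline\theta}\|^t$, in the spirit of Feng and Lau \cite{zbMATH01804061}. Everything rests on \emph{quasi-multiplicativity} of the norm, which I expect to be the main obstacle.

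\textbf{Step 1 (quasi-multiplicativity).} Because the entries are non-negative, the $1$-norm is submultiplicative: $\|\mathbf{M}_{\underline u\underline v}\|\le\|\mathbf{M}_{\underline u}\|\,\|\mathbf{M}_{\underline v}\|$. For the reverse inequality, let $\underline w=\theta_1\dots\theta_n$ be the word from (P2), so that $\mathbf{M}_{\underline w}$ is strictly positive. Set $c=\min_{a,b}\mathbf{M}_{\underline w}(a,b)>0$. Then
\[
\|\mathbf{M}_{\underline u}\mathbf{M}_{\underline w}\mathbf{M}_{\underline v}\|
=\sum_{a,b,i,j}\mathbf{M}_{\underline u}(i,a)\,\mathbf{M}_{\underline w}(a,b)\,\mathbf{M}_{\underline v}(b,j)
\ge c\,\|\mathbf{M}_{\underline u}\|\,\|\mathbf{M}_{\underline v}\|.
\]
Allowability (P1) guarantees that every product $\mathbf{M}_{\underline\theta}$ is allowable, hence non-zero. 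It also gives $\|\mathbf{M}_{\underline u\theta}\|\ge \delta\|\mathbf{M}_{\underline u}\|$ with $\delta>0$ depending only on the finitely many $\mathbf{M}_\theta$.

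From these inequalities the partition functions $Z_n(t)=\sum_{|\underline\theta|=n}\|\mathbf{M}_{\underline\theta}\|^t$ satisfy $Z_{n+m}\le Z_nZ_m$ and $Z_{n+m+|\underline w|}\gtrsim Z_nZ_m$. Fekete's lemma applied to both then gives $Z_n\approx_t e^{nP(t)}$, which also proves that the limit in \eqref{y99} exists.

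\textbf{Step 2 (construction, Gibbs property, ergodicity, uniqueness).}
\begin{itemize}
\item \emph{Construction.} Let $\nu_n$ give each cylinder $[\underline\theta]$ with $|\underline\theta|=n$ the mass $\|\mathbf{M}_{\underline\theta}\|^t/Z_n$ (extended arbitrarily beyond length $n$). Take a weak-$*$ limit of the Cesàro averages $\frac1n\sum_{k<n}\nu_n\circ\sigma^{-k}$; the limit $\mu_t$ is shift-invariant.
\item \emph{Gibbs property.} Using Step 1 and $Z_n\approx e^{nP}$, each $\nu_n\circ\sigma^{-k}$ already satisfies the two-sided bound $\asymp\|\mathbf{M}_{\underline\theta}\|^te^{-|\underline\theta|P(t)}$ on cylinders, uniformly in $n$ and $k$. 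The bound passes to the limit because cylinders are clopen.
\item \emph{Ergodicity.} The Gibbs bounds together with quasi-multiplicativity give
\[
\sum_{|\underline w'|=|\underline w|}\mu_t([\underline u]\cap\sigma^{-|\underline u|-|\underline w|}[\underline v])\gtrsim\mu_t[\underline u]\,\mu_t[\underline v].
\]
This quasi-Bernoulli-type lower bound yields $\mu_t(A)\mu_t(B)\lesssim\liminf_n\frac1n\sum_{k<n}\mu_t(A\cap\sigma^{-k}B)$ for all Borel $A,B$. Taking $A$ invariant and $B=A^c$ forces $\mu_t(A)\in\{0,1\}$.
\item \emph{Uniqueness.} Any two invariant measures with the Gibbs property are mutually boundedly absolutely continuous on cylinders, hence on all Borel sets. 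Two ergodic measures that are equivalent must coincide.
\end{itemize}

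\textbf{Step 3 (dimension and Lyapunov exponent).} For $\mu_t$-a.e.\ $\pmb\theta$, the Gibbs property and Definition~\ref{def:lyap} (Kingman) give
\[
-\tfrac1n\log\mu_t[\pmb\theta|_n]\to P(t)-t\lambda_{\mu_t}.
\]
\begin{itemize}
\item \emph{Dimension.} The ball of radius $L^{-n}$ in our metric is exactly the cylinder of length $n$. So the local dimension is a.e.\ equal to $(P(t)-t\lambda_{\mu_t})/\log L$, and hence so is $\dimH\mu_t$.
\item \emph{Entropy.} By Shannon--McMillan--Breiman, the same computation identifies the entropy: $h(\mu_t)=P(t)-t\lambda_{\mu_t}$.
\item \emph{Tangent line bound.} Fix $s>0$ and $\varepsilon>0$. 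Let $G_n$ be the set of words of length $n$ with $\mu_t[\underline\theta]\le e^{-n(h-\varepsilon)}$ and $\|\mathbf{M}_{\underline\theta}\|\ge e^{n(\lambda_{\mu_t}-\varepsilon)}$. For large $n$ we have $\mu_t(G_n)\ge\frac12$, so $\#G_n\ge\frac12e^{n(h-\varepsilon)}$. Therefore $Z_n(s)\ge \frac12 e^{n(h-\varepsilon)}e^{ns(\lambda_{\mu_t}-\varepsilon)}$. Letting $n\to\infty$ and then $\varepsilon\to0$ gives $P(s)\ge P(t)+(s-t)\lambda_{\mu_t}$ for all $s>0$.
\item \emph{Conclusion.} The affine function $s\mapsto P(t)+(s-t)\lambda_{\mu_t}$ lies below $P$ on $(0,\infty)$ and touches it at the interior point $t$. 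Since $P$ is differentiable at $t>0$ (Lemma~\ref{b88}), its slope must equal the derivative: $\lambda_{\mu_t}=P'(t)$, which is \eqref{a24}. Substituting into the dimension formula gives $\dimH\mu_t=(P(t)-tP'(t))/\log L$.
\end{itemize}
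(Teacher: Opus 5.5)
Your proof is correct. The paper gives no argument of its own for this lemma; it only cites \cite[Lemma 4.5]{zbMATH06379809}, which goes back to Feng and Lau \cite{zbMATH01804061}. What you have written is essentially a reconstruction of that standard route:
\begin{itemize}
\item quasi-multiplicativity of the norm, from the strictly positive word in (P2);
\item a Gibbs measure obtained as a Ces\`aro limit;
\item ergodicity from the quasi-Bernoulli lower bound;
\item uniqueness, because equivalent ergodic measures coincide;
\item the Lyapunov identity from the supporting-line inequality $P(s)\ge h(\mu_t)+s\lambda_{\mu_t}$.
\end{itemize}

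The one point to be aware of is where the weight of the argument sits. You obtain $\lambda_{\mu_t}=P'(t)$ by feeding in the differentiability of $P$ from Lemma~\ref{b88}. In the Feng--Lau development, that differentiability is itself derived from $\mu_t$ being the unique equilibrium state. One takes weak-$*$ limits of $\mu_{t_k}$ as $t_k\to t^{\pm}$ and uses upper semicontinuity of entropy and of $\nu\mapsto\lambda_\nu$. Your supporting-line inequality supplies the remaining ingredient, namely that $\lambda_{\mu_{t_k}}$ is a subgradient of $P$ at $t_k$. Within this paper your use of Lemma~\ref{b88} is legitimate, since it is stated beforehand. However, your derivation of $\lambda_{\mu_t}=P'(t)$ is self-contained only modulo that input. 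To remove the dependence, you would add the uniqueness of the equilibrium state and the limiting argument above.

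There is also a minor technical slip in the Gibbs step. The two-sided bound for $\nu_n\circ\sigma^{-k}([\underline\theta])$ is not uniform over all $k<n$: it fails once $k>n-|\underline\theta|$, where the arbitrary extension enters. It suffices to prove it for $|\underline w|\le k\le n-|\underline\theta|-|\underline w|$. The remaining values of $k$ contribute $O(1/n)$ to the Ces\`aro average, so the bound still passes to $\mu_t$.
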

Recall that $\lambda_{\mu_t}(\mathcal M)$ is as in Definition~\ref{def:lyap} the \emph{Lyapunov exponent} with respect to the ergodic measure $\mu_t$.

Another important value (other than $\lambda$) is $\eta\coloneqq P'(1)$. From convexity and Lemma \ref{x46}, it follows that $\lambda\leq \eta$, and therefore we have the following ``Phases'':
\newline 
\begin{enumerate*}[label=(PH\arabic*), itemjoin={\quad}]
    \item $\lambda>0$,\label{ph1}
    \item $\lambda=0$, $\eta>0$,\label{ph2}
    \item $\lambda=0$, $\eta=0$,\label{ph3}
    \item $\lambda<0$, $\eta>0$,\label{ph4}
    \item $\lambda<0$, $\eta=0$,\label{ph5}
    \item $\lambda<0$, $\eta<0$.\label{ph6}
\end{enumerate*}
\newline
We note that for Phase \ref{ph1} the random attractor has positive Lebesgue measure, consequently, its dimension equals the ambient dimension.
\section{The measure of the random attractor}
The proof for certain phases depend on the dimension of certain measures. For this reason we cite our earlier theorem on the positivity of measures of the attractor.

Before we state the theorem we introduce an additional regularity condition, to treat the critical case.

For $n\in\N$, $\underline\theta\in[L^d]_0^n$ and $v\in[N]_1$ let $\underline{Y}(\underline{\theta}, v)=(Y(\underline{\theta}, v)(w))_{w\in [N]_1}$ be a vector-valued random variable such that $Y(\underline\theta, v)(w)$ has the same distribution as $\#\{\underline i\in \mathcal{E}_n: S_{\underline i}(J^{(w)})=J^{(v)}_{\underline\theta}\}$.

\begin{condition}\label{a41}
For a fixed ergodic measure $\nu$ on $\Sigma^{[L^d]_0}$ there exist $n\in\N$ and $\underline\theta\in [L^d]_0^n$ such that $\nu([\underline\theta])>0$ and for each $v\in[N]_1$
\begin{equation*}
    \mathbb{P}\big(\underline{Y}(\underline\theta, v)\in \{\underline{0},\underline e_1,\dots,\underline e_N\}\big)<1,
\end{equation*}
where $\underline e_j$ denotes the $j$-th unit vector of $\R^N$.
\end{condition}

\begin{remark}
    Condition~\ref{a41} is equivalent to a strong regularity property of multitype branching processes in random environments introduced by Tanny in \cite{zbMATH03738673}. 
\end{remark}
\begin{remark}\label{rem:cointossing-regular}
For coin-tossing systems, Condition~\ref{a41} holds for the uniform measure. Indeed, by Assumption~\ref{ass:pos} there exists a word $\underline\theta$ such that $\mathbf{B}_{\underline\theta}$ is strictly positive, hence every row sum of $\mathbf{B}_{\underline\theta}$ is at least $N\geq 2$.
\end{remark}
For $u\in[N]_1$ let $\Gamma_u\colon[0,1]^d\to J^{(u)}$, $\Gamma_u(\underline x)=L(\underline x+\underline b_u)$, and let $\Pi_L\colon\Sigma^{[L^d]_0}\to[0,1]^d$ be the $L$-adic coding map, consistent with the indexing of the cubes $J^{(u)}_{\underline\theta}$. For a measure $\nu$ on $\Sigma^{[L^d]_0}$ we write
\begin{equation*}
    \widetilde{\nu}\coloneqq\sum_{u=1}^{N}(\Gamma_u\circ\Pi_L)_*\nu .
\end{equation*}

Recall that $\lambda$ is the Lyapunov expoentn corresponding tot he matrices when $\nu$ is the uniform measure on $\Sigma^{[L^d]_0}$. In this case, equivalently,
\begin{equation*}
    \lambda=\lim_{n\to \infty} \frac{1}{n}\sum_{\underline{\theta}\in [L^d]_0^n} \frac{1}{L^{dn}}\log(\|\mathbf{M}_{\underline{\theta}}\|).
\end{equation*}
    \begin{theorem}[{\cite[Theorem 4.3]{zbMATH08181170}}]\label{measureofattr_theorem}
    Consider an ergodic measure $\nu$ on $\Sigma^{[L^d]_0}$. Assume that $\mathcal{M}$ satisfies (P1), and that (P2) holds with a word satisfying $\nu([\theta_1\dots\theta_n])>0$. Then:
    \begin{itemize}
        \item If $\lambda_{\nu}(\mathcal{M})>0$, then $\mathbb{P}(\widetilde{\nu}(\Lambda)>0)>0$. If moreover $\nu$ is the uniform measure or one of the Gibbs measures $\mu_t$ of Lemma~\ref{gibbsm}, then $\widetilde{\nu}(\Lambda)>0$ almost surely conditioned on non-extinction.
        \item If $\lambda_{\nu}(\mathcal{M})<0$ then $\widetilde{\nu}(\Lambda)=0$ almost surely.
        \item If $\lambda_{\nu}(\mathcal{M})=0$ and the vector random variables $\{\underline{Y}(\theta, v)\}$ corresponding to the random attractor satisfy Condition~\ref{a41} then $\widetilde{\nu}(\Lambda)=0$ almost surely.
    \end{itemize}
\end{theorem}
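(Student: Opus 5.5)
The plan is to reduce the statement to the survival problem for a multitype branching process in a random environment (MBPRE) and then invoke the classical theory of Tanny \cite{zbMATH03738673}.

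\textbf{Step 1: the MBPRE.} Fix a type $u\in[N]_1$ and a point $\pmb\theta\in\Sigma^{[L^d]_0}$, and consider the nested cubes $J^{(u)}_{\pmb\theta|_n}$. For each $n$ define the random vector
\begin{equation*}
Z_n^{(u)}(\pmb\theta)(w)\coloneqq\#\{\underline i\in\mathcal{E}_n: S_{\underline i}(J^{(w)})=J^{(u)}_{\pmb\theta|_n}\},\qquad w\in[N]_1 .
\end{equation*}
By Fact~\ref{a96}, every retained level-$(n+1)$ cylinder meeting $J^{(u)}_{\pmb\theta|_{n+1}}$ in this way descends from a level-$n$ cylinder $S_{\underline i}(J^{(w)})=J^{(u)}_{\pmb\theta|_n}$. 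The $\theta_{n+1}$-subcube of $S_{\underline i}(J^{(w)})$ is the image of the $\theta_{n+1}$-subcube of $J^{(w)}$. The children of that node are governed by the independent labels $\underline{W}^{[\underline i]}$, and they contribute $Y(\theta_{n+1},w)$ offspring with the correct types. Hence $(Z_n^{(u)})_n$ is an MBPRE whose environment is the sequence $(\theta_n)$ and whose offspring law at step $n$ is that of $\underline Y(\theta_n,\cdot)$. The conditional mean is $\mathbb{E}[Z_n^{(u)}]=e_u^{T}\mathbf{M}_{\pmb\theta|_n}$ by the multiplicativity Fact for the expectation matrices.

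Next I would check that $\Gamma_u\Pi_L(\pmb\theta)\in\Lambda$ holds if and only if $Z_n^{(u)}(\pmb\theta)\neq\underline 0$ for all $n$, except possibly when the point lies on the boundary of some $L$-adic cube. Such boundary points must be handled separately. If $\nu$ is non-atomic they form a $\nu$-null set. Otherwise $\nu$ is carried by a periodic orbit, and a direct argument with neighbouring cubes is needed. Once this equivalence is in place, Fubini gives
\begin{equation*}
\mathbb{E}\,\widetilde\nu(\Lambda)=\sum_u\int \mathbb{P}\bigl(Z^{(u)}_n(\pmb\theta)\neq 0\ \forall n\bigr)\,d\nu(\pmb\theta).
\end{equation*}

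\textbf{Step 2: the three regimes.} If $\lambda_\nu(\mathcal M)<0$, then the first moment bound gives $\mathbb{E}|Z_n^{(u)}|\le\|\mathbf{M}_{\pmb\theta|_n}\|=e^{n(\lambda_\nu+o(1))}\to0$ for $\nu$-a.e.\ $\pmb\theta$, so the process dies out a.s. By the identity above, $\widetilde\nu(\Lambda)=0$ a.s. In the cases $\lambda_\nu>0$ and $\lambda_\nu=0$, I would invoke Tanny's theorem, whose hypotheses must be verified.
\begin{itemize}
\item Ergodicity of the environment is given.
\item Finiteness of the offspring numbers holds, since they are bounded by $M^n$.
\item Quenched positivity and allowability follow from (P1).
\item Eventual positivity of the products comes from (P2) with $\nu([\theta_1\dots\theta_n])>0$, combined with ergodicity: the positive block recurs infinitely often along $\nu$-a.e.\ $\pmb\theta$.
\end{itemize}
Tanny's theorem then gives survival with positive probability for $\nu$-a.e.\ environment when $\lambda_\nu>0$. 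Hence $\mathbb{E}\,\widetilde\nu(\Lambda)>0$, which yields $\mathbb{P}(\widetilde\nu(\Lambda)>0)>0$. In the critical case $\lambda_\nu=0$, Condition~\ref{a41} is exactly Tanny's strong regularity assumption, under which a.s.\ extinction follows.

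\textbf{Step 3: the zero--one law.} It remains to upgrade positivity to "a.s.\ on non-extinction" when $\nu$ is uniform or a Gibbs measure $\mu_t$. Both are quasi-Bernoulli: by Lemma~\ref{gibbsm}, $\nu([\underline\omega\underline\tau])\approx\nu([\underline\omega])\nu([\underline\tau])$, and the uniform measure is Bernoulli. Let $q=\mathbb{P}(\widetilde\nu(\Lambda)=0)$. Decompose $\Lambda=\bigcup_{i\in\mathcal{E}_1}S_i(\Lambda^{(i)})$, where the $\Lambda^{(i)}$ are independent copies of $\Lambda$. By Fact~\ref{a96}, $S_i$ maps each type cube onto a subcube of level one, so quasi-Bernoullicity gives $\widetilde\nu(S_iA)\gtrsim\widetilde\nu(A)$ for Borel $A$. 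Consequently $\widetilde\nu(\Lambda)=0$ forces $\widetilde\nu(\Lambda^{(i)})=0$ for every $i\in\mathcal{E}_1$, so $q\le f(q)$, where $f$ is the generating function of $\#\mathcal{E}_1$. Since $q<1$ by Step 2, and since $f(s)>s$ holds exactly on $[0,q_{\mathrm{ext}})$, where $q_{\mathrm{ext}}$ is the extinction probability, we get $q\le q_{\mathrm{ext}}$. Extinction trivially implies $\widetilde\nu(\Lambda)=0$, so $q\ge q_{\mathrm{ext}}$, and therefore $q=q_{\mathrm{ext}}$. This is the claimed statement.

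I expect the main obstacle to be Step 1's exact coupling. It requires checking that the type-wise counts really form a Markov branching structure with the right independence in the quenched setting, and that the boundary and atomic-$\nu$ issues do not break the equivalence between survival of the MBPRE and membership of the coded point in $\Lambda$. Matching the precise hypotheses of Tanny's critical-case theorem to Condition~\ref{a41} is the second delicate point.
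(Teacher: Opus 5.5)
The paper gives no proof of its own here; it quotes \cite[Theorem 4.3]{zbMATH08181170}. Your architecture matches the branching-process-in-random-environment approach signalled by the remark linking Condition~\ref{a41} to Tanny's strong regularity. That architecture consists of the type-counting process along $J^{(u)}_{\theta|_n}$ with mean $e_u^{T}\mathbf{M}_{\theta|_n}$, the first-moment bound for $\lambda_\nu(\mathcal{M})<0$, Tanny's theory for $\lambda_\nu(\mathcal{M})\ge 0$, and the generating-function $0$--$1$ argument. Steps 2 and 3 are sound as sketched. The gap is in Step 1, in the treatment of boundary points. It matters only for the second and third bullets, because the first bullet uses only the implication ``the process survives $\Rightarrow$ the coded point lies in $\Lambda$'', which holds for every coded point.

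Your claim that a non-atomic $\nu$ gives zero mass to codes of points on $L$-adic boundaries is false for $d\ge 2$. A Bernoulli measure on the letters with $\theta(1)=0$ is non-atomic and ergodic, yet every point it codes lies on the face $\{x_1=b_u(1)L\}$ of a basic cube. The claim does hold for fully supported $\nu$ such as the uniform measure and $\mu_t$, but the statement is for arbitrary ergodic $\nu$. Suppose a neighbouring basic cube shared that face. Then these points would also be coded there by the reflected sequence with $\theta(1)\equiv L-1$. Membership in $\Lambda$ would then depend on the survival of a second process whose Lyapunov exponent need not have the sign of $\lambda_\nu(\mathcal{M})$. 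So no ``direct argument with neighbouring cubes'' can recover the zero-measure conclusions from $\lambda_\nu$ alone.

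The missing idea is that (P2) on a $\nu$-charged word rules this configuration out.
\begin{enumerate}
\item The set of sequences whose $j$-th digit is eventually $0$ is shift-invariant. By ergodicity it is $\nu$-null unless $\nu$ is carried by sequences all of whose letters satisfy $\theta(j)=0$.
\item For such letters, $M_\theta(u,v)>0$ forces $b_v(j)+t_\ell(j)=L\,b_u(j)$ for some $\ell$. Hence a type with $b_u(j)=0$ feeds only types with $b_v(j)=0$.
\item Consequently no product along $\nu$-charged words is strictly positive unless every basic cube has $b(j)=0$. In that case the face lies in $\partial I$ and has no competing coding.
\item The digit $L-1$ and the top slab are handled symmetrically.
\end{enumerate}
Under the hypotheses, therefore, $\nu$-a.e.\ coded point has a unique coding $(u,\theta)$, and your equivalence between $\Gamma_u\Pi_L(\theta)\in\Lambda$ and survival holds $\nu$-almost surely. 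In $d=1$ the same observation shows that $\delta_{0^\infty}$ and $\delta_{(L-1)^\infty}$ violate (P2) when $N\ge 2$. So the atomic case you deferred never arises.
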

We remark that in \cite[Theorem 4.3]{zbMATH08181170} the theorem is stated for $d$-dimensional coin-tossing systems and this is stated in \cite{orgovanyi_thesis} for the more general substitution 1-dimensional sets, but it easily translates to $d$-dimensional substitution sets, analogously to how it is explained in the Appendix of \cite{zbMATH08181170}.
\section{Meaning and implications of Theorem~\ref{thm:dim}}
\subsection{Analysis of the formula in dimension 1}
Generally, the formula given in Theorem~\ref{thm:dim} is hard to approximate. Here, we summarize what we know about this formula in the case of the coin-tossing model. In what follows we describe the function $p\to \dimH \Lambda_p$, where $\Lambda_p$ is the projected (1-dimensional) random attractor corresponding to the coin-tossing system. We give a brief description and a sketch illustration in Figure \ref{a22}. For coin-tossing systems we use the deterministic matrices $\mathbf{B}_0,\dots,\mathbf{B}_{L-1}$ from \eqref{a90}. Let $\widehat{P}$ denote the pressure function \eqref{y99} of $\{\mathbf{B}_0,\dots,\mathbf{B}_{L-1}\}$, and let
\begin{equation*}
    \widehat{\lambda}\coloneqq\lim_{t\to0+}\widehat{P}'(t),\qquad \widehat{\eta}\coloneqq\widehat{P}'(1)
\end{equation*}
(these exist by Lemma~\ref{b88}). Since $\mathbf{M}_\theta=p\,\mathbf{B}_\theta$, the pressure of the expectation matrices is $P(t)=\widehat{P}(t)+t\log p$, hence $\lambda=\widehat{\lambda}+\log p$ and $\eta=\widehat{\eta}+\log p$, where $\lambda$ and $\eta$ are defined in Section~\ref{a46}. In particular, $\lambda>0$ if and only if $p>\mathrm{e}^{-\widehat{\lambda}}$, and $\eta>0$ if and only if $p>\mathrm{e}^{-\widehat{\eta}}$. When the matrices are numbers, $\widehat{P}(t)=\log\big(\sum_i \widehat{m}_i^{\,t}\big)$, where $\widehat{m}_i$ is the number of cylinders in the $i$-th column.

In the case where $\lambda\geq 0$, the infimum is attained at $t=0$.
In this case, the dimension is the ambient dimension.
If $\eta\leq 0$, the infimum is attained at $t=1$, hence the dimension is $P(1)/\log(L)=\log\left(\sum_{h\in H}p_h\cdot \#h\right)/\log(L)$, which is the expected similarity dimension of the IFS. Otherwise, the infimum is attained in the interior of $[0,1]$ hence the dimension strictly drops from the expected similarity dimension. 

For coin-tossing systems with probability parameter $p$ as we increase $p$ from $1/M$ to $1$ the system goes through the upper mentioned three phases.

\begin{itemize}
    \item \emph{when the matrices are numbers:} this is the case when one projects a sponge to the coordinate axis, which was described in the previous subsection. In this case the formula is provided by \cite{zbMATH04022321,zbMATH04115622}. Unless all $\widehat m_i$ agree, the function $p\mapsto \dimH \Lambda_p$ is concave and continuous on $(1/M,1]$, and
real-analytic on each of the intervals $(1/M,\mathrm{e}^{-\widehat{\eta}})$,
$(\mathrm{e}^{-\widehat{\eta}},\mathrm{e}^{-\widehat{\lambda}})$ and
$(\mathrm{e}^{-\widehat{\lambda}},1]$. A depiction of the case when not all $\widehat m_i$ are equal can be seen in the first subfigure of Figure \ref{a22}.

    \item \emph{strictly inhomogeneous case:} When the reciprocal of the contraction ratio ($L$) does not divide the number of maps ($M$), and the deterministic projection is an interval e.g. $45$-degree projection of the random Sierpi\'nski carpet. In this case the middle, $(\mathrm{e}^{-\widehat{\eta}},\mathrm{e}^{-\widehat{\lambda}})$ is a proper interval. The function is concave, continuous. Generally it is not known whether it is differentiable in the middle ($\mathrm{e}^{-\widehat{\eta}}\leq p\leq \mathrm{e}^{-\widehat{\lambda}}$)  interval. We remark that the middle interval may be a single point if the set is spacially homogeneous, if reciprocal of the contraction ratio ($L$) does divide the number of maps ($M$), for example in the case of the Mandelbrot percolation.
    \item \emph{The homogeneous case:} when every column sum of every $\mathbf{B}_\theta$ equals
the same constant $K$, so that $M=K\cdot L$. This happens for example for any rational projection
of Mandelbrot percolation (only for rational directions do we have the matrices). In this case the formula is as follows: for $p\leq L/M$ it is $\log(Mp)/\log(L)$, and for $p>L/M$ the value is one. The function is concave, continuous, non-differentiable at $L/M$, analytic otherwise.
\end{itemize}

\begin{figure}
    \centering
    \includegraphics[width=\linewidth]{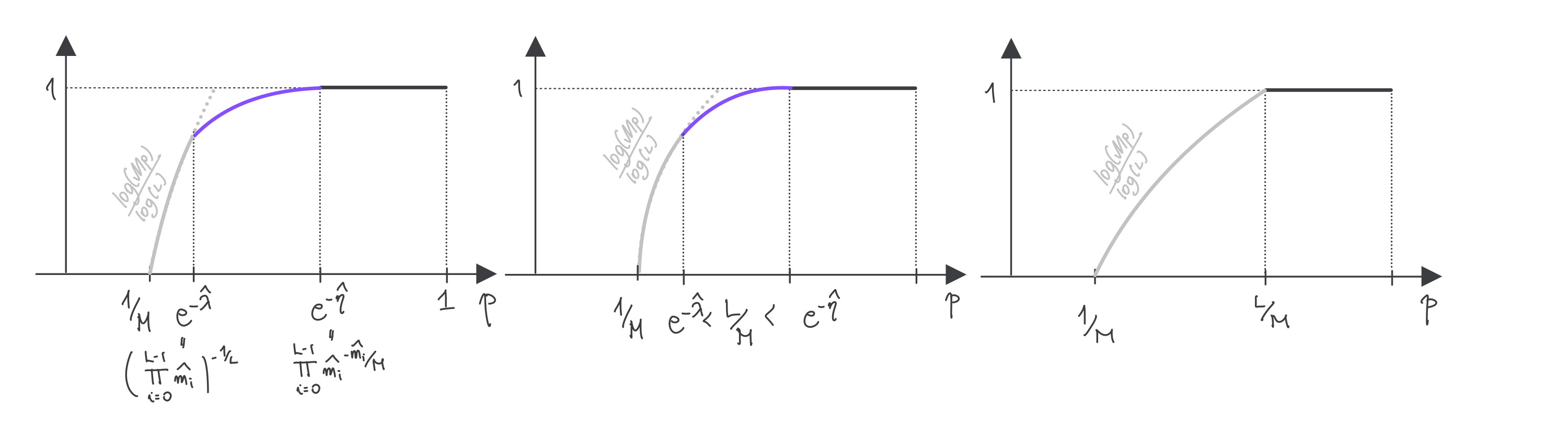}
    \caption{The sketch of the dimension function in case of when the matrices are numbers, in the strictly inhomogeneous case, and in the homogeneous case respectively.}\label{a22}
\end{figure}
For coin-tossing systems, $\lambda=0$ implies that the Lebesgue measure is zero.
This is made precise in the following lemma.
In fact, it is known that this can only happen for a single value of the parameter $p$. This is because for coin-tossing systems the Lyapunov exponent is $\lambda=\log p+ \widehat{\lambda}$, (recall that $\widehat{\lambda}$ is the Lyapunov exponent corresponding to the deterministic system), which is strictly increasing in $p$.

\begin{lemma}
    If the Lebesgue measure of the random attractor is almost surely zero, then $\lambda\leq 0$.
    If $\lambda<0$, the dimension is strictly smaller than the ambient dimension.
    For coin-tossing systems, $\Leb(\Lambda)=0$ holds if and only if $\lambda\leq 0$.
\end{lemma}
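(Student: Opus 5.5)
The lemma has three parts, and I will prove them in that order. Each relies on Theorem~\ref{measureofattr_theorem} applied to the uniform measure $\nu$ on $\Sigma^{[L^d]_0}$, together with the main Theorem~\ref{thm:dim}. Under this choice $\widetilde{\nu}=\sum_{u}(\Gamma_u\circ\Pi_L)_*\nu$ is, up to a constant factor, $d$-dimensional Lebesgue measure restricted to the union of the basic cubes $\bigcup_u J^{(u)}$. This union contains $\Lambda$, since every retained cylinder lies in some $J^{(v)}_{\underline\theta}$ by Fact~\ref{a96}. Hence $\widetilde\nu(\Lambda)>0$ if and only if $\Leb(\Lambda)>0$, and the Lyapunov exponent $\lambda_\nu(\mathcal M)$ is exactly $\lambda$. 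Assumption~\ref{ass:pos} gives (P1) and (P2), and the positive word automatically has positive uniform measure, so the hypotheses of Theorem~\ref{measureofattr_theorem} hold.

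\emph{First claim.} I argue by contraposition. Suppose $\lambda>0$. The first bullet of Theorem~\ref{measureofattr_theorem}, in its strengthened form for the uniform measure, gives $\widetilde\nu(\Lambda)>0$ almost surely on non-extinction. Non-extinction has positive probability because $\mathbb E(\#\mathcal E_1)>1$. So $\Leb(\Lambda)>0$ with positive probability, and the Lebesgue measure is not almost surely zero.

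\emph{Second claim.} Suppose $\lambda<0$. By Lemma~\ref{x46}, $\lim_{t\to0+}P'(t)=\lambda<0$, and $P$ is continuously differentiable on $(0,\infty)$ and convex on $[0,\infty)$ by Lemma~\ref{b88}. Hence there is a small $t_0>0$ with $P(t_0)<P(0)$; I would get this from the mean value theorem on $[0,t_0]$, using continuity of $P$ at $0$, which convexity on the real line supplies.

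Next I compute $P(0)$. Since $\|\mathbf M_{\underline\theta}\|>0$ for every word by (P1), we have $\|\mathbf M_{\underline\theta}\|^0=1$. Therefore $P(0)=\log L^d=d\log L$.

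Theorem~\ref{thm:dim} then gives
\begin{equation*}
\dimH\Lambda=\inf_{t\in[0,1]}\frac{P(t)}{\log L}\le \frac{P(t_0)}{\log L}<d
\end{equation*}
almost surely on non-extinction. On extinction $\Lambda$ is empty, so the dimension is again below $d$.

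\emph{Third claim (coin tossing).} Take $\mathbf M_\theta=p\mathbf B_\theta$. If $\lambda\le0$ then $\Leb(\Lambda)=0$ almost surely: the case $\lambda<0$ is the second bullet of Theorem~\ref{measureofattr_theorem}, and the case $\lambda=0$ is the third bullet, whose Condition~\ref{a41} holds for the uniform measure by Remark~\ref{rem:cointossing-regular}.

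For the converse, if $\Leb(\Lambda)=0$ almost surely then $\lambda\le0$ by the first claim. I read the statement "$\Leb(\Lambda)=0$" in the almost sure sense throughout. This matches the dichotomy in Theorem~\ref{measureofattr_theorem}, where for $\lambda>0$ the positivity holds a.s.\ on non-extinction.

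The main obstacle is the identification of $\widetilde\nu$ with Lebesgue measure on $\bigcup_u J^{(u)}$. The cubes $J^{(u)}$ have disjoint interiors and their boundaries are Lebesgue-null, so the claim reduces to checking that $\Gamma_u\circ\Pi_L$ pushes the uniform Bernoulli measure to normalized Lebesgue measure on $J^{(u)}$. This is the standard fact for $L$-adic codings.

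A secondary point is that the second claim needs $\lambda<0$ strictly. When $\lambda=0$ the convexity argument gives no drop in dimension, which is consistent with the statement.
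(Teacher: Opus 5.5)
Your proposal is correct and follows the paper's proof. The first and third claims come from Theorem~\ref{measureofattr_theorem} with the uniform measure, using Remark~\ref{rem:cointossing-regular} for the critical case $\lambda=0$; the second comes from $\lim_{t\to0+}P'(t)=\lambda<0$ forcing $\inf_{t\in[0,1]}P(t)<P(0)=d\log L$, combined with the dimension formula. You are slightly more careful than the paper: you check that $\Lambda\subset\bigcup_u J^{(u)}$, so that $\widetilde\nu(\Lambda)=0$ really gives $\Leb(\Lambda)=0$, and you make the inequality $P(t_0)<P(0)$ explicit.
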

\begin{proof}
    The first part follows from Theorem~\ref{measureofattr_theorem} with $\nu$ being the uniform measure, in this case  $\widetilde{\nu}$ is the constant multiple of the Lebesgue measure.
    For the second part, if $\lambda<0$, the infimum of $P(t)$ cannot be attained at $t=0$ since $\lim_{t\to 0+}P'(t)=\lambda<0$ (by Lemma \ref{x46}).
    Therefore, the infimum is attained at some $t\in(0, 1]$, which implies that the dimension is strictly smaller than the ambient dimension.

    The final assertion concerning coin-tossing systems follows from Remark~\ref{rem:cointossing-regular} combined with Theorem~\ref{measureofattr_theorem}.
\end{proof}

\section{Proof of Theorem~\ref{thm:dim}}
We start this section with two preliminary statements, which we will use in the proofs.

\begin{lemma}[{\cite[Theorem 3.5]{zbMATH06808299}}]\label{t99}
    $\dimH (\Lambda)=\dimB (\Lambda)=\dimP(\Lambda)$ almost surely.
\end{lemma}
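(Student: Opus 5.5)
The plan is to identify a single deterministic number $s$, defined through first moments, and then show that $\overline{\dim}_B\Lambda\le s$ and $\dimH\Lambda\ge s$ almost surely on non-extinction. Since $\dimH\le\dimP\le\overline{\dim}_B$ and $\dimH\le\underline{\dim}_B\le\overline{\dim}_B$ hold for every set, this gives all three equalities at once.

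\emph{The exponent.} For $n\in\N$ let $N_n$ be the number of cubes of $\mathcal{D}_{n-1}$ (rescaled to the mesh of $S_{\underline i}(I)$, $|\underline i|=n$) that meet some $S_{\underline i}(I)$ with $\underline i\in\mathcal{E}_n$. Set
\begin{equation*}
    s\coloneqq\limsup_{n\to\infty}\frac{\log\mathbb{E}[N_n]}{n\log L}.
\end{equation*}
Conditioning on the first $m$ levels, one sees that $\mathbb{E}[N_{n+m}]\le C\,\mathbb{E}[N_n]\,\mathbb{E}[N_m]$ up to constants. Hence, by submultiplicativity, the $\limsup$ is essentially a limit.

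\emph{Upper bound.} Fix $\varepsilon>0$. Markov's inequality gives $\mathbb{P}(N_n>L^{n(s+\varepsilon)})\le L^{-n\varepsilon/2}$ for all large $n$. By Borel--Cantelli, almost surely $N_n\le L^{n(s+\varepsilon)}$ eventually. Since $\Lambda\subset\Lambda_n$ and each $S_{\underline i}(I)$ is covered by boundedly many grid cubes of side $\asymp L^{-n}$, this yields $\overline{\dim}_B\Lambda\le s+\varepsilon$ almost surely.

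\emph{Lower bound.} This is the main step. I would build a random subsystem with strong separation and apply Hawkes' formula (the Galton--Watson dimension formula recalled in the historical overview).
\begin{enumerate}
    \item Fix $n$ large. Among the surviving level-$n$ cylinders, choose one cylinder from each occupied grid cube, restricted to cubes lying in a fixed residue class modulo $K$ in every coordinate, with $K$ depending only on $\mathcal{S}$. This keeps the chosen images $S_{\underline i}(I)$ pairwise separated by a gap $\asymp L^{-n}$. Out of the $K^d$ residue classes, one carries at least $N_n/K^d$ cubes in expectation, so the number $Z$ of chosen cylinders satisfies $\mathbb{E}[Z]\gtrsim\mathbb{E}[N_n]$.
    \item Iterate this selection independently inside each chosen cylinder. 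Since $\Lambda$ is statistically self-similar, with independent copies of the construction below each node, this defines a Galton--Watson tree with offspring $Z$ whose projected limit set $\Lambda'\subset\Lambda$ is a random self-similar set with contraction $L^{-n}$ and the strong separation condition.
    \item Choose $n$ along a subsequence with $\mathbb{E}[N_n]\ge L^{n(s-\varepsilon)}$. Then $\mathbb{E}[Z]>1$, and Hawkes' formula gives
    \begin{equation*}
        \dimH\Lambda'=\frac{\log\mathbb{E}[Z]}{n\log L}\ge s-2\varepsilon
    \end{equation*}
    on the event $\{\Lambda'\neq\emptyset\}$, which has positive probability.
\end{enumerate}

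\emph{Upgrade to almost sure on non-extinction.} I expect this step to be the main obstacle, as it is the point requiring care. The event $\{\dimH\Lambda\ge s-2\varepsilon\}$ is inherited: $\Lambda=\bigcup_{i\in\mathcal{E}_1}S_i(\Lambda^{(i)})$ with i.i.d.\ copies $\Lambda^{(i)}$, and Hausdorff dimension is the maximum over the pieces. By the standard zero--one law for inherited properties of Galton--Watson trees, its probability conditioned on non-extinction is therefore $0$ or $1$, and it is positive by the previous step. Letting $\varepsilon\downarrow0$ gives $\dimH\Lambda\ge s\ge\overline{\dim}_B\Lambda$ almost surely on non-extinction. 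Together with the general inequalities above, this proves the lemma.

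One technicality remains: cylinders of $\Lambda_n$ need not contain points of $\Lambda$. I would handle this by counting only cylinders whose subtree survives forever. Each surviving cylinder's subtree survives forever with the same probability $q>0$, independently of the first $n$ levels, so restricting to such cylinders changes expectations only by the factor $q$. The separated selection can therefore be made among these cylinders, which ensures $\Lambda'\subset\Lambda$, at the cost of a constant factor in $\mathbb{E}[Z]$.
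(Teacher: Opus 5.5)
Your argument is correct, and it takes a different route from the paper. The paper gives no proof of this lemma; it simply invokes Troscheit's result for more general random graph-directed systems.

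You instead prove the statement directly. The first-moment exponent $s=\limsup_n \log\mathbb{E}[N_n]/(n\log L)$ bounds $\overline{\dim}_B\Lambda$ from above almost surely, by the same Markov--Borel--Cantelli argument as in Lemma~\ref{b89}. It also bounds $\dimH\Lambda$ from below: grid alignment lets you extract a sub-Galton--Watson tree with uniformly separated cylinders and mean offspring $\gtrsim \mathbb{E}[N_n]/K^d$, using a selection rule that depends only on the next $n$ levels, and Hawkes' formula applies to it. The 0--1 law for inherited properties then removes the positive-probability restriction.

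Your residue-class selection is a cleaner form of the paper's greedy choice of the disjoint family $\widehat{\mathcal{E}}_n$ in the quasi-supermultiplicativity lemma. It adds gaps of order $L^{-n}$, which is what makes the projection of the selected boundary bi-Lipschitz.

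The citation buys generality and brevity: no grid structure is needed. Your route is self-contained and identifies the common value as the first-moment exponent $\gamma/\log L$ of the upper-bound section. Consequently, together with Lemma~\ref{b90}, the lower bound of the main theorem becomes equivalent to the first-moment inequality $\gamma\ge\inf_{t\in[0,1]}P(t)$.

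Three small corrections.

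\begin{enumerate}
\item The claimed bound $\mathbb{E}[N_{n+m}]\le C\,\mathbb{E}[N_n]\,\mathbb{E}[N_m]$ is not justified. Conditioning on level $n$ only gives $\mathbb{E}[\#\mathcal{E}_n]\,\mathbb{E}[N_m]$, because many retained cylinders stacked at one position spawn independent copies whose union covers more cubes. The inequality that genuinely holds is the reverse one (quasi-supermultiplicativity), and the one you state can fail by polynomial factors. You never use it, since both halves only need the $\limsup$, so drop it.
\item It is the complement $\{\dimH\Lambda<a\}$, with $a=s-2\varepsilon>0$, that is inherited, because the empty set must have the property. Equivalently, your max-over-pieces observation shows that $x=\mathbb{P}(\dimH\Lambda<a)$ is a fixed point of the offspring generating function, hence equals the extinction probability. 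If $s-2\varepsilon\le 0$ there is nothing to prove.
\item The final ``technicality'' is unnecessary. The selected tree is a subtree of $\mathcal{T}$, so its boundary lies in $\mathcal{E}_\infty$ and $\Lambda'\subset\Lambda$ holds automatically. Hawkes' formula already accounts for extinction of the selected tree. Selecting only cylinders with infinite descent would make the selection depend on the whole future, and you would then need the reduced-tree decomposition to keep the Galton--Watson structure.
\end{enumerate}
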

Lemma \ref{t99} was proven by Sascha Troscheit for more general (graph-directed) systems.
\begin{lemma}\label{a31}
    $\dimH(\Lambda)$ is constant almost surely conditioned on non-extinction.
\end{lemma}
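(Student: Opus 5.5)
We use a zero--one law for inherited properties of Galton--Watson trees (as in Lyons--Peres, \emph{Probability on Trees and Networks}, Prop.~5.6): if a property $A$ of the labelled tree is inherited, i.e.\ every finite tree has it and whenever the tree has it so do all subtrees rooted at first-generation children, then $\mathbb{P}(A\mid \text{non-extinction})\in\{0,1\}$. We apply it to the events
\[
A_s=\{\dimH\Lambda\le s\},\qquad s\in\mathbb{Q}.
\]
The law is stated for Galton--Watson trees, and here the tree is labelled and the dimension depends on the labels through the maps $S_i$. The argument below uses only the tree structure and the independence of the labels of distinct subtrees, so it carries over.

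\textbf{Step 1 (decomposition).} By the construction in Section~\ref{a59}, for each $i\in\mathcal{E}_1$ the subtree of $\mathcal{T}$ rooted at $i$ is distributed like $\mathcal{T}$, is built from the variables $\underline{W}^{[i\underline{j}]}$, and these families are independent over $i$ and independent of $\underline{W}^{[\emptyset]}$. Let $\Lambda^{(i)}=\pi(\mathcal{E}^{(i)}_\infty)$ be the corresponding copy of $\Lambda$. Then
\[
\Lambda=\bigcup_{i\in\mathcal{E}_1}S_i(\Lambda^{(i)}),
\]
and the union is finite.

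\textbf{Step 2 (inheritance).} Each $S_i$ is a similarity and Hausdorff dimension is finitely stable, so $\dimH\Lambda=\max_{i\in\mathcal{E}_1}\dimH\Lambda^{(i)}$, with the convention that the empty set has dimension $-\infty$ (or $0$). Hence if $\dimH\Lambda\le s$ then $\dimH\Lambda^{(i)}\le s$ for every $i\in\mathcal{E}_1$. On extinction $\Lambda=\emptyset$, so $A_s$ holds for every finite tree. Thus $A_s$ is inherited. Measurability of $\omega\mapsto\dimH\Lambda(\omega)$ follows from $\Lambda$ being a random compact set; alternatively, Lemma~\ref{t99} lets us replace $\dimH$ by $\dimB$, which is a limit of measurable covering numbers of the sets $\Lambda_n$.

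\textbf{Step 3 (the zero--one argument).} Put $q=\mathbb{P}(A_s^c)$. By Step~2, $A_s^c$ occurs exactly when some child subtree lies in $A_s^c$. Independence gives $1-q=f(1-q)$, where $f$ is the generating function of $Z=\#\mathcal{E}_1$. We have $\mathbb{E}Z>1$ by our standing assumption. The fixed points of $f$ in $[0,1]$ are therefore $1$ and the extinction probability $q_e$, so $\mathbb{P}(A_s)\in\{q_e,1\}$. Since $A_s$ contains the extinction event, this means $\mathbb{P}(A_s\mid\text{non-extinction})\in\{0,1\}$.

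\textbf{Step 4 (conclusion).} Take $s^*=\inf\{s\in\mathbb{Q}:\mathbb{P}(A_s\mid\text{non-extinction})=1\}$. This set is nonempty because $\dimH\Lambda\le d$. Intersecting over countably many rationals $s>s^*$, and taking the countable union of the null events $A_s$ for rationals $s<s^*$, shows that $\dimH\Lambda=s^*$ almost surely on non-extinction.

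The main obstacle is justifying the zero--one law in this labelled setting, namely the independence used in Step~3. It follows directly from the product structure $\mathbb{P}^{\mathcal{T}_M}$: the events concerning distinct child subtrees depend on disjoint coordinate sets.
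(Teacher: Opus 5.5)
Your proof is correct and takes essentially the paper's approach. The paper applies the zero--one law for inherited properties (Lemma~\ref{inherited}, i.e.\ Lyons--Peres Prop.~5.6) to the events $\{\mathcal{H}^s(\Lambda)=0\}$, whereas you apply it to $\{\dimH\Lambda\le s\}$, re-derive the law from the fixed points of the generating function, and write out the countable-rationals step that the paper leaves implicit (with the convention $\dimH\emptyset=-\infty$, or $s\ge 0$, as you note).
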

\begin{proof}
    This follows from the fact that having zero $s$-dimensional Hausdorff measure for any $s$ is an inherited property (see Section~\ref{a43}, in particular Lemma \ref{inherited}).
\end{proof}

We prove the upper and lower bounds separately.
The proof has the same structure and follows similar ideas as can be found in \cite{zbMATH04115622, zbMATH04022321}.
The proof of the upper bound is essentially a combination of the proofs for similar statements in those two papers, with minor modifications to fit the matrix setting. We note that the proof in similar form appeared in \cite{menger}, for the convenience of the reader we repeat the argument.

In \cite{zbMATH04115622, zbMATH04022321}, three possible cases are identified for the proof of the lower bound.
(These cases correspond to the phases defined at the end of Section~\ref{a46} but we end up having 4 cases.)

 We will follow the structure of the proofs appearing in \cite{zbMATH04115622,zbMATH04022321}. Similarly to those we consider analogous cases,  however in the matrix setting we must deviate from the original proofs, because generally the matrix norm we are using is not multiplicative.

\subsection{Proof of the upper bound}
For the upper bound, we count the number of level-$n$ $L$-adic cubes required to cover the retained level-$n$ cylinders.

Let $\mathbf{Y}=(Y_0, Y_1, \dots)$ be a sequence of random variables counting the level-$n$ $L$-adic cubes intersecting $\Lambda_n$, which is the $n$-th level approximation of the random set.
Namely,
\begin{equation*}
    Y_0\coloneqq 1, \quad
    Y_n \coloneqq\#\{u\underline{a} \in [N]_1\times[L^d]_0^n: J^{(u)}_{\underline{a}}\cap \Lambda_n \neq \emptyset\}.
\end{equation*}
With this notation, 
\begin{equation}\label{b91}
    \overline{\dim}_\text{B}(\Lambda)\leq\limsup_{n\to \infty}\frac{\log(Y_n)}{n\log (L)}.
\end{equation}

The proof has two main structural parts: 
\begin{equation}\label{a44}
    \limsup_{n\to \infty}\frac{\log (Y_n)}{n} \leq \sup_{n}\frac{\log(\mathbb{E}[Y_n])}{n}=\lim_{n\to\infty} \frac{\log(\mathbb{E}[Y_n])}{n},
\end{equation}
where the inequality holds almost surely, and
\begin{equation}\label{a45}
    \lim_{n\to\infty} \frac{\log(\mathbb{E}[Y_n])}{n}\leq \inf_{t\in[0, 1]}P(t).
\end{equation}

We will see that the equality of \eqref{a44} is a consequence of Fekete's superadditive lemma.
The inequality of \eqref{a44} will follow using Markov's inequality and the Borel--Cantelli Lemma. 

\subsubsection{Proof of equation \eqref{a44}}
Let us recall Fekete's lemma on the limit of a superadditive sequence.
\begin{lemma}[Fekete's lemma]
    If $(b_n)_{n=1}^\infty$ is a superadditive sequence (i.e.\ $b_{n+m}\geq b_n+b_m$ for all $n,m\in\N$), then
    \begin{equation}
        \lim_{n\to \infty}\frac{b_n}{n}\text{ exists, and }\lim_{n\to \infty}\frac{b_n}{n}=\sup_{n}\frac{b_n}{n}.
    \end{equation}
\end{lemma}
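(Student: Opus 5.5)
The statement is Fekete's lemma. I would prove it directly from superadditivity by a division-with-remainder argument. Set $s\coloneqq\sup_{n}\frac{b_n}{n}\in(-\infty,+\infty]$. Trivially $\frac{b_n}{n}\le s$ for every $n$, so $\limsup_{n\to\infty}\frac{b_n}{n}\le s$. It therefore suffices to show
\begin{equation*}
    \liminf_{n\to\infty}\frac{b_n}{n}\ge \frac{b_m}{m}\quad\text{for each fixed } m\in\N .
\end{equation*}
Taking the supremum over $m$ then gives $\liminf\ge s$. Hence the limit exists in the extended sense and equals $s$.

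Fix $m$ and write $n=qm+r$ with $q\ge 0$ and $r\in\{1,\dots,m\}$. Using remainders in $\{1,\dots,m\}$ rather than $\{0,\dots,m-1\}$ avoids needing a $b_0$. For $n>m$ we have $q\ge 1$, and iterating superadditivity gives
\begin{equation*}
    b_n\ge q\,b_m+b_r \ge q\,b_m+\min_{1\le r\le m} b_r .
\end{equation*}
Dividing by $n$, the constant $\min_{1\le r\le m} b_r$ divided by $n$ tends to $0$. Also $q/n\to 1/m$ as $n\to\infty$, since $qm\le n< (q+1)m$. Hence $\liminf_n \frac{b_n}{n}\ge \frac{b_m}{m}$, as required.

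The only point needing care is the sign of $b_m$ when passing to the limit in $\frac{q\,b_m}{n}$. This causes no trouble: $b_m$ is a fixed real number and $q/n\to1/m$, so the product converges regardless of sign. If $s=+\infty$, the same inequality applied along $m$ with $\frac{b_m}{m}$ arbitrarily large shows that $\frac{b_n}{n}\to+\infty$, consistent with the statement read in the extended reals. In the application \eqref{a44}, $b_n=\log\mathbb{E}[Y_n]$ is bounded above by $n\log(NL^{d})$ plus a constant, so $s$ is finite there. The superadditivity of $\log\mathbb{E}[Y_n]$ itself is a separate matter, to be verified later via the branching structure. There is no genuine obstacle here; the main step is the remainder decomposition.
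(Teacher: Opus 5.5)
Your proof is correct; it is the standard division-with-remainder argument. The paper gives no proof of Fekete's lemma at all and simply quotes it as classical before deriving Corollary~\ref{b96}, so there is no different route to compare against, apart from two harmless slips. First, with remainders $r\in\{1,\dots,m\}$ one has $qm<n\le (q+1)m$, not $qm\le n<(q+1)m$; this still gives $q/n\to 1/m$. Second, in the application the paper only proves quasi-supermultiplicativity $\mathbb{E}(Y_{n+m})\ge \mathbb{E}(Y_n)\mathbb{E}(Y_m)/k$, so the superadditive sequence is $\log\mathbb{E}[Y_n]-\log k$ rather than $\log\mathbb{E}[Y_n]$ itself.
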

Taking logarithms and applying Fekete's lemma to the sequence $b_n=\log\left(a_{n}\right)-\log(k)$, we obtain the following corollary.
\begin{corollary}\label{b96}
    Consider a strictly positive sequence $(a_n)_{n=1}^\infty$.
    Assume that there exists a $k$ such that for all $n$ and $m$, $a_{n+m}\geq \frac{a_n\cdot a_m}{k}$.
    Then the limit $\lim\limits_{n\to \infty}\frac{1}{n}\log (a_n)$ exists and $\lim\limits_{n\to \infty}\frac{1}{n}\log (a_n)=\sup_{n}\frac{1}{n}\log (a_n)$.
\end{corollary}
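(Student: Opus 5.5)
The plan is to follow the route indicated just before the statement: pass to logarithms and apply Fekete's lemma to the shifted sequence. Set $b_n\coloneqq\log(a_n)-\log(k)$, which is well defined because $a_n>0$. Taking logarithms in $a_{n+m}\geq a_na_m/k$ gives
\begin{equation*}
    \log a_{n+m}-\log k\;\geq\;(\log a_n-\log k)+(\log a_m-\log k),
\end{equation*}
that is, $b_{n+m}\geq b_n+b_m$. Fekete's lemma then says that $\lim_n b_n/n$ exists and equals $\sup_n b_n/n$. Since $b_n/n=\frac1n\log a_n-\frac{\log k}{n}$ and $\frac{\log k}{n}\to0$, the limit $\lim_n\frac1n\log a_n$ exists and equals $\lim_n b_n/n$. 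This settles the existence part of the statement.

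It remains to identify this limit with $\sup_n\frac1n\log a_n$. One inequality is free: a limit never exceeds the supremum of its terms, so $\lim_n\frac1n\log a_n\leq\sup_n\frac1n\log a_n$. This is the direction actually used in \eqref{a44}, where $\limsup_n\frac1n\log Y_n$ is bounded above via the supremum.

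For the reverse inequality, fix $n$ and iterate the hypothesis along multiples of $n$, which gives $a_{mn}\geq a_n^m/k^{m-1}$. Hence
\begin{equation*}
    \frac{1}{mn}\log a_{mn}\;\geq\;\frac1n\log a_n-\frac{(m-1)\log k}{mn}.
\end{equation*}
Letting $m\to\infty$ yields $\lim_j\frac1j\log a_j\geq\frac1n\log a_n-\frac{\log k}{n}$ for every $n$.

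If $k\leq1$, then $-\frac{\log k}{n}\geq0$, so the right-hand side is at least $\frac1n\log a_n$. Taking the supremum over $n$ gives the desired equality.

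The main obstacle is the case $k>1$, where the error term $\frac{\log k}{n}$ is not harmless for small $n$. I do not think the exact equality with $\sup_n\frac1n\log a_n$ can be proved in that case, because it fails. The sequence $a_n=k\,e^{cn}$ satisfies the hypothesis with equality, yet $\sup_n\frac1n\log a_n\geq c+\log k>c=\lim_n\frac1n\log a_n$. So for $k>1$ the argument proves existence of the limit, the identity $\lim_n\frac1n\log a_n=\sup_n\frac1n\log(a_n/k)$, and the inequality $\lim_n\frac1n\log a_n\leq\sup_n\frac1n\log a_n$. Full equality with $\sup_n\frac1n\log a_n$ holds exactly when $k\leq1$. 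In the application to \eqref{a44}, I would use the supremum of $\frac1n\log(\mathbb E[Y_n]/k)$, which suffices there.
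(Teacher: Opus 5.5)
Your proposal is correct, and the existence part is exactly the paper's argument. The paper's whole justification is ``apply Fekete's lemma to $b_n=\log(a_n)-\log(k)$''. As you observe, this gives $\lim_n\frac1n\log a_n=\sup_n\frac1n\log(a_n/k)$, not $\sup_n\frac1n\log a_n$; the paper silently identifies the two suprema.

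Your objection to the second assertion is valid. For $a_n=k\,e^{cn}$ with $k>1$, the hypothesis holds with a constant $k'$ if and only if $k'\geq k$. So no admissible constant $k'\le1$ exists, and the stated identity genuinely fails. The corollary as written is guaranteed only when $k\le 1$, in which case the sequence is itself supermultiplicative.

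One wording fix: equality does not hold \emph{exactly} when $k\le1$. For instance, $a_n=e^{cn}$ satisfies the hypothesis with every $k\ge1$ and still has equality. The correct formulation is that $k\le1$ is exactly the range in which equality is forced for every admissible sequence. (Implicitly $k>0$ throughout, which both you and the paper need for $\log k$.)

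You are also right that nothing downstream breaks. In the application the constant does satisfy $k>1$, since it comes from $\#\widehat{\mathcal{E}}_n\gtrsim Y_n$. However, the proof of Lemma~\ref{b89} only needs $\mathbb{E}[Y_n]\le k\,e^{n\gamma}$. Markov's inequality then gives $\mathbb{P}(Y_n\ge e^{n(\gamma+\varepsilon)})\le k\,e^{-n\varepsilon}$, which is still summable. Only the equality between $\sup$ and $\lim$ in \eqref{a44} must be dropped, or replaced by the supremum of $\frac1n\log(\mathbb{E}[Y_n]/k)$.
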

\begin{definition}[Quasi-supermultiplicative sequence]
    If $(a_n)_{n=1}^\infty$ is a sequence as in Corollary~\ref{b96}, then we call $(a_n)_{n=1}^\infty$ \emph{quasi-supermultiplicative}.
\end{definition}
Since we conditioned on non-extinction of $\#\mathcal{E}_n$, $Y_n>0$ almost surely, and $\mathbb{E}(Y_n)>0$ for each $n$. 
\begin{lemma}
    There exists a $\gamma$ such that
    \begin{equation*}
        \lim_{n\to \infty}\frac{1}{n}\log(\mathbb{E}[Y_n])\eqqcolon\gamma \quad \text{ and } \quad
        \gamma= \sup_{n}\frac{1}{n}\log(\mathbb{E}[Y_n]).
    \end{equation*}
\end{lemma}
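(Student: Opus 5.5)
We will show that $(\mathbb{E}[Y_n])_n$ is quasi-supermultiplicative, i.e.\ that there is a constant $k>0$ with $\mathbb{E}[Y_{n+m}]\geq \mathbb{E}[Y_n]\,\mathbb{E}[Y_m]/k$ for all $n,m$. Corollary~\ref{b96} then gives both the existence of $\gamma=\lim_n \frac1n\log\mathbb{E}[Y_n]$ and $\gamma=\sup_n\frac1n\log\mathbb{E}[Y_n]$. Positivity of $\mathbb{E}[Y_n]$ was noted just before the statement, and $\mathbb{E}[Y_n]\leq N L^{dn}$, so the quantities are finite.

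For the supermultiplicativity we use the self-similar (branching) structure. First write $Y_{n+m}$ as a sum over level-$n$ cubes. Each cube $J^{(u)}_{\underline a}$ with $\underline a\in[L^d]_0^n$ that meets $\Lambda_n$ must meet $S_{\underline i}(I)$ for some retained $\underline i\in\mathcal E_n$. By Fact~\ref{a96}, the sets $S_{\underline i}(J^{(v)})$ are exactly cubes $J^{(w)}_{\widetilde{\underline\theta}}$ at level $n$. Fix a retained $\underline i$. Conditionally on $\mathcal E_n$, the subtree rooted at $\underline i$ is an independent copy of the whole Galton--Watson tree, and by Fact~\ref{a96}(2) $S_{\underline i}$ maps level-$m$ cubes $J^{(v)}_{\underline\theta}$ bijectively onto level-$(n+m)$ subcubes $J^{(w)}_{\widetilde{\underline\theta}\underline\theta}$. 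Hence the number of level-$(n+m)$ cubes hit by $\Lambda_{n+m}$ inside $S_{\underline i}(I)$ is distributed as $Y_m$, up to an additive boundary term that we will not need.

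The difficulty is that different retained $\underline i$ may cover the same level-$n$ cube, so we cannot simply multiply. To get a lower bound we select, for each level-$n$ cube hit by $\Lambda_n$, one retained cylinder $\underline i$ whose image $S_{\underline i}(I)$ contains a fixed type cube inside it. The cylinders $S_{\underline i}(J^{(v)})$ are level-$n$ cubes, and a single cube can be the image of at most a bounded number $C=C(\mathcal S)$ of distinct cubes in $\mathcal D_n$ from distinct level-$n$ cube positions. Each $S_{\underline i}(I)$ is a union of $N$ cubes of $\mathcal D_{n-1}$-type images. So the number of distinct positions $S_{\underline i}(I)$ is at least $Y_n/C'$, and distinct positions give disjoint images up to bounded multiplicity. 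The subtrees of the chosen representatives are independent of each other and of $\mathcal F_n$ (the $\sigma$-algebra of the first $n$ levels), each contributing an independent copy of $Y_m$. Since level-$(n+m)$ cubes are counted at most $C''$ times, this yields
\[
\mathbb{E}[Y_{n+m}\mid\mathcal F_n]\geq \frac{Y_n}{k}\,\mathbb{E}[Y_m],
\]
and taking expectations gives the claim.

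The main obstacle is this bounded-multiplicity bookkeeping: we must check that a level-$(n+m)$ cube counted via one representative is counted by at most boundedly many others, with constants depending only on $L$, $d$, $N$ and the translations. We would handle it using the grid alignment \eqref{x98}, which guarantees that each $S_{\underline i}(I)$ is an $L$-adic-aligned cube of side $L^{-n}\cdot\mathrm{side}(I)$. Any point then lies in at most $(\mathrm{side}(I)+1)^d$ such aligned cubes at a given level, and this supplies the constant $k$.
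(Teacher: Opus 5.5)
Your proposal is correct and follows the paper's route. You prove quasi-supermultiplicativity of $\mathbb{E}[Y_n]$ by selecting $\gtrsim Y_n$ retained level-$n$ cylinders, use the branching property to place a copy of $Y_m$ inside each, and conclude with Corollary~\ref{b96}. The only difference is in the bookkeeping: the paper greedily extracts pairwise disjoint cylinders $S_{\underline i}(I)$ so that nothing is double counted, whereas you keep one representative per distinct position and divide by the bounded overlap multiplicity that comes from grid alignment. That works. One small correction: the subtree count only stochastically dominates the number of hit level-$(n+m)$ cubes inside $S_{\underline i}(I)$ rather than equalling it in law, and that lower bound is the direction you actually need.
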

\begin{proof}
    In light of Corollary~\ref{b96}, it suffices to prove that the sequence $\mathbb{E}(Y_n)$ is quasi-supermultiplicative: namely that for every $n,m>0$
    \begin{align*}
        \mathbb{E}(Y_{n+m})\gtrsim_{\mathcal{S},d} 
        \mathbb{E}(Y_n)\mathbb{E}(Y_m).
    \end{align*}  
    
        Assume that we are at level $n$.
    The $n$-th approximation of the set intersects $Y_n$ $L$-adic cubes;
    denote them by $K_{1},\dots,K_{Y_n}$. Recall that $\mathcal{E}_n$ is the
    index set of retained level-$n$ maps, and in general $Y_n\neq\#\mathcal{E}_n$.
    Each $S_{\underline{i}}(I)$ is a cube of side $s_n \approx_{\mathcal{S}} L^{-(n-1)}$
    aligned with $\mathcal{D}_{n-1}$. At most constant number of cubes
    $K_1,\dots,K_{Y_n}$ are within distance $s_n$ of $S_{\underline{i}}(I)$. This constant only depends on the IFS $\mathcal{S}$ and the dimension $d$.
    We choose a subset $\widehat{\mathcal{E}}_n\subset\mathcal{E}_n$. We go through the index set $\{1, \dots, Y_n\}$ removing elements in each step starting with $1$ and for the elements of this index set we add elements of $\mathcal{E}_n$. For the first remaining index $j$ we pick $\underline{i}\in\mathcal{E}_n$ with $K_1\cap S_{\underline{i}}(I)\neq\emptyset$,
    add $\underline{i}$ to $\widehat{\mathcal{E}}_n$, and remove the indices $k$ for all cubes $K_k\neq K_j$ within
    distance $s_n$ of $S_{\underline{i}}(I)$. We repeat this with the remaining indices. All cylinder meeting a cube that we didn't discard is disjoint from our cylinder $S_{\underline{i}}(I)$, and each step removes only at most constant number of cubes. We continue selecting the maps for the subset of the index set $\{1, \dots, Y_n\}$ for those $j$ which we didn't remove.
    Therefore, the cylinders $S_{\underline{i}}(I)$, $\underline{i}\in\widehat{\mathcal{E}}_n$,
    are pairwise disjoint and $\#\widehat{\mathcal{E}}_n \gtrsim_{\mathcal{S},d} Y_n$.    
    Stepping forward to level $n+m$, for each $\underline{i}\in\widehat{\mathcal{E}}_n$, we obtain jointly independent processes $\{Y^{\underline{i}}_k\}_{k\in \N}$ (see Section \ref{a98}).
    Each $Y^{\underline{i}}_k$ has the same distribution as the original process $\{Y_k\}_k$.
    Since $\widehat{\mathcal{E}}_n$ contains disjoint level-$n$ cylinders, for any $m \geq 0$
    \begin{equation*}
        Y_{n+m}\overset{d}{\geq} \sum_{\underline{i}\in \widehat{\mathcal{E}}_n}Y_m^{\underline{i}}, 
    \end{equation*}
    where $Y_{m}^{i}$ are random variables that are independent of each other and of $Y_n$ and distributed according to $Y_m$.
    Since $\#\widehat{\mathcal{E}}_n \gtrsim_{\mathcal{S},d} Y_n$
    \begin{equation*}
        \mathbb{E}(Y_{n+m})\geq \mathbb{E}(\#\widehat{\mathcal{E}}_n) \mathbb{E}(Y_m)\gtrsim_{\mathcal{S},d} \mathbb{E}(Y_n) \mathbb{E}(Y_m).
    \end{equation*}
    This finishes the proof of the quasi-supermultiplicativity of the sequence $\mathbb{E}(Y_{n})$.
\end{proof}
\begin{lemma}\label{b89}
    Almost surely, conditioned on non-extinction
    \begin{equation*}
        \limsup_{n\to \infty}\frac{1}{n}\log (Y_n) \leq \gamma.
    \end{equation*}
\end{lemma}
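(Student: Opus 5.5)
The plan is the standard Markov inequality plus Borel--Cantelli argument, as announced before \eqref{a44}. Fix $\varepsilon>0$. By the preceding lemma, $\gamma=\sup_n \frac1n\log\mathbb{E}[Y_n]$, so $\mathbb{E}[Y_n]\le \e{n\gamma}$ for every $n$. Markov's inequality then gives
\begin{equation*}
    \mathbb{P}\big(Y_n\ge \e{n(\gamma+\varepsilon)}\big)\le \frac{\mathbb{E}[Y_n]}{\e{n(\gamma+\varepsilon)}}\le \e{-n\varepsilon}.
\end{equation*}
The right-hand side is summable in $n$. By the Borel--Cantelli lemma, almost surely $Y_n<\e{n(\gamma+\varepsilon)}$ for all but finitely many $n$. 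Hence $\limsup_{n}\frac1n\log Y_n\le\gamma+\varepsilon$ almost surely, with the convention $\log 0=-\infty$ on extinction.

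Next I let $\varepsilon\downarrow 0$ along a countable sequence, say $\varepsilon=1/k$. The intersection of countably many full-measure events still has full measure, so $\limsup_n\frac1n\log Y_n\le\gamma$ almost surely.

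Finally I pass to the conditional statement. Non-extinction has positive probability because $\mathbb{E}(\#\mathcal{E}_1)>1$. An event of probability one, intersected with the non-extinction event, has conditional probability one. On non-extinction we have $Y_n\ge1$ for all $n$, so the logarithms are finite and the statement is meaningful.

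There is essentially no obstacle here. The only point to check is that the supremum characterisation of $\gamma$ really gives the clean bound $\mathbb{E}[Y_n]\le \e{n\gamma}$ for \emph{all} $n$, not just asymptotically. That bound is exactly what the quasi-supermultiplicativity (Corollary~\ref{b96}) provides. Even with only an asymptotic bound $\mathbb{E}[Y_n]\le \e{n(\gamma+\varepsilon/2)}$ for large $n$, the same summability argument would go through.
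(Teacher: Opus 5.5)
Your argument is correct and is the paper's argument: Markov's inequality against $\mathrm{e}^{n(\gamma+\varepsilon)}$, then summability, then Borel--Cantelli, with the countable intersection over $\varepsilon$ and the passage to the conditional statement made explicit where the paper leaves them implicit. The one adjustment is to make your fallback the main line. Fekete applied to $\log\mathbb{E}[Y_n]-\log k$ (with $k$ the constant of Corollary~\ref{b96}) only yields $\gamma=\sup_n\frac{1}{n}\log(\mathbb{E}[Y_n]/k)$, and the clean bound $\mathbb{E}[Y_n]\le\mathrm{e}^{n\gamma}$ can genuinely fail for small $n$: since $Y_n\le NL^{dn}$ forces $\gamma\le d\log L$, while in Example~\ref{ex:1d} with $p$ close to $1$ the value $\mathbb{E}[Y_1]$ is close to $4>2=L$. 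Either $\mathbb{E}[Y_n]\le k\,\mathrm{e}^{n\gamma}$ or your bound $\mathbb{E}[Y_n]\le\mathrm{e}^{n(\gamma+\varepsilon/2)}$ for large $n$ still gives a summable series.
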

The proof of Lemma~\ref{b89} is the first part of the argument of \cite[Theorem 1]{zbMATH04022321}; we include it for completeness.
\begin{proof}
    From $\gamma=\sup_{n}\frac{1}{n}\log(\mathbb{E}[Y_n])$, it follows that $\log(\mathbb{E}[Y_n])\leq n\gamma$ for all $n$.
    Let $\varepsilon>0$. By Markov's inequality, $\mathbb{P}(\log (Y_n)\geq n (\gamma+\varepsilon))\leq \frac{\mathbb{E}(Y_n)}{\e{n(\gamma+\varepsilon)}} \leq \e{-n\varepsilon}$. Since $\e{-n\varepsilon}$ is summable for any $\varepsilon>0$, it follows from the Borel--Cantelli lemma that
    \begin{equation*}
        \mathbb{P}\left(\limsup_{n\to \infty}\frac{1}{n}\log (Y_n) \leq \gamma\right)=1
    \end{equation*}
    as claimed.
\end{proof}

\subsubsection{Second part: Equation \eqref{a45}}
We now turn to the part where we only consider the expectation $\mathbb{E}(Y_n)$.
The proof in this section essentially follows from~\cite{zbMATH04115622}, with minor adaptations to the matrix setting.
\begin{lemma}\label{b90}
    The following holds:
    \begin{equation*}
        \limsup_{n} \frac{\log(\mathbb{E}[Y_n])}{n} \leq \inf_{t\in(0, 1)}P(t)=\inf_{t\in[0, 1]}P(t).
    \end{equation*}
\end{lemma}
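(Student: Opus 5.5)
We bound $\mathbb{E}[Y_n]$ by counting, for each level-$n$ $L$-adic cube $J^{(u)}_{\underline{\theta}}$, whether it meets $\Lambda_n$. By Fact~\ref{a96}, every retained cylinder $S_{\underline{i}}(J^{(v)})$ with $\underline{i}\in\mathcal{E}_n$ equals some $J^{(u)}_{\underline{\theta}}$. The set $S_{\underline{i}}(I)$ is a union of a bounded number of such cubes and their neighbours. Hence $J^{(u)}_{\underline{\theta}}\cap\Lambda_n\neq\emptyset$ forces some $\underline{i}\in\mathcal{E}_n$ and some $v$ with $S_{\underline{i}}(J^{(v)})$ equal to a cube at bounded distance from $J^{(u)}_{\underline{\theta}}$. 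Up to a constant depending on $\mathcal{S}$ and $d$, we obtain
\begin{equation*}
    Y_n\lesssim_{\mathcal{S},d}\sum_{\underline{\theta}\in[L^d]_0^n}\sum_{u=1}^N \mathds{1}\Big\{\sum_{v}\#\{\underline{\ell}\in\mathcal{E}_n: S_{\underline{\ell}}(J^{(v)})=J^{(u)}_{\underline{\theta}}\}\geq 1\Big\}.
\end{equation*}
The neighbouring cubes are handled by summing over the boundedly many shifts, each of which is again a cube $J^{(u')}_{\underline{\theta}'}$. The first step is to make this reduction precise, using that cubes are aligned with $\mathcal{D}_{n-1}$.

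Next, fix $t\in[0,1]$ and apply the elementary inequality $\mathds{1}\{X\geq1\}\leq X^t$ for nonnegative integer-valued $X$. Then use Jensen's inequality: $x\mapsto x^t$ is concave, so $\mathbb{E}[X^t]\leq(\mathbb{E}X)^t$. Here $\mathbb{E}X$ is exactly a row sum $\sum_v M_{\underline{\theta}}(u,v)$ by \eqref{eq:expmatrixword}, and this is at most $\|\mathbf{M}_{\underline{\theta}}\|$. This gives
\begin{equation*}
    \mathbb{E}[Y_n]\lesssim_{\mathcal{S},d} N\sum_{\underline{\theta}\in[L^d]_0^n}\|\mathbf{M}_{\underline{\theta}}\|^t .
\end{equation*}
Taking $\frac1n\log$ and the limsup yields $\limsup_n\frac1n\log\mathbb{E}[Y_n]\leq P(t)$ for every $t\in[0,1]$, using the existence of the limit from Lemma~\ref{b88}. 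The case $t=0$ is consistent, because the indicator is then bounded by $1$. Taking the infimum gives the bound with $\inf_{t\in[0,1]}$, and a fortiori with $\inf_{t\in(0,1)}$.

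Finally, the equality $\inf_{(0,1)}P=\inf_{[0,1]}P$ follows from convexity (Lemma~\ref{b88}). A convex function that is finite on an open interval containing $[0,1]$ is continuous there, so the infimum over the open interval equals the infimum over its closure. Finiteness of $P$ near $0$ and $1$ follows from the trivial bounds $c^n\lesssim\|\mathbf{M}_{\underline{\theta}}\|\leq C^n$, which come from allowability (P1).

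The main obstacle is the first step: the boundary effect, namely that $S_{\underline{i}}(I)$ is larger than a single cube $J^{(v)}$-image and touches neighbouring grid cubes. I would handle it by noting that $I$ is the union of the cubes of $\mathcal{D}_{-1}$ inside it. Each of these is either a basic cube or has zero $\zeta$-measure. Replacing $I$ by the union of the basic cubes changes $\Lambda$ at most by points already covered by neighbouring cubes, so the neighbour-counting constant absorbs the discrepancy.
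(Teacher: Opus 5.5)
Your proof is correct and is essentially the paper's argument: bound the indicator by $X^t$, use Jensen to pass to $(\mathbb{E}X)^t$, compare with $\|\mathbf{M}_{\underline{\theta}}\|^t$, and obtain the equality of the infima from continuity of $P$. The differences are minor: you apply the Markov and Jensen steps to the row sum $\sum_v Z_n^{(u)}(\underline{\theta})(v)$ instead of union-bounding over $v$, which gives the constant $N$ rather than $N^{2-2t}$. You also make explicit the bounded-overlap constant relating $Y_n$ to the number of cubes that coincide with images of retained cylinders, a step the paper writes as an equality.
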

\begin{proof}
    The final equality follows by continuity of $P(t)$.
    It remains to prove the first inequality.
    We will prove that for all $\varepsilon>0$ there exists an $N_0$ such that for all $n>N_0$,
    \begin{equation}\label{a47}
        \frac{\log(\mathbb{E}[Y_n])}{n}\leq \inf_{t\in (0, 1)} P(t)+\varepsilon. 
    \end{equation}
    From \eqref{a47}, the assertion follows.

For a fixed $n\in \N$, $\underline{\theta}\in[L^d]_0^n$
and $u\in [N]_1$ we consider the random variables 
\begin{equation}
    Z_{n}^{(u)}(\underline{\theta})(v)=\#\{\underline{i}\in \mathcal{E}_n: \; S_{\underline{i}}(J^{(v)})=J^{(u)}_{\underline{\theta}}\}.
\end{equation}

  It denotes the number of type-$v$ cubes contained in the $L^{-n}$ cube coded by $\underline{\theta}$ inside the basic cube $J^{(u)}$. 
    Fix $0< t< 1$ Then $f(x)=x^t$ is a concave function.
    Then the generalized Markov's inequality (with the function $f(x)$), Jensen's inequality, and finally the concavity of $f(x)$ yield
    \begin{align*}
        \mathbb{E}(Y_n)  ={}&\mathbb{E}\Big(\sum_{u=1}^{N}\sum_{\underline{\theta}\in[L^d]_0^n}\ind{\sum_{v=1}^{N}Z_n^{(u)}(\underline{\theta})(v)\geq 1}\Big)\leq \sum_{\underline{\theta}\in[L^d]_0^n}\sum_{u,v=1}^{N}\mathbb{P}(Z_n^{(u)}(\underline{\theta})(v)\geq 1)\\
        \leq{}&
        \sum_{\underline{\theta}\in[L^d]_0^n}\sum_{u,v=1}^{N} \mathbb{E}(Z_n^{(u)}(\underline{\theta})(v)^t)\leq \sum_{\underline{\theta}\in[L^d]_0^n}\sum_{u,v=1}^{N} \mathbb{E}(Z_n^{(u)}(\underline{\theta})(v))^t \\ = {} &
        \sum_{\underline{\theta}\in[L^d]_0^n}\sum_{u,v=1}^{N} M_{\underline{\theta}}(u,v)^t \leq 
        \frac{N^2}{N^{2t}} \sum_{\underline{\theta}\in[L^d]_0^n} \|\mathbf{M}_{\underline{\theta}}\|^t.
    \end{align*}
    This completes the proof.
\end{proof}

\subsection{Proof of the lower bound}\label{sec:lowerbound}
We now proceed with the lower bound: namely, under the joint primitivity condition we have
\begin{equation*}
    \dimH(\Lambda)\geq \inf_{t\in [0, 1]}\frac{P(t)}{\log(L)}.
\end{equation*}
We consider four separate cases: when $\lambda>0$ and when $\lambda\leq 0$ and $\eta>0$, $\eta=0$, and $\eta<0$. The proof differs in each of these cases.

  \textbf{Phase~\ref{ph1}}: $\lambda>0$. When the system is in Phase~\ref{ph1}, the infimum of the pressure function $P(t)$ in the interval $[0,1]$ is attained at $0$. Since $P(0)=\log(L^d)$, the value of the formula at $t=0$ is $d$. To verify the statement we need to show that the almost sure dimension of the random attractor is $d$. This is the case, because by the definition of Phase~\ref{ph1}, the Lyapunov exponent $\lambda$ is positive. By Theorem~\ref{measureofattr_theorem}, this implies that the Lebesgue measure of the random attractor is positive (almost surely conditioned on non-extinction), so the assertion holds. We remark that Theorem~\ref{measureofattr_theorem} is about a measure which is a positive multiple of Lebesgue measure, however this implies the positivity of the Lebesgue measure as well.

 \textbf{Phase~\ref{ph2}, \ref{ph4}}: $\lambda\leq0$ and importantly $\eta>0$. In this situation, the infimum of $P(t)$ is attained in the interior of $[0, 1]$. By continuity of $P'(t)$ (see Lemma~\ref{b88}), for each $\varepsilon>0$ we can choose $t^*\in(0, 1)$ such that $0<P'(t^*)<\varepsilon$. For this choice of $t^*$, there exists a unique Gibbs measure $\mu_{t^*}$ as in Lemma~\ref{gibbsm}. The Lyapunov exponent $\lambda(\mu_{t^*})$ corresponding to this ergodic measure is strictly positive by Lemma~\ref{gibbsm} \eqref{a24} since we chose $t^*$ such that $0<P'(t^*)$.
    Hence, using the statement and notation from Theorem~\ref{measureofattr_theorem}, $\widetilde{\mu}_{t^*}(\Lambda)>0$ almost surely conditioned on non-extinction.

    Recall that $\widetilde{\mu}_{t^*}$ is the sum of the measures projected to the basic cubes $J^{(i)}$, $i=1,\dots, N$ by the $L$-adic coding maps, and therefore the Hausdorff dimension of the measure is unchanged upon projection.
        It follows that 
       \begin{align*}
    \dimH(\Lambda)\geq \dimH(\widetilde{\mu}_{t^*})\geq &\dimH(\mu_{t^*})\geq \frac{-{t^*} P'({t^*})+P({t^*})}{\log(L)}\\\geq \frac{-{t^*}\varepsilon+P({t^*})}{\log(L)}
&\geq \frac{-\varepsilon +P({t^*})}{\log(L)}\geq -\frac{\varepsilon}{\log(L)} +\inf_{{t}\in[0,1]}\frac{P({t})}{\log(L)}.
\end{align*}
Letting $\varepsilon\to 0$ completes the proof in Phases~\ref{ph2} and~\ref{ph4}.

   \textbf{Phase~\ref{ph3} and \ref{ph5}}: $\eta=0$ and $\lambda\leq 0$. The main technique is to perturb the model, which modifies its phase  to \ref{ph6}. The proof for Phase \ref{ph6} is quite intricate, hence it is deferred to Section~\ref{b87}. 
        We perturb the probability vector $(p_h)_{h\subset[M]_1}$ and denote it by $\widetilde{\underline{p}}=(\widetilde{p}_h)_{h\in \widetilde{H}}$.
        If $p_{\emptyset}>0$, then $\widetilde{H}=H$; otherwise $\widetilde{H}=H\cup \{\emptyset\}$.
        The original system does not die out if $\sum_{h\in H}p_h\#h>1$, so we may choose $\varepsilon$ such that $(1-\varepsilon)\sum_{h\in H}p_h\#h>1$.

        We perturb the probability vector in the following way:
        for $h\in \widetilde{H}\setminus \{\emptyset\}$: $\widetilde{p}_h=p_h(1-\varepsilon)$ and $\widetilde{p}_{\emptyset}=\varepsilon+ p_{\emptyset}(1-\varepsilon)$.
        In this way $\widetilde{\underline{p}}$ is a probability vector and the expectation matrices corresponding to this distribution satisfy $\|\widetilde{\mathbf{M}}_{\underline{\theta}}\|=(1-\varepsilon)^n\|\mathbf{M}_{\underline{\theta}}\|$, for all $\underline{\theta}\in [L^d]_0^n$.
        Hence, the corresponding pressure is $\widetilde{P}(t)=P(t)+t\log(1-\varepsilon)$. 

        Hence its derivative satisfies
        \begin{equation*}
            \widetilde{P}'(t)=P'(t)+\log(1-\varepsilon),
        \end{equation*}
        which implies that the perturbed model is in Phase~\ref{ph6}.
        Then the infimum over the interval $[0, 1]$ is attained at $1$, where $\widetilde{P}(1)=P(1)+\log(1-\varepsilon)$. 

        We can define a simple coupling between the probability space of the perturbed process and the original as follows.
        First, define the random variable $U$ which is uniform on $[0, 1)$.
        We fix an ordering on $H=\{h_1, \dots, h_{\ell}\}$.
        Define the intervals $I_{h_j}=[\sum_{i<j}p_{h_i}, \sum_{i<j}p_{h_i}+p_{h_j})$.
        There is exactly one $j$ such that $U\in I_{h_j}$.
        Let $X_o$ and $X_p$ be random variables, defined as functions of $U$.
        For $U\in I_{h_j}$ we set $X_o=h_j$.
        For the perturbed variable we set $X_p=h_j$ if $\frac{U-\sum_{i<j}p_{h_i}}{p_{h_j}} \leq 1-\varepsilon$ and otherwise $X_p=\emptyset$.
        In this way $X_o$ and $X_p$ are the coupled versions of $\underline{p}$ and $\widetilde{\underline{p}}$, respectively.
        This is depicted in Figure~\ref{b86}.
        \begin{figure}
            \centering
            \includegraphics[width=0.5 \textwidth]{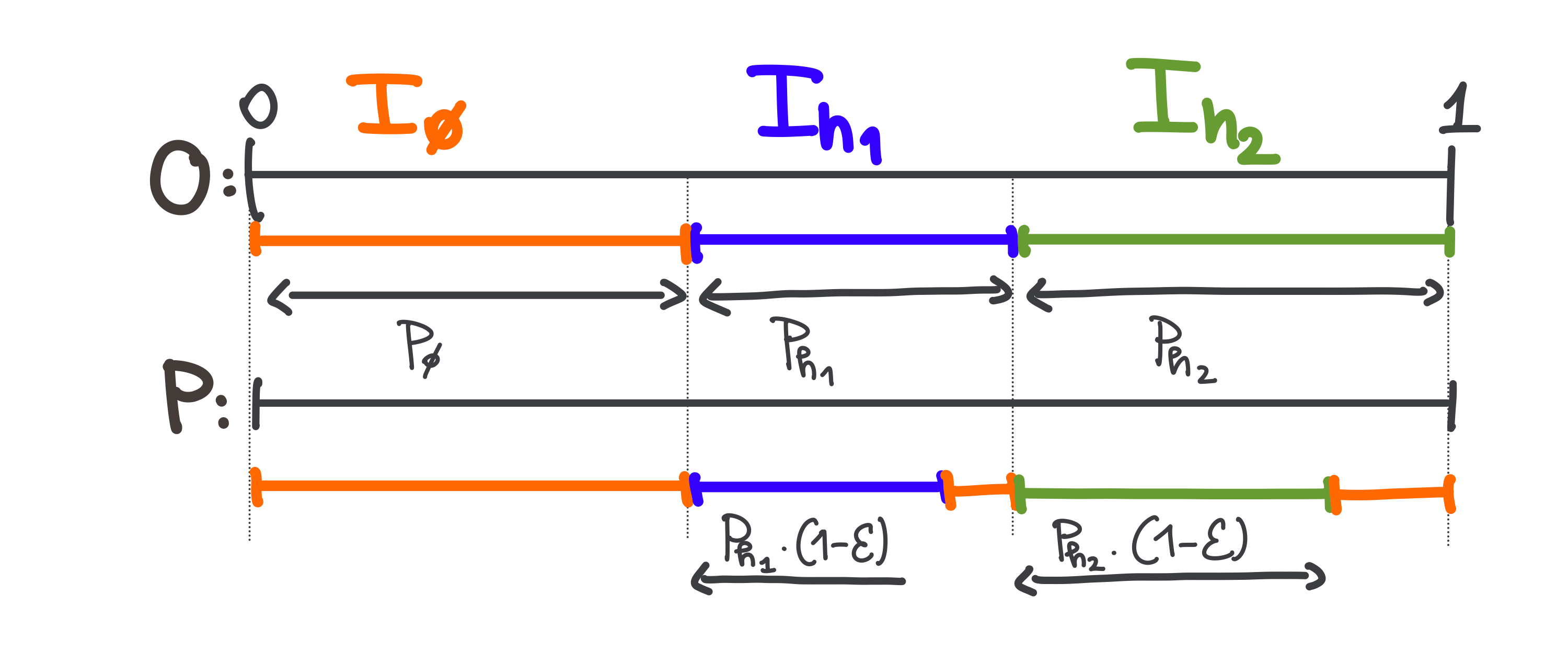}\caption{Depiction of the coupling between the original and the perturbed distribution.
            The value of $X_o$ depends only on which interval $U$ is in, this is shown in the interval denoted by ``O''.
        The value of $X_p$ further depends on whether $U$ is in an orange interval on the part denoted by ``P'', in which case its value is $\emptyset$.}\label{b86}

        \end{figure}

        We construct the labelled Galton–Watson trees $\mathcal{T}_o$ and $\mathcal{T}_p$ with offspring distribution given by $X_o$ and $X_p$, respectively.
        Note that with positive probability the perturbed tree $\mathcal{T}_p$ survives since $\sum_{h\in H}\widetilde{p}_{h}\#h>1$.
        Clearly $\mathcal{T}_p(\omega)\subseteq \mathcal{T}_o(\omega)$ for all $\omega$, and therefore $\dimH (\pi(\partial\mathcal{T}_p(\omega)))\leq \dimH(\pi(\partial\mathcal{T}_o(\omega)))$ for all realizations.

        In this way, assuming that the dimension formula is proved for Phase~\ref{ph6}, it follows that
        \begin{align*}
            \dimH (\pi(\partial\mathcal{T}_p(\omega)))=\inf_{t\in [0, 1]} \frac{\widetilde{P}(t)}{\log(L)}&=\inf_{t\in[0, 1]}\frac{P(t)+t\log(1-\varepsilon)}{\log(L)}\\
                                                                                        &\geq \inf_{t\in[0, 1]}\frac{P(t)+\log(1-\varepsilon)}{\log (L)}.
        \end{align*}
        Letting $\varepsilon\to0$ completes the proof in Phases~\ref{ph3} and~\ref{ph5}.

\subsubsection{Proof of the lower bound, when the model is in \textbf{Phase~\ref{ph6}}}\label{b87}

We start by sketching the steps of the proof.

The fundamental idea hidden behind the technical proof is standard: finding a subset with the same dimension as the original set, but with zero-dimensional slices ensures that the projection does not decrease the dimension.

First, we subdivide the level-$n$ $L$-adic cubes intersecting the level-$n$ approximation into ``light''  and ``heavy'' parts.
The light parts are the level-$n$ $L$-adic cubes which in expectation do not intersect many cylinders.
Lemma~\ref{bigslices} states that the expected number of cylinders intersecting the light part grows at the same exponential rate as the expected number of all cylinders.
Using a large deviation theorem, we show that each of the level-$n$ $L$-adic cubes in the light part is actually light, i.e. does not intersect an exponential number of cylinders (see Lemma~\ref{large_dev_lemma}).
Combining the fact that the number of retained cylinders intersecting the light part grows at the same exponential rate as the total number of cylinders with the observation that each individual cube in the light part intersects only a small number of cylinders, we obtain the desired assertion.

The approach is inspired by \cite{Simon_Rams_2014} where, as an auxiliary lemma, the authors prove in the setting of homogeneous Mandelbrot percolation that if the similarity dimension is less than one, then along all slices the number of cylinders intersecting the slices grows at most at a polynomial rate.
The idea of looking at a subprocess to calculate the dimension is inspired by \cite{zbMATH06830081}.

Since in Phase~\ref{ph6} $P'(1) < 0$, the infimum of $P(t)$ on the interval $[0, 1]$ is attained at $t=1$.
Also note that $P(1) = \log\left( \sum_{h \subset [M]_1} p_h \cdot \#h \right)$.
We will use the following notational shorthands.
\begin{equation*}
    s_0\coloneqq\sum_{h\in H}p_h\cdot \#h>1,\quad  \text{ and } \quad 
    s\coloneqq\frac{\log(s_0)}{\log(L)}.   
\end{equation*}

We will prove that for all $\varepsilon>0$ 
\begin{equation}\label{result}
    \dimH(\Lambda)\geq \frac{\log(s_0)}{\log(L)}-\varepsilon.
\end{equation}

We begin by defining the light (small) parts of $[L^d]_0^n$:
\begin{equation*}
    \mathfrak{S}_n\coloneqq\{\underline{\theta}\in[L^d]_0^n: \: \|\mathbf{M}_{\underline{\theta}}\|<1\}.
\end{equation*}
The $n$-th approximation of the pressure function is 
\begin{equation*}
    P_n(t)\coloneqq\frac{1}{n}\log(\sum_{\underline{\theta} \in [L^d]_0^n} \|\mathbf{M}_{\underline{\theta}}\|^t).
\end{equation*}

\begin{lemma}\label{bigslices}
    There exist $\delta_0\in(0, 1)$ and $N_0$ such that for all $n\geq N_0$,
    \begin{equation*}
        \frac{1}{N}\sum_{\underline{\theta}\in \mathfrak{S}_n}\|\mathbf{M}_{\underline{\theta}}\|\geq s_0^n(1-\delta_0^n/N).
    \end{equation*}
    
\end{lemma}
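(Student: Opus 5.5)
Throughout we are in Phase~\ref{ph6}, so $\eta=P'(1)<0$. The plan is to compare the total $\sum_{\underline\theta}\|\mathbf{M}_{\underline\theta}\|$ with the part coming from heavy words, $\|\mathbf{M}_{\underline\theta}\|\ge 1$, and to show the heavy part is exponentially smaller than $s_0^n$.

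\emph{Step 1: the total sum.} We first show
\begin{equation*}
\sum_{\underline\theta\in[L^d]_0^n}\|\mathbf{M}_{\underline\theta}\| = N s_0^n .
\end{equation*}
Summing the entries of $\mathbf{M}_{\underline\theta}$ over all $\underline\theta$ and $u$, each retained word $\underline\ell\in\mathcal{E}_n$ together with a source type $v$ is counted exactly once. This uses Fact~\ref{a96}: $S_{\underline\ell}(J^{(v)})$ equals exactly one cube $J^{(u)}_{\underline\theta}$. Hence the total is $\sum_{v=1}^N\mathbb{E}\#\mathcal{E}_n=Ns_0^n$.

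The lemma therefore reduces to showing that the heavy sum satisfies
\begin{equation*}
\sum_{\underline\theta\notin\mathfrak S_n}\|\mathbf{M}_{\underline\theta}\|\le s_0^n\delta_0^n .
\end{equation*}
If the constants in Step 1 turn out to be off by a factor depending on $N$, I would absorb it by enlarging $N_0$ and adjusting $\delta_0$ slightly.

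\emph{Step 2: Chernoff bound on the heavy sum.} For any $t>1$, since $\|\mathbf{M}_{\underline\theta}\|\ge1$ on heavy words,
\begin{equation*}
\sum_{\|\mathbf{M}_{\underline\theta}\|\ge1}\|\mathbf{M}_{\underline\theta}\|\le\sum_{\underline\theta}\|\mathbf{M}_{\underline\theta}\|^{t}=e^{nP_n(t)} .
\end{equation*}
By Lemma~\ref{b88}, $P$ is continuously differentiable near $1$ with $P'(1)=\eta<0$. Fix $t=1+\tau$ with $\tau>0$ small, so that $P(1+\tau)<P(1)=\log s_0$; this also uses that $P(1)=\log s_0$, which follows from Step 1.

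\emph{Step 3: uniform control of $P_n$ (main obstacle).} Step 2 bounds the heavy sum by $e^{nP_n(1+\tau)}$, but Step 3 needs $P_n$ itself close to $P$ at a fixed $t$, not just convergence along the limit. Write $q=P(1+\tau)$, so $q<\log s_0$. I expect to use submultiplicativity: $\|\mathbf{M}_{\underline\theta\underline\theta'}\|\le\|\mathbf{M}_{\underline\theta}\|\,\|\mathbf{M}_{\underline\theta'}\|$, so $a_n=\sum_{\underline\theta}\|\mathbf{M}_{\underline\theta}\|^t$ is submultiplicative. By Fekete's lemma, $P(t)=\inf_n P_n(t)$, but that bound runs the wrong way for an upper estimate. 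The existence of the limit (Lemma~\ref{b88}) nevertheless gives $P_n(1+\tau)\le q+\epsilon'$ for all $n\ge N_0$, with $\epsilon'$ chosen so that $q+\epsilon'<\log s_0$.

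\emph{Conclusion.} Setting $\delta_0=e^{q+\epsilon'}/s_0<1$, the heavy sum is at most $s_0^n\delta_0^n$ for $n\ge N_0$. Dividing by $N$ gives
\begin{equation*}
\frac1N\sum_{\underline\theta\in\mathfrak S_n}\|\mathbf{M}_{\underline\theta}\|\ge s_0^n-\frac{s_0^n\delta_0^n}{N}=s_0^n\left(1-\frac{\delta_0^n}{N}\right),
\end{equation*}
which is the claim.
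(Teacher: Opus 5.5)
Your proof is correct and follows the paper's argument essentially step for step. The total sum is $\sum_{\underline\theta}\|\mathbf{M}_{\underline\theta}\|=Ns_0^n$, which gives $P(1)=\log s_0$. The heavy part is then bounded by $\sum_{\underline\theta}\|\mathbf{M}_{\underline\theta}\|^{\overline{t}}=e^{nP_n(\overline{t})}$ for some $\overline{t}>1$ with $P(\overline{t})<P(1)$, which exists because $P'(1)=\eta<0$. Finally, the convergence $P_n(\overline{t})\to P(\overline{t})$ gives the exponential gap $\delta_0$. Your Step~1 count is exact, so the hedge about absorbing an $N$-dependent factor is unnecessary; likewise the Fekete/submultiplicativity detour in Step~3 can be dropped, since only the existence of the limit is used.
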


\begin{proof}
    In this proof we use the notation 
    \begin{equation}
    B_n\coloneqq[L^d]_0^n \setminus \mathfrak{S}_n=\{\underline{\theta}\in[L^d]_0^n: \: \|\mathbf{M}_{\underline{\theta}}\|\geq 1\}
    \end{equation}
 We begin with three preliminary observations: 
     \begin{enumerate}
        \item There exist $\overline{t}>1,\xi>0$, and $N_0$ such that $P(1)-\xi>P_n(\overline{t})$ for all $n\geq N_0$. \label{b85}
        \item $P(1)=\log(\sum_{h\subset [M]} p_h \cdot \#h)+\lim_{n\to \infty}\frac{\log (N) }{n}=\log(s_0)$.
        \item $\sum_{\underline{\theta}\in B_n}\|\mathbf{M}_{\underline{\theta}}\|< (s_0e^{-\xi })^n$ for all $n\geq N_0$.
    \end{enumerate}
The first part follows from a simple calculation using that $\eta=P'(1)<0$. The second part follows from the definition of $M_{\underline{\theta}}(u,v)$. Namely, $\sum_{\underline{\theta}\in [L^d]_0^n}\|\mathbf{M}_{\underline{\theta}}\|$ is simply $N$ times the expected number of retained cylinders at level $n$. From this the expression for $P(1)$ follows. For the third part, observe that for every $n\geq N_0$:
    \begin{equation*}
        \frac{1}{n}\log\left(\sum_{\underline{\theta}\in B_n}\|\mathbf{M}_{\underline{\theta}}\|\right)\leq \frac{1}{n}\log(\sum_{\underline{\theta}\in B_n}\|\mathbf{M}_{\underline{\theta}}\|^{\overline{t}})\leq P_n(\overline{t})< P(1)-\xi.
    \end{equation*}
    The first inequality follows from the definition of $B_n$, namely that $\|\mathbf{M}_{\underline{\theta}}\|\geq 1$ for all $\underline{\theta}\in B_n$. The second follows from $B_n\subset [L^d]_0^n$ and the third from the first observation. Therefore, by the second observation 
    \begin{equation*}
        \sum_{\underline{\theta}\in B_n}\|\mathbf{M}_{\underline{\theta}}\|< \left(s_0 \cdot e^{-\xi}\right)^{n},
    \end{equation*}
    for all $n\geq N_0$.

    For $\xi$ satisfying the assumptions in observation \ref{b85} we fix $\delta_0\coloneqq e^{-\xi}\in(0, 1)$. It follows that
    \begin{equation*}
        \frac{1}{N}\sum_{\underline{\theta}\in \mathfrak{S}_n}\|\mathbf{M}_{\underline{\theta}}\|=\frac{1}{N}\left(\sum_{\underline{\theta}\in [L^d]_0^n}\|\mathbf{M}_{\underline{\theta}}\|-\sum_{\underline{\theta}\in B_n}\|\mathbf{M}_{\underline{\theta}}\|\right) \geq s_0^n \left(1-\delta_0^n/N\right).
    \end{equation*}
    \end{proof}

\begin{definition}[Modified trees]\label{b83}
    \begin{enumerate}
        \item Fix a (deterministic) set $D\subset [M]_1$. We consider the \emph{restricted subtree} $\mathcal{T}_D$ of the random tree $\mathcal{T}$. For each $\omega\in \Omega$ and realization $\mathcal{T}(\omega)$ let $\mathcal{T}_D(\omega)$ be the subtree with root $\emptyset$ and vertices from $D^*$. Namely, for all $n\in \N$ and $\underline{i}\in\mathcal{E}_n$, $\underline{i}\in (\mathcal{T}_D)_n$ if and only if $\underline{i}\in D^n$. The edge set is unchanged between retained vertices. 
        \item The $K$\emph{-step tree} $\mathcal{T}^{(K)}$ is constructed by sampling the original tree $\mathcal{T}$ at cubes of level $nK$. The $n$-th level of this tree, denoted by $\mathcal{T}^{(K)}_n$, is given by:
\begin{equation}
\mathcal{T}^{(K)}_n \coloneqq \mathcal{T}_{nK}.
\end{equation}
The edge set is given by 
\begin{equation}
E^{(K)} = \{ (\underline{i}, \underline{i}\underline{j}) : \underline{i} \in \mathcal{T}^{(K)}_n, \underline{i}\underline{j}\in \mathcal{T}^{(K)}_{n+1} \}.
\end{equation}
    \end{enumerate}

\end{definition}

We can restrict the distribution $\underline{p}$ to deterministic subsets in the following way: fix $D\subset [M]_1$, and we define $\underline{p}_D=(\widetilde{p}_h)_{h\subset [M]_1}$ where
\begin{equation}\label{b84}
    \widetilde{p}_h=\sum_{k\subset [M]_1}p_k\ind_{\{k\cap D=h\}}(k).
\end{equation}
For a distribution $\underline{p}$, let $e(\underline{p})=\sum_{h\subset [M]_1}p_h \#h$.

\begin{lemma}[Basic facts about labelled Galton–Watson trees]\label{tree-facts}
    Assume $\mathcal{T}$ is a labelled Galton–Watson tree with offspring distribution $\underline{p}=(p_h)_{h\subset[M]_1}$.
    Then the following hold: 
    \begin{enumerate}
        \item Fix a deterministic set $D\subset [M]_1$. The restricted tree $\mathcal{T}_D$ is a labelled Galton–Watson tree with offspring distribution $\underline{p}_D=(\widetilde{p}_h)_{h\subset [M]_1}$, defined in \eqref{b84}. The boundary of this tree is non-empty with positive probability if and only if $e(\underline{p}_D)>1$ or there is exactly one child at each level.
        \item For a fixed $K\in \N$, the $K$-step tree $\mathcal{T}^{(K)}$ is also a branching tree with offspring distribution equal to the law of $\mathcal{E}_K$ on $[M]_1^K$. The boundary is non-empty with positive probability if and only if the boundary of the original tree is non-empty with positive probability.
        \item Assume that $\mathcal{T}$ is equipped with the metric $d(\underline{i}, \underline{j})=\rho^{|\underline{i}\wedge \underline{j}|}$ for some $\rho\in(0, 1)$. The Hausdorff dimension of its boundary $\partial \mathcal{T}$ equals $-\log(e(\underline{p}))/\log(\rho)$ almost surely conditioned on non-extinction.
    \end{enumerate}
    
\end{lemma}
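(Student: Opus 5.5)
We treat the three parts separately; each reduces to independence of the labels $\{\underline{W}^{[\underline i]}\}$ attached to the nodes of $\mathcal{T}_M$ together with standard Galton--Watson theory.

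\emph{Part (1).} We check that the children of a retained vertex $\underline i\in D^n$ in $\mathcal{T}_D$ form exactly the set $\omega_{\underline i}\cap D$. The children of $\underline i$ in $\mathcal{T}$ are indexed by $\omega_{\underline i}$, and we then keep only those whose last letter is in $D$. Since the $\omega_{\underline i}$ are i.i.d.\ with law $\underline p$, the sets $\omega_{\underline i}\cap D$ are i.i.d.\ with law $\underline p_D$ given by \eqref{b84}. Moreover, survival in $\mathcal{T}_D$ is determined by these intersected sets through the same inductive rule. Hence $\mathcal{T}_D$ has the law of a labelled Galton--Watson tree with offspring distribution $\underline p_D$.

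The survival criterion is then the classical one applied to the offspring count $\#(\omega\cap D)$, whose mean is $e(\underline p_D)$. There are three cases:
\begin{itemize}
\item if $e(\underline p_D)>1$, survival has positive probability;
\item if $e(\underline p_D)<1$, or $e(\underline p_D)=1$ with nondegenerate offspring count, extinction is almost sure;
\item the degenerate case $\#(\omega\cap D)=1$ a.s.\ gives exactly one child at each level.
\end{itemize}

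\emph{Part (2).} We take the labels at level $nK$ of the original tree. For a vertex $\underline i\in\mathcal{T}_{nK}$, its descendants at level $(n+1)K$ are $\underline i\underline j$ with $\underline j$ ranging over a set determined solely by the labels $\omega_{\underline i\underline k}$ for $|\underline k|<K$. Subtrees hanging from distinct level-$nK$ vertices use disjoint families of labels. Each such family has the same joint law as the labels used to generate $\mathcal{E}_K$ from the root. Thus these descendant sets are i.i.d.\ copies of $\mathcal{E}_K$, and $\mathcal{T}^{(K)}$ is a branching tree with offspring law $\mathcal{E}_K$. Finally, $\partial\mathcal{T}^{(K)}$ is in bijection with $\partial\mathcal{T}$, since an infinite path is determined by its restrictions to levels $nK$. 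Alternatively, survival is equivalent to $\mathcal{T}_{nK}\neq\emptyset$ for all $n$, which is equivalent to $\mathcal{T}_m\neq\emptyset$ for all $m$. Either way the non-extinction events coincide.

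\emph{Part (3).} This is Hawkes' theorem \cite{zbMATH03734900}, or equivalently \cite[Chapter 15]{zbMATH06653785}, and we only sketch it. The offspring count $Z=\#\omega$ has mean $e(\underline p)=m$.

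\textbf{Upper bound.} We cover $\partial\mathcal{T}$ by the $\#\mathcal{T}_n$ cylinders of diameter $\rho^n$. By the Seneta--Heyde/Kesten--Stigum argument, or simply by Markov's inequality and Borel--Cantelli as in Lemma~\ref{b89}, we get $\#\mathcal{T}_n\le (m+\varepsilon)^n$ eventually. This gives $\dimH\le\log m/(-\log\rho)$.

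\textbf{Lower bound.} This is the main step. When $\mathbb{E}[Z\log Z]<\infty$, which holds here because $Z\le M$, the martingale $W_n=\#\mathcal{T}_n/m^n$ converges to a limit $W>0$ a.s.\ on non-extinction. The limits $W^{(\underline i)}$ of the subtree martingales define a branching measure $\mu(\underline i)=m^{-n}W^{(\underline i)}$ on $\partial\mathcal{T}$. We then show that $\mu$ has finite $s$-energy for every $s<\log m/(-\log\rho)$. This follows from the expected-energy computation
\[
\mathbb{E}\sum_n \rho^{-sn}\sum_{|\underline i|=n}\mu(\underline i)^2<\infty,
\]
which uses $\mathbb{E}[W^2]<\infty$ (valid since $Z$ is bounded) and $\mathbb{E}\#\mathcal{T}_n=m^n$. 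Mass distribution then gives the lower bound on $\{W>0\}$, which coincides a.s.\ with non-extinction.

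Finally, by Lemma~\ref{a31}-type inheritance, the lower bound holds almost surely conditioned on non-extinction rather than merely with positive probability.
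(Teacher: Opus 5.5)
Your proposal is correct and takes essentially the same route as the paper, which only says that (1) and (2) follow directly from the construction of the original tree and cites Hawkes \cite{zbMATH03734900} for (3); your check via independence of the node labels $\omega_{\underline i}$, and your branching-measure energy sketch, fill in the details the paper leaves to the reader and to the citation. The final appeal to inheritance is harmless but redundant, since your lower bound already holds on $\{W>0\}$, which coincides almost surely with non-extinction.
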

\begin{proof}
    The first two statements follow simply from the analogous properties of the original tree. For the proof of the third statement see \cite{zbMATH03734900}.
\end{proof}
For the rest of the proof, we fix $K$ such that 
\begin{itemize}
    \item $K>N_0$,  
    \item $\log(s_0^K(1-\delta_0^K))/(K\log(L))\geq s-\varepsilon/3$,  and
    \item $s_0^K(1-\delta_0^K)>1$ (which is possible, since $s_0>1$).
\end{itemize}
By Fact~\ref{a96}, for each $\underline{i}\in [M]_1^K$ and each $v\in [N]_1$
there exist unique $u\in [N]_1$ and $\underline{\theta}\in [L^d]_0^K$ such that
$S_{\underline{i}}(J^{(v)})=J^{(u)}_{\underline{\theta}}$;
we denote this $\underline{\theta}$ by $\underline{\theta}(v,\underline{i})$.
We consider the set $\mathfrak{D}_K\subset[M]_1^K$  defined as 
\begin{multline*}
    \mathfrak{D}_K=\{\underline{i}\in [M]_1^K:\; \forall v \in [N]_1\;  \exists \underline{\theta}\in \mathfrak{S}_K,\, u\in [N]_1\; s.t.\;  S_{\underline{i}}(J^{(v)})=J^{(u)}_{\underline{\theta}} \}\\=\{\underline{i}\in [M]_1^K:\; \forall v \in [N]_1\; \underline{\theta}(v,\underline{i})\in \mathfrak{S}_K \}.
\end{multline*}
This set contains those $K$-compositions of the functions of the IFS such that the fibers, in expectation, intersect only a small number of cylinders.

With the notation of Definition~\ref{b83} we consider the tree 
\begin{equation*}
    \widetilde{\mathcal{T}}=(\mathcal{T}^{(K)})_{\mathfrak{D}_K}.
\end{equation*}
Let $\widetilde{\mathcal{E}}_n$ denote the level-$n$ nodes of the tree $\widetilde{\mathcal{T}}$. Further, on the boundary $\partial \widetilde{\mathcal{T}}$, we define the metric $d(\mathbf{i}, \mathbf{j})=L^{-K|\mathbf{i}\wedge \mathbf{j}|}$, where $|\mathbf{i}\wedge \mathbf{j}|$ is the length of the longest common prefix of $\mathbf{i}\neq \mathbf{j}$, and $d(\mathbf{i}, \mathbf{i})=0$.
We collect the properties of $\widetilde{\mathcal{T}}$ in the following corollary of Lemma~\ref{tree-facts}.
\begin{corollary}[Properties of $\widetilde{\mathcal{T}}$]\label{a37}
    The tree $\widetilde{\mathcal{T}}$ is a labelled Galton–Watson tree, satisfying the following properties.
    \begin{enumerate}
    \item Let $\underline{p}^{(K)}=(p^{(K)}_g)_{g\subset [M]_1^K}$ be the distribution of $\mathcal{E}_K$. The offspring distribution of $\widetilde{\mathcal{T}}$ is $\widetilde{\underline{p}}=(\widetilde{p}_h)_{h\subset [M]_1^K}$
    \begin{equation}
        \widetilde{p}_h=\sum_{\substack{g\subset [M]_1^K\\ g\cap \mathfrak{D}_K=h}}p^{(K)}_g
    \end{equation}
    on the set $\mathcal{P}([M]_1^K)$, and 
    \item $e(\widetilde{\underline{p}})\geq s_0^K(1-\delta_0^K)$, hence
    \item with positive probability $\partial \widetilde{\mathcal{T}}\neq \emptyset$ and
    \item with respect to the metric $d$, $\dimH(\partial \widetilde{\mathcal{T}})\geq s-\varepsilon/3$ almost surely conditioned on non-extinction. 
    \end{enumerate}
\end{corollary}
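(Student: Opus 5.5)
The plan is to obtain all four items of Corollary~\ref{a37} from Lemma~\ref{tree-facts}, applied in two stages. Lemma~\ref{tree-facts}(2) says that $\mathcal{T}^{(K)}$ is a labelled Galton--Watson tree on the alphabet $[M]_1^K$ with offspring distribution $\underline{p}^{(K)}$, the law of $\mathcal{E}_K$. Applying Lemma~\ref{tree-facts}(1) to this tree, with the deterministic set $D=\mathfrak{D}_K\subset[M]_1^K$, shows that $\widetilde{\mathcal{T}}$ is again a labelled Galton--Watson tree. Its offspring distribution is given by \eqref{b84} with $\underline{p}$ replaced by $\underline{p}^{(K)}$, which is exactly the formula in item (1).

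For item (2), the plan is to compute
\[
e(\widetilde{\underline{p}})=\mathbb{E}\,\#(\mathcal{E}_K\cap\mathfrak{D}_K)=\sum_{\underline{i}\in\mathfrak{D}_K}\mathbb{P}(\underline{i}\in\mathcal{E}_K)
\]
and compare it with Lemma~\ref{bigslices}. By Fact~\ref{a96} and the definition \eqref{eq:expmatrixword}, for each fixed $v$ we have
\[
\sum_{\underline{i}\in[M]_1^K}\mathbb{P}(\underline{i}\in\mathcal{E}_K)\,\ind\{\underline{\theta}(v,\underline{i})\in\mathfrak{S}_K\}=\sum_{\underline{\theta}\in\mathfrak{S}_K}\sum_u M_{\underline{\theta}}(u,v).
\]
Summing over $v$ then gives $\sum_{\underline{\theta}\in\mathfrak{S}_K}\|\mathbf{M}_{\underline{\theta}}\|$.

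The key step is passing from the condition ``$\underline{\theta}(v,\underline{i})\in\mathfrak{S}_K$ for some $v$'' to the requirement ``for all $v$'' in the definition of $\mathfrak{D}_K$. To handle it, write $A_v=\{\underline{i}:\underline{\theta}(v,\underline{i})\notin\mathfrak{S}_K\}$, so that $\mathfrak{D}_K^c=\bigcup_v A_v$. Then
\[
\sum_{\underline{i}\in\mathfrak{D}_K^c}\mathbb{P}(\underline{i}\in\mathcal{E}_K)\le\sum_v\sum_{\underline{i}\in A_v}\mathbb{P}(\underline{i}\in\mathcal{E}_K)\le\sum_{\underline{\theta}\in B_K}\|\mathbf{M}_{\underline{\theta}}\|<(s_0\delta_0)^K,
\]
using observation (3) in the proof of Lemma~\ref{bigslices}, which applies since $K>N_0$. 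Since $\sum_{\underline{i}}\mathbb{P}(\underline{i}\in\mathcal{E}_K)=s_0^K$, we get $e(\widetilde{\underline{p}})\ge s_0^K(1-\delta_0^K)$. Getting this union-bound bookkeeping right, so that the for-all condition costs only a factor $N$ absorbed in the estimate, is where I expect the main care to be needed.

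Item (3) then follows from the choice $s_0^K(1-\delta_0^K)>1$ together with Lemma~\ref{tree-facts}(1): a supercritical Galton--Watson tree survives with positive probability. For item (4), apply Lemma~\ref{tree-facts}(3) with $\rho=L^{-K}$. This gives, almost surely on non-extinction,
\[
\dimH\partial\widetilde{\mathcal{T}}=\frac{\log e(\widetilde{\underline{p}})}{K\log L}\ge\frac{\log\big(s_0^K(1-\delta_0^K)\big)}{K\log L}\ge s-\varepsilon/3,
\]
where the last inequality is the second condition imposed in the choice of $K$.
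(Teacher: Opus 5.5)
Your proposal is correct and follows essentially the same route as the paper: item (1) comes from Lemma~\ref{tree-facts}(2) and (1), item (2) from a union bound over $v\in[N]_1$, and items (3) and (4) from Lemma~\ref{tree-facts} together with the conditions imposed on $K$. The only cosmetic difference is in item (2): you bound the mass of $\mathfrak{D}_K^c$ directly by $\sum_{\underline{\theta}\in B_K}\|\mathbf{M}_{\underline{\theta}}\|<(s_0\delta_0)^K$, whereas the paper writes $\ind\{\forall v\}\ge 1-N+\sum_v\ind\{\underline{\theta}(v,\underline{i})\in\mathfrak{S}_K\}$ and then invokes the conclusion of Lemma~\ref{bigslices}; the two computations are algebraically equivalent.
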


\begin{proof}
    The proof follows from Lemma~\ref{tree-facts}, considering that
    \begin{align*}
         e(\widetilde{\underline{p}})& =
        \sum_{h\subset [M]_1^K}  \#h \widetilde{p}_h
        =\sum_{h\subset \mathfrak{D}_K} \#h \sum_{g\subset [M]_1^K} \mathbb{P}(\mathcal{E}_K=g)\ind\{\mathfrak{D}_K\cap g=h\} \\
        &= \sum_{h\subset \mathfrak{D}_K} \sum_{\underline{i}\in [M]_1^K} \sum_{g\subset [M]_1^K} \mathbb{P}(\mathcal{E}_K=g)\ind\{\underline{i}\in h, \mathfrak{D}_K\cap g=h\}=
        \sum_{h \subset\mathfrak{D}_K}\sum_{\underline{i}\in h}\mathbb{P}(\underline{i}\in \mathcal{E}_K) \\
        &=\sum_{\underline{i}\in [M]_1^K} \ind\{\forall v\in [N]_1:\: \underline{\theta}(v,\underline{i})\in \mathfrak{S}_K\}\cdot \mathbb{P}(\underline{i}\in \mathcal{E}_K) \geq  \sum_{\underline{i}\in[M]_1^{K}}(1-N) \cdot \mathbb{P}(\underline{i}\in \mathcal{E}_K)+\sum_{\underline{\theta}\in \mathfrak{S}_K}\|\mathbf{M}_{\underline{\theta}}\| \\ &\geq  s_0^K(1-\delta_0^K).
    \end{align*}
    Now we explain the last two inequalities.

    The first inequality follows from applying the union bound for the complement.
    \begin{align*}
        &\ind\{\forall v\in [N]_1:\: \underline{\theta}(v,\underline{i})\in \mathfrak{S}_K\} \geq   1- \sum_{v\in[N]_1} \ind\{\underline{\theta}(v,\underline{i})\notin \mathfrak{S}_K\}\\ = & 1- (N-\sum_v \ind\{\underline{\theta}(v,\underline{i})\in \mathfrak{S}_K\}).
    \end{align*}

    For the last inequality we use $\sum_{\underline{i}\in[M]_1^K}\mathbb{P}(\underline{i}\in\mathcal{E}_K)=\mathbb{E}(\#\mathcal{E}_K)=s_0^K$ and Lemma~\ref{bigslices} with $n=K$ (recall that $K> N_0$):
\begin{equation*}
    (1-N)\,s_0^K+\sum_{\underline{\theta}\in \mathfrak{S}_K}\|\mathbf{M}_{\underline{\theta}}\|
    \geq (1-N)\,s_0^K+N s_0^K\left(1-\frac{\delta_0^K}{N}\right)
    = s_0^K\left(1-\delta_0^K\right).
\end{equation*}
\end{proof}

 Recall that for $\underline{\theta}\in [L^d]_0^n$ and $u,v \in \{1, \dots, N\}$, $Z_n^{(u)}(\underline{\theta})(v)=\#\{\underline{i}\in \mathcal{E}_n:\; S_{\underline{i}}(J^{(v)})=J_{\underline{\theta}}^{(u)} \}$. They 
form the vectors $\underline{Z}_{n}^{(u)}(\underline{\theta})=(Z_{n}^{(u)}(\underline{\theta})(v))_{v\in [N]_1}$. For $\underline{\beta}=(\underline{\theta}_1,\dots, \underline{\theta}_n)$, $\underline{\theta}_i\in[L^d]_0^K$ we define 
 $\widetilde{\underline{Z}}_n^{(u)}(\underline{\beta})$ corresponding to $\widetilde{\mathcal{E}}_n$ analogously. We remark that for $\underline{\beta}\in \mathfrak{S}_K^n$ the vector random variables $\underline{Z}_{nK}^{(u)}(\underline{\beta})$ upper bounds $\widetilde{\underline{Z}}_{n}^{(u)}(\underline{\beta})$. In what follows, we fix a realization $\omega\in \Omega$.
Before the final proof we present two auxiliary lemmas, Lemma \ref{a53} and \ref{lem:uniform-bound}. The proof of these is deferred to the end of the section.
\begin{lemma}\label{a53}
Assume that for $\delta>0$, $c>0$ and $N_0\in \N$, for every $m>N_0$, for all $\underline{\beta}\in \mathfrak{S}_K^m$ and $u,v\in[N]_1$
\begin{equation*}
    \widetilde{Z}_{m}^{(u)}(\underline{\beta})(v)(\omega) \leq c \cdot L^{m\delta}.
\end{equation*}
Then 
\begin{equation*}
    \dimH (\partial \widetilde{\mathcal{T}}(\omega))\leq \dimH(\pi(\partial \widetilde{\mathcal{T}}(\omega)))+2\delta,
\end{equation*}
where $\pi$ is the natural projection corresponding to the deterministic IFS $\mathcal{S}$.
\end{lemma}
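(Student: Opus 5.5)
We compare the two dimensions through a mass distribution argument. Take any Borel probability measure on $\partial\widetilde{\mathcal{T}}(\omega)$ of large dimension, push it forward by $\pi$, and check that the local dimension drops by at most $2\delta$. Concretely, by Frostman's lemma (or the mass distribution principle on the symbolic side), for every $\alpha<\dimH(\partial\widetilde{\mathcal{T}}(\omega))$ there is a probability measure $\nu$ on $\partial\widetilde{\mathcal{T}}(\omega)$ with $\nu([\mathbf{i}|_m])\lesssim L^{-Km\alpha}$ for all cylinders; here the metric is $d(\mathbf{i},\mathbf{j})=L^{-K|\mathbf{i}\wedge\mathbf{j}|}$. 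We will show that $\pi_*\nu(B(x,r))\lesssim r^{(\alpha-2\delta)}$ for every $x$ and small $r$, possibly after absorbing $K$ into the exponent of $L$. The mass distribution principle then gives $\dimH\pi(\partial\widetilde{\mathcal{T}})\geq\alpha-2\delta$, and letting $\alpha$ tend to the dimension proves the lemma.

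The key step is a counting bound. Fix $r\approx L^{-Km}$ with $m>N_0$ and a ball $B(x,r)$. This ball meets only a bounded number (depending on $\mathcal{S}$ and $d$) of level-$Km$ $L$-adic cubes $J^{(u)}_{\underline{\beta}}$. Each level-$m$ node $\underline{i}\in\widetilde{\mathcal{E}}_m$ projects to a cylinder $S_{\underline{i}}(I)$, and $I$ is a finite union of basic cubes $J^{(v)}$. By Fact~\ref{a96}, $S_{\underline{i}}(J^{(v)})=J^{(u)}_{\underline{\beta}}$ for some $u$ and $\underline{\beta}\in[L^d]_0^{Km}$.

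We next need to know that the relevant $\underline{\beta}$ lie in $\mathfrak{S}_K^m$. Since every $K$-block $\underline{i}_j$ of $\underline{i}$ lies in $\mathfrak{D}_K$, the digits $\underline{\theta}(v',\underline{i}_j)$ lie in $\mathfrak{S}_K$ for every type $v'$. Iterating Fact~\ref{a96}(2) writes $\underline{\beta}$ as a concatenation of such digits, each taken with the appropriate intermediate type. Hence $\underline{\beta}\in\mathfrak{S}_K^m$.

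With this in hand, the number of nodes $\underline{i}\in\widetilde{\mathcal{E}}_m$ whose cylinder meets $B(x,r)$ is at most $C\sum_{u,v}\widetilde{Z}^{(u)}_m(\underline{\beta})(v)\lesssim cN^2L^{m\delta}$ by the hypothesis. This also accounts for the neighbouring cubes, which are again of the form $J^{(u')}_{\underline{\beta}'}$ for nodes of the tree.

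Now, $\pi^{-1}(B(x,r))\cap\partial\widetilde{\mathcal{T}}$ is covered by these level-$m$ symbolic cylinders, each of $\nu$-mass $\lesssim L^{-Km\alpha}$. Hence $\pi_*\nu(B(x,r))\lesssim L^{m\delta}L^{-Km\alpha}\leq r^{\alpha-\delta}$, using $K\geq1$. Intermediate radii between $L^{-K(m+1)}$ and $L^{-Km}$ only cost a constant factor $L^{K\alpha}$, and the small values $m\le N_0$ are handled by the constant. This gives the loss $\delta$, and the $2\delta$ in the statement leaves slack for normalisation between the two metrics. One subtlety must be checked: the dimension of $\pi(\partial\widetilde{\mathcal{T}})$ refers to the Euclidean metric, while cylinders of level $m$ have diameter $\approx L^{-Km}$ up to a constant. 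This is consistent with the scaling above.

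The main obstacle is the bookkeeping step that guarantees that the cubes met by the ball are indexed by words in $\mathfrak{S}_K^m$. This requires the propagation of types through Fact~\ref{a96} blockwise: type $v$ at the bottom determines the intermediate types, and each block digit is $\underline{\theta}(v',\underline{i}_j)\in\mathfrak{S}_K$. It also requires that $\widetilde{Z}$ counts exactly these nodes. I would verify this by induction on $m$, using Fact~\ref{a96}(2) once per $K$-block.
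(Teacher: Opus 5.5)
Your proof is correct, but it runs the argument in the dual form to the paper's.

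\textbf{The paper's route (covers).} It sets $t=\dimH\pi(\partial\widetilde{\mathcal T}(\omega))$ and covers the projection by $L$-adic cubes $I_i\in\mathcal D_{K\ell(I_i)-1}$ with $\sum|I_i|^{t+\delta}<1$. It pulls each $I$ back to the set $g(I)$ of level-$\ell(I)$ nodes of $\widetilde{\mathcal T}$ whose cubes $S_{\underline j}(J^{(u)})$ meet $I$. It bounds $\#g(I)\lesssim L^{K\ell(I)\delta}$ by the hypothesis, and concludes $\mathcal H^{t+2\delta}(\partial\widetilde{\mathcal T}(\omega))<\infty$.

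\textbf{Your route (measures).} You push forward a Frostman measure from the tree boundary. You use the same multiplicity count to bound $\pi_*\nu(B(x,r))$, and finish with the mass distribution principle.

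\textbf{Shared core.} Both arguments rest on the same combinatorial fact. A cube of $\mathcal D_{Km-1}$ is met by only $\lesssim L^{m\delta}$ level-$m$ nodes, because applying Fact~\ref{a96}(2) block by block forces the relevant words into $\mathfrak S_K^m$. Your verification of this bookkeeping is exactly what the paper uses implicitly.

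\textbf{What each buys.} The paper's approach is more elementary: it needs only the definition of $\mathcal H^s$ and the reduction to $L$-adic covers. Yours additionally requires Frostman's lemma on the boundary of a tree (max-flow/min-cut). In exchange, yours gives the loss $\delta$ directly rather than $2\delta$. The paper's argument would also give $\delta$ if one started from $\mathcal H^{t+\delta'}=0$ and let $\delta'\to0$.

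\textbf{One small correction.} The cube $I$ need not be a union of basic cubes; it can contain elements of $\mathcal D_{-1}$ of zero $\zeta$-measure. What you actually use is $\pi([\underline i])\subset\bigcup_v S_{\underline i}(J^{(v)})$. This holds because the attractor of $\mathcal S$ lies in the union of the basic cubes, and the paper relies on the same fact when it asserts that the cylinders over the sets $g(I_i)$ cover $\partial\widetilde{\mathcal T}(\omega)$.
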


\begin{lemma}\label{lem:uniform-bound}
Fix $\delta>0$.  There exists an event $\Omega_{\delta}$ with $\mathbb{P}(\Omega_{\delta})>0$, such that for $\omega\in \Omega_{\delta}$:
\begin{enumerate}
  \item $\partial \widetilde{\mathcal{T}}(\omega)\neq \emptyset$.
  \item There exist constants $C(\omega)>0$, and $\widehat{N}_0(\omega)$ such that for $m>\widehat{N}_0$ for every $\underline{\beta} \in \mathfrak{S}_K^m$ and every $u,v\in[N]_1$,
  \begin{equation}\label{eq:uniform-bound}
    \widetilde{Z}_m^{(u)}(\underline{\beta})(v)(\omega) \le C\cdot L^{m\delta}.
  \end{equation}
\end{enumerate}
\end{lemma}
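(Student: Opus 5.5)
The plan is a first-moment bound on the expected sizes of the light fibres, combined with a union bound over the fibres, Borel--Cantelli, and a final conditioning argument. Throughout, $\delta>0$ is fixed.

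\emph{Expectation of a single fibre.} For $\underline{\beta}=(\underline{\theta}_1,\dots,\underline{\theta}_m)\in\mathfrak{S}_K^m$ and $u,v\in[N]_1$, the tree $\widetilde{\mathcal{T}}$ is a subtree of $\mathcal{T}^{(K)}$. Hence $\widetilde{Z}_m^{(u)}(\underline{\beta})(v)\le Z_{mK}^{(u)}(\underline{\beta})(v)$, and so
\begin{equation*}
\mathbb{E}\big[\widetilde{Z}_m^{(u)}(\underline{\beta})(v)\big]\le M_{\underline{\beta}}(u,v)\le \|\mathbf{M}_{\underline{\theta}_1}\cdots\mathbf{M}_{\underline{\theta}_m}\|\le \prod_{j=1}^m\|\mathbf{M}_{\underline{\theta}_j}\|\le \rho^m,
\end{equation*}
where $\rho:=\max_{\underline{\theta}\in\mathfrak{S}_K}\|\mathbf{M}_{\underline{\theta}}\|<1$. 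The value $\rho$ is strictly less than $1$ because $\mathfrak{S}_K$ is finite and each of its elements has norm $<1$. The product bound uses submultiplicativity of the $1$-norm on non-negative matrices.

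This bound alone decays too weakly for a union bound over the $(L^{dK})^m$ words in $\mathfrak{S}_K^m$. Markov's inequality only gives
\begin{equation*}
\mathbb{P}\big(\widetilde Z\ge L^{m\delta}\big)\le \rho^m L^{-m\delta},
\end{equation*}
which is not summable against $L^{dKm}$ words.

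\emph{Higher moments.} This is the main obstacle. I would replace Markov's inequality by a higher moment, or by an exponential moment. The key observation is that the vector $\widetilde{\underline{Z}}_m^{(u)}(\cdot)$ evolves like a multitype branching process in the deterministic environment $\underline{\theta}_1,\dots,\underline{\theta}_m$. Its mean matrices $\mathbf{M}_{\underline{\theta}_j}$ all have norm at most $\rho<1$, so the process is uniformly subcritical, and the offspring numbers are bounded by $M^K$.

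For bounded-offspring subcritical processes I expect a bound
\begin{equation*}
\mathbb{E}\big[\widetilde Z^q\big]\le C_q,
\end{equation*}
uniformly in $m$ and $\underline{\beta}$, for every integer $q$. I would prove it by induction on $q$, expanding the $q$-th moment through the generating-function recursion over the levels. Each factorial moment picks up a geometric series $\sum_j \rho^j$, which converges.

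Given this, Markov's inequality with $q>dK/\delta$ yields
\begin{equation*}
\mathbb{P}\big(\widetilde Z_m^{(u)}(\underline{\beta})(v)>L^{m\delta}\big)\le C_q L^{-qm\delta}.
\end{equation*}
A union bound over the $N^2 L^{dKm}$ triples $(\underline{\beta},u,v)$ gives a summable sequence in $m$. Borel--Cantelli then shows that, almost surely, \eqref{eq:uniform-bound} holds with $C=1$ for all $m>\widehat N_0(\omega)$.

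\emph{Conclusion.} Since \eqref{eq:uniform-bound} holds almost surely, I would take
\begin{equation*}
\Omega_\delta:=\{\partial\widetilde{\mathcal{T}}\neq\emptyset\}\cap\{\text{\eqref{eq:uniform-bound} holds}\}.
\end{equation*}
Its probability equals $\mathbb{P}(\partial\widetilde{\mathcal{T}}\neq\emptyset)$, which is positive by Corollary~\ref{a37}(3).

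If the moment induction proves awkward, a fallback is to work with exponential moments. One shows $\mathbb{E}[e^{\alpha\widetilde Z}]$ is bounded uniformly in $m$ for small $\alpha$. This follows because the composition of the generating functions stays in a fixed neighbourhood of $1$ under uniform subcriticality. The resulting tail bound $e^{-\alpha L^{m\delta}}$ is then far more than enough to absorb the union bound.
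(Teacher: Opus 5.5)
Your argument is correct, but it takes a different route from the paper.

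\textbf{The paper's route.} The paper never estimates moments of a whole fibre. It works one block of $K$ levels at a time, using the threshold function $\phi$ and the bad events $F_n$. By the Chernoff-type bound of Lemma~\ref{large_dev_lemma}, if a fibre contains $D_w\ge L^{(n-1)\widetilde{\delta}}$ type-$w$ individuals, their type-$v$ offspring number at most $(1+\alpha)M_{\beta_n}(w,v)D_w$, except with probability at most $\varrho^{L^{(n-1)\widetilde{\delta}}}$. Smaller populations are controlled by the crude bound $D\cdot M^K$. This doubly exponential decay absorbs the $(L^{dK})^n$ words, and Borel--Cantelli makes the one-step comparison hold eventually. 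A deterministic induction using $(1+\alpha)\mathfrak{m}<1$ then propagates the bound $Z_{nK}\le C L^{n\delta}$ on the full-tree fibres, which dominate $\widetilde Z_n$. The random constant $C(\omega)=N M^{\widehat N_0 K}$ absorbs the first $\widehat N_0$ blocks.

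\textbf{Your route.} You replace all of this by one uniform estimate for the process a light fibre performs: a uniformly subcritical, bounded-offspring multitype branching process in varying environment. After that you need only a single Markov/union-bound/Borel--Cantelli step.

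Your key moment claim is true, and the induction closes easily. Given a generation of total size $n$, the next generation is a sum of $n$ independent variables in $[0,M^K]$. Their means are at most row sums of matrices $\mathbf{M}_{\underline\theta}$ with $\underline\theta\in\mathfrak{S}_K$, hence at most $\rho$. So its $q$-th moment is at most $\rho^q n^q+\binom{q}{2}M^{Kq}n^{q-1}$. Write $a^{(q)}_k$ for the supremum over light environments of the $q$-th moment of the total fibre population after $k$ blocks. Then $a^{(q)}_k\le \rho^q a^{(q)}_{k-1}+\binom{q}{2}M^{Kq}a^{(q-1)}_{k-1}$, and $\sup_k a^{(q)}_k<\infty$ follows by induction on $q$, starting from $a^{(1)}_k\le\rho^k$.

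\textbf{What each buys.} Your route gives simplicity: no $\alpha$, $\widetilde\delta$, $\phi$ or induction in $n$. It also gives a deterministic constant $C=1$. Through your exponential-moment variant, using thresholds $cm$ in place of $L^{m\delta}$, it even gives a stronger conclusion: almost surely all light fibres are eventually $O(m)$. The cost is that this moment lemma for branching processes in varying environment must be proved from scratch. The paper only needs the standard large deviation estimate for i.i.d.\ sums.
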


The lower bound in \eqref{result} now follows by combining the previous lemmas. 
Recall that $\pi\colon\Sigma^{[M]_1}\to \R^d$ is the natural projection corresponding to the IFS $\mathcal{S}$.
From $\widetilde{\mathcal{T}}(\omega)\subset \mathcal{T}(\omega)$, it follows that for any $\omega\in\Omega$,
\begin{equation}
    \dimH(\pi(\partial\widetilde{\mathcal{T}}(\omega)))\leq \dimH(\pi(\partial  \mathcal{T}(\omega))).
\end{equation}
Choose $\delta=\varepsilon/3$. Combining Lemma~\ref{lem:uniform-bound} and Lemma~\ref{a53} gives that there exists $\Omega_{\delta}$, with $\mathbb{P}(\Omega_{\delta})>0$, such that for all $\omega\in \Omega_{\delta}$,
\begin{equation*}
    \dimH(\partial \widetilde{\mathcal{T}}(\omega))\leq \dimH(\pi(\partial \widetilde{\mathcal{T}}(\omega)))+2\varepsilon/3.
\end{equation*}
From Corollary~\ref{a37},
\begin{equation*}
    \dimH(\partial\widetilde{\mathcal{T}}(\omega))\geq s-\varepsilon/3.
\end{equation*}
Combining these with Lemma~\ref{a31} gives the desired result regarding the dimension formula in Phase~\ref{ph6}. This concludes the proof of the lower bound since by Lemma \ref{a31} the Hausdorff dimension of $\Lambda=\pi(\partial \mathcal{T})$ is constant almost surely conditioned on non-extinction.

\bigskip

\textbf{Proofs of Lemmas~\ref{a53}-\ref{lem:uniform-bound}}
\smallskip

\begin{proof}[Proof of Lemma~\ref{a53}]
    Let $t\coloneqq\dimH(\pi(\partial \widetilde{\mathcal{T}}(\omega)))$. Since $\mathcal{H}^{t+\delta}(\pi(\partial \widetilde{\mathcal{T}}(\omega)))=0$, for every $m> N_0$ there is a cover $\{I_i\}_{i\in \mathcal{I}}$ of $\pi(\partial \widetilde{\mathcal{T}}(\omega))$ with $I_i\in\mathcal{D}_{K\ell(I_i)-1}$, $\ell(I_i)\geq m$ for all $i\in\mathcal{I}$, and
    \begin{equation*}
        \sum_{i\in \mathcal{I}} |I_i|^{t+\delta}<1.
    \end{equation*}
    We pull this cover back to the symbolic space. For such a cube $I$, let
    \begin{equation*}
        g(I)=\{\underline{j}\in \widetilde{\mathcal{E}}_{\ell(I)}:\ \exists u\in[N]_1,\ S_{\underline{j}}(J^{(u)})\cap I\neq\emptyset\}.
    \end{equation*}
    Then $\bigcup_{i\in \mathcal{I}}\bigcup_{\underline{k}\in g(I_i)}[\underline{k}]$ covers $\partial\widetilde{\mathcal{T}}(\omega)$, and each $[\underline{k}]$ has diameter $L^{-K\ell(I_i)}\le L^{-Km}$.
    For $\underline{j}\in\widetilde{\mathcal{E}}_{\ell(I)}$, each $S_{\underline{j}}(J^{(u)})$ is a cube $J^{(u')}_{\underline\beta}\in\mathcal{D}_{K\ell(I)-1}$ with $\underline\beta\in \mathfrak{S}_K^{\ell(I)}$, and at most $3^d$ cubes of $\mathcal{D}_{K\ell(I)-1}$ meet $I$. Hence, by the assumption of the lemma,
    \begin{equation*}
        \#g(I)\lesssim_{d,N,c}  L^{K\ell(I)\delta}.
    \end{equation*}
    Using $L^{-K\ell(I)}\leq |I|$, the cost of this cover is
    \begin{align*}
        \sum_{i\in \mathcal{I}}\sum_{\underline{k}\in g(I_i)} |[\underline{k}]|^{t+2\delta}
        &=\sum_{i\in \mathcal{I}} \#g(I_i)\, L^{-K\ell(I_i)(t+2\delta)}
        \lesssim_{d,N,c}\sum_{i\in \mathcal{I}} L^{K\ell(I_i)\delta}\, L^{-K\ell(I_i)(t+2 \delta)}\\
        &= \sum_{i\in \mathcal{I}} L^{-K\ell(I_i)(t+\delta)}
        \leq  \sum_{i\in \mathcal{I}} |I_i|^{t+\delta}<1.
    \end{align*}
    Since $m$ was arbitrary, $\mathcal{H}^{t+2\delta}(\partial\widetilde{\mathcal{T}}(\omega))<\infty$, hence $\dimH(\partial\widetilde{\mathcal{T}}(\omega))\le t+2\delta$.
\end{proof}
For the proof of Lemma \ref{lem:uniform-bound} we need an auxiliary lemma. The statement of the lemma relies on a certain comparison function.

For the fixed parameters $\widetilde{\delta}>0$ and $\alpha>0$ we define a comparison function.
Its arguments are $D\in\N$, $n\in\N$, $\underline{\theta}\in [L^d]_0^K$, and $u,v\in [N]_1$, and it is given by
\begin{equation*}
    \phi(D,n,\underline\theta,u,v)\coloneqq
    \begin{cases}
        D\cdot M_{\underline\theta}(u,v)\,(1+\alpha), & D\ge L^{(n-1)\widetilde{\delta}},\\[4pt]
        D \cdot M^K, & D<L^{(n-1)\widetilde{\delta}}.
    \end{cases}
\end{equation*}
Recall that $M_{\underline{\theta}}(u,v)$ is the expected number 
of maps sending $J^{(v)}$ to $J^{(u)}_{\underline{\theta}}$. 
The number of functions is $M$, and $M^K$ is a fixed uniform upper bound for the number of $K$-compositions. Hence, to summarize, the function $\phi$ defines the threshold for typical growth: for large populations, we bound the count by the expectation (with a small error margin), while for small populations, we fall back on the worst-case deterministic bound $D\cdot M^K$.

We will consider the bad event:
\begin{multline*}
F_n\coloneqq\Big\{\exists \underline\beta=(\beta_1,\dots, \beta_n)\in \mathfrak{S}_K^n,\ \exists u, v\in[N]_1:\\
Z_{nK}^{(u)}(\underline\beta)(v) \;>\; \sum_{w\in[N]_1} \phi\big(Z^{(u)}_{(n-1)K}(\underline\beta|_{n-1})(w),n,\,\beta_n,\,w,\,v\big)\Big\}.
\end{multline*}
In what follows, when the arguments of $Z$ are clear from the context we will omit them. 
\begin{lemma}\label{large_dev_lemma}
    For $\widetilde{\delta}>0$ and $\alpha>0$ there exists $\varrho\in(0,1)$ such that for $n\in \N$:
    \begin{equation*}
        \mathbb{P}(F_n)\leq (L^{dK})^n N^3 \varrho^{L^{(n-1)\widetilde{\delta}}}.
    \end{equation*}
\end{lemma}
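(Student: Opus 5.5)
The plan is a union bound over all configurations combined with a Chernoff-type large deviation estimate for sums of independent bounded random variables. For fixed $n$, $\underline\beta\in\mathfrak{S}_K^n$ and $u,v\in[N]_1$, the branching structure conditions nicely. Let $\mathcal{G}_{n-1}$ be the $\sigma$-algebra generated by the first $(n-1)K$ levels of the tree. We can write
\begin{equation*}
Z_{nK}^{(u)}(\underline\beta)(v)=\sum_{w\in[N]_1}\sum_{j=1}^{D_w}X_j^{(w)},\qquad D_w\coloneqq Z^{(u)}_{(n-1)K}(\underline\beta|_{n-1})(w).
\end{equation*}
Here the $X_j^{(w)}$ are, conditionally on $\mathcal{G}_{n-1}$, independent copies of $\#\{\underline{i}\in\mathcal{E}_K:S_{\underline i}(J^{(v)})=J^{(w)}_{\beta_n}\}$. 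This uses Fact~\ref{a96}: a type-$w$ cube $J^{(u)}_{\underline\beta|_{n-1}}$ coming from a retained cylinder splits into subcubes of the form $S(J^{(w)}_{\beta_n})$. Independence comes from the independence of the $\underline W$'s at distinct nodes. Each $X_j^{(w)}$ is bounded by $M^K$ and has mean $M_{\beta_n}(w,v)$; strictly this is the matrix entry with the row/column convention of \eqref{a90}, and I will simply match it to the definition of $\phi$.

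Since $\sum_w\phi(D_w,\dots)$ is a sum over $w$, the bad event for this $(\underline\beta,u,v)$ forces, for some $w$, the inequality $\sum_{j\le D_w}X^{(w)}_j>\phi(D_w,n,\beta_n,w,v)$. There are two regimes. If $D_w<L^{(n-1)\widetilde\delta}$, then $\phi=D_wM^K$ and the deterministic bound $X_j\le M^K$ makes the event impossible. If $D_w\ge L^{(n-1)\widetilde\delta}$, then $\phi=D_w M_{\beta_n}(w,v)(1+\alpha)$, and we apply Hoeffding's or Bernstein's inequality conditionally on $\mathcal{G}_{n-1}$. One subtlety is that for $M_{\beta_n}(w,v)=0$ the sum is a.s.\ $0$ and the event cannot occur. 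So only pairs with positive mean matter, and their means are bounded below by $m_*\coloneqq\min\{M_{\underline\theta}(w,v)>0:\underline\theta\in[L^d]_0^K\}>0$, a finite minimum. Hoeffding then gives
\begin{equation*}
\mathbb{P}\Big(\sum_{j\le D}X_j>D\,\mathbb{E}X(1+\alpha)\Big)\le \exp\big(-2D\alpha^2 m_*^2/M^{2K}\big)=\varrho^{D}\le\varrho^{L^{(n-1)\widetilde\delta}},
\end{equation*}
with $\varrho\coloneqq\exp(-2\alpha^2m_*^2/M^{2K})\in(0,1)$. This depends only on $\alpha,K$ and the system. Taking expectations over $\mathcal{G}_{n-1}$ preserves the bound, since it holds uniformly in $D\ge L^{(n-1)\widetilde\delta}$.

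Finally, we take a union bound over $w\in[N]_1$, over $u,v$ (giving $N^3$ in total), and over $\underline\beta\in\mathfrak{S}_K^n\subset([L^d]_0^K)^n$ (at most $(L^{dK})^n$ choices). This yields $\mathbb{P}(F_n)\le (L^{dK})^nN^3\varrho^{L^{(n-1)\widetilde\delta}}$.

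The main obstacle is making the conditional decomposition rigorous. Specifically, one has to check that the offspring counts inside the cube $J^{(u)}_{\underline\beta}$ really decompose as independent sums indexed by the retained level-$(n-1)K$ cylinders of each type $w$, with each summand distributed as the single-step count. This is also where the index conventions ($u,v,w$ and rows versus columns) must be matched carefully with the definition of $\phi$.
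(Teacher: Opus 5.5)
Your proposal is correct and follows the paper's proof. The shared steps are the union bound over $\underline\beta\in\mathfrak{S}_K^n$ and $u,v,w$, and the decomposition of $Z_{nK}^{(u)}(\underline\beta)(v)$ into conditionally i.i.d.\ sums over the retained level-$(n-1)K$ cylinders. The small-$D_w$ case is killed by the deterministic bound $M^K$, and the large-$D_w$ case by an exponential large-deviation estimate. Your use of Hoeffding with the explicit lower bound $m_*$, together with your separate treatment of zero-mean entries (which Lemma~\ref{large_dev_lem} does not formally cover, since it requires $m<b$), simply makes explicit the paper's choice of a uniform $\varrho$.
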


\begin{proof}[Proof of Lemma~\ref{large_dev_lemma}]

    By the union bound,
\[
\mathbb{P}(F_n) \le \sum_{\underline\beta\in \mathfrak{S}_K^n}\sum_{u,v\in[N]_1}
\mathbb{P}\Big( Z_{nK}^{(u)}(\underline\beta)(v) > \sum_{w} \phi(\cdot)\Big).
\]

\smallskip

Fix $\underline\beta\in \mathfrak{S}_K^n$ and $u,v\in [N]_1$. We can write $Z_{nK}^{(u)}(\underline\beta)(v)$ as a sum of independent contributions coming from members of generation $(n-1)K$:
\[
Z_{nK}^{(u)}(\underline\beta)(v)
\overset{d}{=} \sum_{w\in[N]_1}\sum_{j=1}^{D_{w}} Y_j(\beta_n,w,v),
\]
where $D_{w}\coloneqq Z_{(n-1)K}^{(u)}(\underline\beta)(w)$ and for fixed $\beta_n, w, v$ the random variables $Y_j(\beta_n,w,v)$ are i.i.d.\ with the same distribution as $Z^{(w)}_K(\beta_n)(v)$ (hence with mean $M_{\beta_n}(w,v)$). Here we use the offspring independence and identical distribution assumption at each parent (see Section \ref{a98}).
In what follows, we write $Z_{nK}(\underline\beta)(v)=Z_{nK}^{(u)}(\underline\beta)(v)$, since the dependence on $u$ will not have any effect.
In this way 
\begin{align*}
    \mathbb{P}\Big( Z_{nK}(\underline\beta)(v) > \sum_{w} \phi(\cdot)\Big)&= \mathbb{P}\Big( \sum_{w\in[N]_1}\sum_{j=1}^{D_{w}} Y_j(\beta_n,w,v) > \sum_{w} \phi(\cdot)\Big)\\
                                                                          &\leq \sum_{w\in[N]_1} \mathbb{P}\Big( \sum_{j=1}^{D_{w}} Y_j(\beta_n,w,v)> \phi(\cdot)\Big).
\end{align*}
For a fixed $w$ we use the law of total probability to consider the two separate cases when $D_w$ is big and when it is small, namely:
\begin{align*}
     &\mathbb{P}\Big( \sum_{j=1}^{D_{w}} Y_j(\beta_n,w,v) > \phi(\cdot)\Big)\\
    ={}&\mathbb{P}\Big( \sum_{j=1}^{D_{w}} Y_j(\beta_n,w,v) > \phi(\cdot)|D_{w}< L^{(n-1)\widetilde{\delta}}\Big)\mathbb{P}(D_{w}< L^{(n-1)\widetilde{\delta}}) 
    \\+{}&
    \mathbb{P}\Big( \sum_{j=1}^{D_{w}} Y_j(\beta_n,w,v) > \phi(\cdot)|D_{w}\geq L^{(n-1)\widetilde{\delta}}\Big)\mathbb{P}(D_{w}\geq L^{(n-1)\widetilde{\delta}}).
\end{align*}

The first part is simply $0$, because the number of maps is $M$, hence in $K$ levels the number of children can not grow faster than $M^K$. For the second part we use large deviation theory (see Lemma \ref{large_dev_lem}) for the independent sum. This gives:

\begin{equation*}
    \mathbb{P}\Big( \sum_{j=1}^{D_{w}} Y_j(\beta_n,w,v) > \phi(\cdot)|D_{w}\geq L^{(n-1)\widetilde{\delta}}\Big)\mathbb{P}(D_{w}\geq L^{(n-1)\widetilde{\delta}})<\varrho^{ L^{(n-1)\widetilde{\delta}}}
\end{equation*}
for some $\varrho\in(0,1)$ (depending only on the chosen $\alpha$ and the offspring law).
Because for a fixed $\alpha$ the possible offspring distributions run over only a finite set, we may take a uniform $\varrho\in(0,1)$ with the property above for all triples $(w,v,\underline\beta_n)$.

Adding up everything indeed gives the desired result.
\end{proof}

\begin{proof}[Proof of Lemma~\ref{lem:uniform-bound}]

Fix some $0<\widetilde{\delta}<\delta$. 
Let $\mathfrak{m}=\max_{\underline{\theta}\in \mathfrak{S}_K}\|\mathbf{M}_{\underline{\theta}}\|$. Since $\underline{\theta}\in \mathfrak{S}_K$ the matrices $\mathbf{M}_{\underline{\theta}}$ have norm less than one,
we can choose $\alpha$ small enough such that $(1+\alpha)\mathfrak{m}<1$. 
We use Lemma~\ref{large_dev_lemma} for $\alpha$ and $\widetilde{\delta}$.
This implies that
\begin{equation*}
    \sum_n \mathbb{P}(F_n)\leq N^3 \sum_n (L^{dK})^n \varrho^{L^{(n-1)\widetilde{\delta}}}<\infty.
\end{equation*}
Hence, by the Borel–Cantelli lemma $\mathbb{P}(\bigcap_n \bigcup_{k\geq n}F_k)=0 $.

In particular, there exists a subset $\Omega_{BC}\subset \Omega$ with $\mathbb{P}(\Omega_{BC})=1$, such that for all $\omega\in\Omega_{BC}$, there exists an $\widehat{N}_1\coloneqq \widehat{N}_1(\omega)$ such that for all $n\geq \widehat{N}_1$, for all $\underline{\beta}=(\beta_1,\dots, \beta_n)\in \mathfrak{S}_K^{n}$, and $u,v\in [N]_1$:

\begin{equation*}
    Z_{nK}^{(u)}(\underline{\beta})(v)(\omega)\leq \sum_{w\in[N]_1}\phi(Z_{(n-1)K}^{(u)}(\underline{\beta}|_{n-1})(w)(\omega),n, \beta_{n}, w, v).
\end{equation*}
We choose $\widehat{N}_0$ such that $\widehat{N}_0\geq \widehat{N}_1$, and further, for $n\geq \widehat{N}_0$ we have $L^{(n-1)(\widetilde{\delta}-\delta)}<L^{\delta}-\mathfrak{m}(1+\alpha)$.
We choose $C(\omega)= N\cdot M^{\widehat{N}_0(\omega)\cdot K}$. In this way,  for all $\underline{\beta}\in \mathfrak{S}_K^{\widehat{N}_0}$ and types $u,v\in[N]_1$
\begin{equation*}
    Z_{\widehat{N}_0\cdot K}^{(u)}(\underline{\beta})(v)(\omega)<C L^{\widehat{N}_0\cdot \delta}.
\end{equation*}
Now we prove that the analogous statement \eqref{eq:uniform-bound} holds for all $n$. From the choice of our variables, induction on $n$ shows that
for $n>\widehat{N}_0$ and $\underline{\beta}=(\beta_1, \dots, \beta_n)\in \mathfrak{S}_K^n$
\begin{align*}
    Z_{nK}^{(u)}(\underline{\beta})(v) \;<\;&  \sum_{w\in[N]_1} \phi\big(Z_{(n-1)K}^{(u)}(\underline{\beta}|_{n-1})(w),\,n,\,\beta_n,\,w,\,v\big)\\
\leq {}& N\cdot M^K  L^{(n-1)\widetilde{\delta}}+\sum_{w\in[N]_1} M_{\beta_n}(w,v)\cdot Z_{(n-1)K}^{(u)}(\underline{\beta})(w)(1+\alpha)\\
\leq {}& N\cdot M^K  L^{(n-1)\widetilde{\delta}}+ C L^{(n-1)\delta} \mathfrak{m}\cdot (1+\alpha) \\
\leq{}& C L^{(n-1)\delta}( L^{(n-1)(\widetilde{\delta}-\delta)}+ \mathfrak{m}\cdot (1+\alpha))\leq CL^{n\delta}.
\end{align*}

Since $\Omega_{BC}$ is a full probability set, the probability of the intersection with the set of those realizations where the `trimmed' process survives remains positive. This intersection gives the resulting event $\Omega_{\delta}$ since $\widetilde{Z}_n\leq Z_{nK}$.
\end{proof}

\subsection{Conclusion of the proof of Theorem~\ref{thm:dim}}
By \eqref{b91}, Lemma~\ref{b89} and Lemma~\ref{b90}, almost surely
\begin{equation*}
    \overline{\dim}_B(\Lambda)\leq \inf_{t\in[0,1]}\frac{P(t)}{\log(L)},
\end{equation*}
and by Section~\ref{sec:lowerbound}, almost surely conditioned on non-extinction,
\begin{equation*}
    \dimH(\Lambda)\geq \inf_{t\in[0,1]}\frac{P(t)}{\log(L)}.
\end{equation*}
Since $\dimH\Lambda\le\underline{\dim}_B\Lambda\le\overline{\dim}_B\Lambda$, and $\dimH(\Lambda)=\dimB(\Lambda)=\dimP(\Lambda)$ by Lemma~\ref{t99}, this proves \eqref{a42}.

\appendix

\section{Dimensions of sets}
In this section we formally introduce the Hausdorff measure and the Hausdorff, lower and upper box dimensions.
We follow \cite{zbMATH07759243,zbMATH00739280,falc_geom_fractal_sets}.

For a set $U\subset \R^d$ write $\diam(U)=\sup\{\|x-y\|:\, x, y \in U\}$.
A collection of sets $\{U_i\}_{i\in \mathcal{I}}$ is called a \emph{$\delta$-cover} of a set $E$ if $E\subset \bigcup_{i\in\mathcal{I}} U_i$ and, for all $i\in \mathcal{I}$, $\diam(U_i)\leq \delta$.
For $E\subset \R^d$ and $s\geq 0$ we define
\begin{equation*}
    \mathcal{H}^{s}_{\delta}(E)=\inf\left\{\sum \diam(U_i)^s, \, \{U_i\} \text{ is a countable }\delta\text{-cover of E}\right\}.
\end{equation*}
It is easy to see that $\mathcal{H}^s_{\delta}$ is non-increasing in $\delta$ and therefore the limit 
\begin{equation*}
    \mathcal{H}^{s}(E)=\lim_{\delta \to 0+}\mathcal{H}^{s}_{\delta}(E)
\end{equation*}
exists.
The resulting set-valued function $\mathcal{H}^s$ is in fact a metric outer measure which when restricted to the $\mathcal{H}^s$ measurable subsets of $\R^d$ gives the Hausdorff $s$-measure. 
The \emph{Hausdorff dimension} of $E\subset\R^d$ is
\begin{equation*}
    \dimH E\coloneqq\inf\{s\geq 0:\ \mathcal{H}^s(E)=0\}=\sup\{s\geq0:\ \mathcal{H}^s(E)=\infty\}.
\end{equation*}
For a finite Borel measure $\mu$ we define its Hausdorff dimension as
\begin{equation*}
    \dimH\mu\coloneqq\inf\{\dimH A:\ A \text{ Borel},\ \mu(A)>0\}.
\end{equation*}
The same definitions are used on the symbolic space $\Sigma^{[L^d]_0}$ equipped with the metric of Section~\ref{a46}.

Assume that $E$ is a bounded set in $\R^d$.
We define the \textit{upper} and \textit{lower box dimensions} of $E$ as 
\begin{align}
    \underline{\dim}_B(E)\coloneqq&\liminf_{n\to\infty}\frac{\log\#\{U_i\in \mathcal{D}_n: U_i\cap E\neq \emptyset\}}{n\log (L)}\\
    \overline{\dim}_B(E) \coloneqq  &\limsup_{n\to\infty}\frac{\log\#\{U_i\in \mathcal{D}_n: U_i\cap E\neq \emptyset\}}{n\log (L)}.
\end{align}
When they agree we say that the \emph{box dimension} exists, and we denote the common value by $\dim_B(E)$. For the definition and basic properties of the \emph{packing dimension} $\dim_P$ we refer to \cite[Chapter 5]{zbMATH00739280}.

\section{Symbolic space and trees}\label{app:symbolictrees}
It is convenient for us to work with symbolic spaces and trees.
Therefore, we now introduce some basic notation for use throughout this note.
The symbolic space over the alphabet $\{1, \dots, M\}$ is $\Sigma=\Sigma^{[M]_1}=\{1,\dots, M\}^{\N}$, equipped with the metric
\begin{equation*}
    d(\mathbf{i}, \mathbf{j})=\rho^{|\mathbf{i} \wedge \mathbf{j}|}
\end{equation*}
for some fixed $\rho\in(0,1)$. Here $\mathbf{i} \wedge \mathbf{j}$ is the maximal common prefix of $\mathbf{i}$ and $\mathbf{j}$ and $|.|$ denotes the length.
We use this metric space to encode the points in the attractor in the following way.
Let $\pi\colon\Sigma\to \R^d$ be defined as 
\begin{equation*}
    \pi(\mathbf{i})=\lim_{n\to \infty} S_{\mathbf{i}|_n}(\underline{0}),\,\text{for }\mathbf{i}\in \Sigma,
\end{equation*}
where $\mathbf{i}|_n$ stands for the first $n$ letters of $\mathbf{i}$.
This is called the \textit{natural projection} corresponding to the IFS $\mathcal{S}$.

It will also be convenient to view the space $\Sigma$ as an $M$-ary tree.
We introduce some basic tree terminology, based on the introduction of the book
\cite{zbMATH06653785} (which we also recommend for further graph terminology).



We will always work with locally finite rooted trees, where each parent has only a finite number of children.
In fact, we will always assume that for some $2\leq M\in \N$ each parent has at most $M$ children.
Sometimes we will work with $M$-ary trees, by which we mean that each vertex has exactly $M$ children.
A vertex $v$ is called a level-$n$ vertex if the shortest directed path from the root to $v$ has exactly $n+1$ vertices. 

We represent a rooted tree in which each vertex has at most $M$ children using finite words over the alphabet $[M]_1$.  
Let us describe the construction of the full rooted tree.
The root of the tree is the empty word, denoted by $\emptyset$.
Recall that for $n\geq 0$, we write 
\[
\Sigma_n = \{1,\dots,M\}^n, \qquad \Sigma_* = \bigcup_{n\ge 0} \Sigma_n ,
\]
where we set $\Sigma_0 = \{\emptyset\}$.
In the tree the vertex set is $\Sigma_*$. The edge set is defined by
\[
E = \{\, (\underline{i},\underline{i}i_{n+1}) : 
     \underline{i} \in \Sigma_*,\ i_{n+1} \in [M]_1 \,\}.
\]
Thus, each word or vertex is connected to its children obtained by appending one symbol. Every vertex $\underline{i}\neq\emptyset$ of length $n\geq 1$  has a unique parent 
$\underline{i}|_{n-1}$.

More generally, we may consider subtrees which are obtained as subsets of the full tree constructed above.
For a general tree $\mathcal{T}$, the boundary, $\partial \mathcal{T}$ of $\mathcal{T}$ consists of the infinite words $\mathbf{i}$ for which all finite prefixes $\mathbf{i}|_n$ ($n\in \N$) are vertices of $\mathcal{T}$.
Since we will use trees to represent the geometric structure of a random set obtained by the percolation process defined earlier, finite lineages (that is, vertices in the tree which have no children) will occur naturally.
Such lineages are invisible to the boundary of the tree.

\section{Survival of the process, and the event $\Lambda\neq \emptyset$}\label{survival}
In this section we discuss the event that $\mathcal{E}_{\infty}$ (and consequently $\Lambda$) is non-empty.

The process $\#\mathcal{E}_n$ is a Galton--Watson process (see for example \cite[Chapter 5.1]{zbMATH06653785}) $(Z_n)_{n\in \N}$ with $Z_0=1$ and offspring distribution $F=(p_k)_{k=0}^{M}$ 
\begin{equation*}
   p_k=\sum_{\substack{h\in H \\ \#h=k}}p_h. 
\end{equation*}
It follows from the theory of branching processes that the probability that the process survives (namely that $Z_n>0$ for all finite $n$) is $1-q$ where $q$ is the smallest non-negative fixed point of the probability generating function
\begin{equation*}
    f(s)=\sum_{k=0}^{M} p_k s^k.
\end{equation*}
From this it follows that the process survives (with positive probability) in the above sense if and only if $p_1=1$ (this is the critical case) or the expectation satisfies $\mathbb{E}(Z_1)>1$. 
In the event of survival, by Kőnig's lemma on the existence of infinite paths in locally finite trees, we have that $\mathcal{E}_{\infty}\neq \emptyset$; or $\Lambda\neq \emptyset$ since it is an intersection of a nested sequence of non-empty compact sets.

The substitution model (and the underlying Galton--Watson process) is called \emph{supercritical} if $\mathbb{E}(Z_1)>1$, \emph{critical} if $\mathbb{E}(Z_1)=1$, and \emph{subcritical} if $\mathbb{E}(Z_1)<1$.

\section{Branching property and statistical self-similarity}\label{a98}
For $\underline{i}\in \mathcal{E}_\ell, \underline{j}=j_1\dots j_n\in [M]_1^n$ let 
\begin{equation*}
    Q^{\underline{i}}_{\underline{j}}=W^{[\underline{i}]}_{j_1} W^{[\underline{i}j_1]}_{j_2}\dots W^{[\underline{i}j_1\dots j_{n-1}]}_{j_n}.
\end{equation*}
Consider the sub-process
\begin{equation*}
    \mathcal{E}_k^{(\underline{i})}=\{\underline{j}\in [M]_1^k: Q^{\underline {i}}_{\underline{j}}=1\}.
\end{equation*}
The process satisfies an important branching property: Let $\underline{i}_1,\dots, \underline{i}_m\in \mathcal{E}_\ell$, distinct level-$\ell$ nodes.
Then the sub-processes
\[
\bigl(\mathcal{E}_k^{(\underline{i}_1)}\bigr)_{k\ge 0},\dots,
\bigl(\mathcal{E}_k^{(\underline{i}_m)}\bigr)_{k\ge 0}
\]
are conditionally independent given $\sigma(\mathcal{E}_\ell)$, and each has the same distribution as the original process $(\mathcal{E}_k)_{k\ge 0}$.

At the level of sets, this property is called \textit{statistical self-similarity}.
\section{Inherited properties and a 0–1 law for trees}\label{a43}

In this section, we explain why certain events which occur with positive probability in fact occur with probability one, conditional on non-extinction.
This is a type of 0-1 law for Galton–Watson trees.
Note that this is not a consequence of the usual Kolmogorov 0-1 law as we do not have a nested sequence of independent $\sigma$-algebras to form a tail $\sigma$-algebra to which we could apply Kolmogorov's theorem.
Instead, we exploit the branching property and statistical self-similarity (see Section~\ref{a98}).

This idea has appeared in many books and papers on the topic of trees and independent percolation processes. We use the terminology of \cite[Proposition 5.6]{zbMATH06653785}.
We provide two interchangeable statement, one for labelled trees and one for compact sets. 
\begin{definition}
    A property $\mathcal{P}$ of a tree (resp.\ a compact set) is called an \emph{inherited property} if:
    \begin{itemize}
        \item The finite tree (resp.\ the empty set) has $\mathcal{P}$.
        \item If a tree $\mathcal{T}$ has $\mathcal{P}$, then for all vertices $\underline{i}$, the subtree $\mathcal{T}_{\underline{i}}$ has $\mathcal{P}$ (resp.\ If a compact set has $\mathcal{P}$ then each compact subset has $\mathcal{P}$ as well).
    \end{itemize}
\end{definition}

The key observation concerning inherited properties is the following lemma. 
\begin{lemma}\label{inherited}
    Suppose $\mathcal{P}$ is an inherited property of a tree (resp.\ of a compact set).
    Then the Galton–Watson tree (resp.\ the substitution random set constructed from the tree) satisfies $\mathcal{P}$ with probability either $0$ or $1$ conditional on non-extinction.
\end{lemma}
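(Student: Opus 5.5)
We prove Lemma~\ref{inherited} in the tree version; the compact-set version then follows by applying it to the property of $\mathcal{T}$ given by ``$\pi(\partial\mathcal{T})$ has $\mathcal{P}$''. That property is inherited because $\pi(\partial\mathcal{T}_{\underline{i}})$ is, up to the similarity $S_{\underline{i}}^{-1}$, a compact subset of $\pi(\partial\mathcal{T})$. If $\mathcal{P}$ is not similarity invariant, we instead work directly with the statistically self-similar pieces $S_{\underline{i}}(\Lambda^{(\underline{i})})$ from Section~\ref{a98}. The scheme is the standard one, as in \cite[Proposition 5.6]{zbMATH06653785}: we show that $q'\coloneqq\mathbb{P}(\mathcal{T}\text{ fails }\mathcal{P})$ is a fixed point of the offspring generating function $f$ lying in $[q,1]$, where $q$ is the extinction probability.

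\textbf{Step 1 (a fixed point equation).} Condition on the first generation $\mathcal{E}_1$. By the branching property of Section~\ref{a98}, the subtrees $\mathcal{T}_{i}$, $i\in\mathcal{E}_1$, are conditionally independent copies of $\mathcal{T}$. If $\mathcal{T}$ fails $\mathcal{P}$, then by heredity some $\mathcal{T}_i$ fails $\mathcal{P}$. Conversely, the root of $\mathcal{T}$ is then just a finite gluing of the subtrees. Here I would use the standing interpretation of inheritance: the boundary of $\mathcal{T}$ is the finite union of the boundaries of the $\mathcal{T}_i$. For the properties actually used, such as $\mathcal{H}^s(\cdot)=0$, $\mathcal{P}$ is stable under finite unions. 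Hence $\mathcal{T}$ has $\mathcal{P}$ if and only if every $\mathcal{T}_i$ does. This gives
\begin{equation*}
1-q'=\mathbb{E}\big[(1-q')^{\#\mathcal{E}_1}\big]=f(1-q').
\end{equation*}
The main subtlety sits in this step. The bare inheritance definition supplies only one direction, namely ``$\mathcal{T}$ has $\mathcal{P}$'' $\Rightarrow$ ``every $\mathcal{T}_i$ has $\mathcal{P}$'', which yields only $1-q'\le f(1-q')$. So I would first try to make do with this inequality.

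\textbf{Step 2 (conclusion from convexity).} Since finite trees have $\mathcal{P}$, we get $1-q'\ge q$. The function $f$ is convex on $[0,1]$ with fixed points exactly $q$ and $1$, and $f(x)<x$ strictly on $(q,1)$ in the supercritical case. Therefore $x\le f(x)$ together with $x\in[q,1]$ forces $x\in\{q,1\}$. In this argument the one-sided inequality from Step 1 is already enough, which resolves the obstacle.

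Both values can now be read off. If $1-q'=q$, then $\mathcal{P}$ holds exactly on extinction up to null sets, because extinction implies $\mathcal{P}$ and the probabilities coincide. Thus $\mathbb{P}(\mathcal{P}\mid\text{non-extinction})=0$. If $1-q'=1$, the conditional probability is $1$. In the critical case $p_1=1$, the tree is a single ray and the statement is trivial.
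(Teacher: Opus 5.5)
Your tree argument is exactly the proof of \cite[Proposition 5.6]{zbMATH06653785}, which is what the paper cites. Heredity gives $\{\mathcal{T}\text{ has }\mathcal{P}\}\subseteq\{\mathcal{T}_i\text{ has }\mathcal{P}\text{ for all } i\in\mathcal{E}_1\}$, hence $x\le f(x)$ for $x=\mathbb{P}(\mathcal{T}\text{ has }\mathcal{P})\ge q$, and supercriticality then forces $x\in\{q,1\}$. That part is correct.

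Delete the opening of Step 1, though. The claim ``if $\mathcal{T}$ fails $\mathcal{P}$, then by heredity some $\mathcal{T}_i$ fails'' is the converse of heredity, not a consequence of it. The finite-union discussion is also superfluous, since you only use the one-sided inequality. Your remark on $p_1=1$ is moot under the standing assumption $\mathbb{E}(\#\mathcal{E}_1)>1$. Even so, it is not trivial for a labelled ray: it would need ergodicity of the shift.

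\textbf{The gap is in the compact-set version.} Your reduction to the tree property ``$\pi(\partial\mathcal{T})$ has $\mathcal{P}$'' needs $\mathcal{P}$ to pass from $S_{\underline{i}}(K)$ to $K$. Your fallback for non-invariant $\mathcal{P}$ (working with the pieces $S_{\underline{i}}(\Lambda^{(\underline{i})})$) cannot give $x\le f(x)$, because those pieces are not distributed like $\Lambda$.

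No argument can work there: with the paper's definition (closure under compact subsets only), the compact-set statement is false. Take the coin-tossing model on the IFS $\{S_0,S_1,S_2\}$ of Example~\ref{ex:1d} with $1/3<p<1$, and the property ``$K\subseteq[0,2]$'', which is inherited in the paper's sense.
\begin{itemize}
\item On the positive-probability event $\{\mathcal{E}_1=\{0\},\ \Lambda^{(0)}\neq\emptyset\}$ we have $\emptyset\neq\Lambda=S_0(\Lambda^{(0)})\subseteq[0,2]$, so the property holds.
\item On the positive-probability event $\{2\in\mathcal{E}_1,\ \Lambda^{(2)}\neq\emptyset\}$ the property fails almost surely. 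Indeed $S_2(\Lambda^{(2)})\subseteq[2,4]$, and $\mathbb{P}(\Lambda^{(2)}=\{0\})\le\lim_n p^n=0$.
\end{itemize}
So the conditional probability given non-extinction lies strictly between $0$ and $1$.

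Replace the fallback by an explicit hypothesis that $\mathcal{P}$ is preserved by the maps $S_{\underline{i}}^{-1}$, e.g.\ similarity invariance. This covers $\mathcal{H}^s(\cdot)=0$, the only property the paper actually uses (in Lemma~\ref{a31}). With that hypothesis, your reduction is complete.
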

For the proof, see \cite[Proposition 5.6]{zbMATH06653785}.
\section{Large deviation lemma}
\begin{lemma}\label{large_dev_lem}
    Let $X_1, \dots, X_n$ be a sequence of finite i.i.d.\ random variables on $\R$ with $\mathbb{E}(X_1)=m$.
    Then for $a, b \in \R$, $a<m<b$ there exist $\gamma(a)\in(0, 1)$ and $\gamma(b)\in (0, 1)$ not depending on $n$ such that
    \begin{equation*}
        \mathbb{P}(X_1+\dots+X_n<n \cdot a)\leq \gamma(a)^n,\text{ and } \mathbb{P}(X_1+\dots+X_n>n \cdot b)\leq \gamma(b)^n.
    \end{equation*}
\end{lemma}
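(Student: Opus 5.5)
The bound is a Cramér–Chernoff exponential moment argument, handled separately for the two tails. I will treat the upper tail; the lower tail follows by applying the same argument to $-X_i$, with $-a>-m$. The phrase ``finite random variables'' will be read as bounded, which is the only case used in the paper: the offspring counts $Z^{(w)}_K(\theta)(v)$ are bounded by $M^K$. Under that reading the moment generating function $\Phi(\lambda)\coloneqq\mathbb{E}(e^{\lambda X_1})$ is finite and smooth for every $\lambda\in\R$.

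\emph{Step 1 (Markov on the exponential).} For any $\lambda>0$, Markov's inequality together with independence gives
\begin{equation*}
    \mathbb{P}(X_1+\dots+X_n>nb)\le e^{-\lambda nb}\,\mathbb{E}\big(e^{\lambda(X_1+\dots+X_n)}\big)=\big(e^{-\lambda b}\Phi(\lambda)\big)^n.
\end{equation*}
It therefore suffices to find some $\lambda_0>0$ with $g(\lambda_0)\coloneqq e^{-\lambda_0 b}\Phi(\lambda_0)<1$. We then set $\gamma(b)\coloneqq\max\{g(\lambda_0),1/2\}\in(0,1)$; taking the maximum with $1/2$ only guards against the value $0$, and $\gamma(b)$ does not depend on $n$.

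\emph{Step 2 (choice of $\lambda_0$).} Write $g(\lambda)=\mathbb{E}(e^{\lambda(X_1-b)})$. Then $g(0)=1$, and by dominated convergence, which is justified by boundedness, $g'(0)=\mathbb{E}(X_1-b)=m-b<0$. Hence $g(\lambda)<1$ for all sufficiently small $\lambda>0$, and we fix one such $\lambda_0$.

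For the lower tail, apply Steps 1 and 2 to $-X_i$ with threshold $-a>-m$; this yields $\gamma(a)$.

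The only genuine obstacle is integrability. For unbounded variables without exponential moments the statement can fail, so the plan is to state explicitly that the $X_i$ are bounded (or at least have $\Phi$ finite near $0$). This is enough for the application in the proof of Lemma~\ref{large_dev_lemma}, where we take $b=M_{\underline\theta}(w,v)(1+\alpha)$ and $n=D_w$. If the mean $M_{\underline\theta}(w,v)$ is $0$, the variable is almost surely $0$ and the probability in question is trivially $0$.
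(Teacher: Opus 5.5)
Your proof is correct. It is the standard Cram\'er--Chernoff (exponential Markov) argument; the paper does not reproduce a proof of this standard fact and instead cites T\'oth's notes and the thesis. Reading ``finite'' as bounded (or at least having an exponential moment near $0$) is the right interpretation, since the statement fails for heavy-tailed variables. Your remark on the degenerate case $M_{\underline\theta}(w,v)=0$, where $b=m(1+\alpha)=m$ and the lemma does not apply, usefully closes a small gap in how the paper invokes it in Lemma~\ref{large_dev_lemma}.
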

The proof is from \cite{toth2011statisztikus}, which we present in \cite[Section 1.10.3]{orgovanyi_thesis}.
\section{Conjugation of IFSs}\label{conjugation}
An IFS $\mathcal{S}'=\{S_i'\}$ is called \emph{bi-Lipschitz conjugate} to another IFS $\mathcal{S}=\{S_i\}$ if  $\#\mathcal{S}=\#\mathcal{S}'$ and there exists a permutation $\tau\colon\{1, \ldots, m\} \rightarrow\{1, \ldots, M\}$ and a bi-Lipschitz homeomorphism $h\colon\mathbb{R}^d \rightarrow \mathbb{R}^d$ such that ${S}_{\tau(i)}=h \circ S_i \circ h^{-1}$.
In this case the attractors satisfy $h(\Lambda_{\mathcal{S}})=\Lambda_{\mathcal{S}'}$ (see \cite[Section 1.3.3]{zbMATH07759243}).
From this it follows bi-Lipschitz conjugation of systems preserves the various properties studied in this note such as the dimension of sets, positivity of a measure of the set, and the existence of interior points.
We will use conjugation in the special case that $h$ is a similarity map, in which case the conjugated map has the same contraction ratio as the original system.  
\section*{Acknowledgements}
The authors thank Alex Rutar for his support. Further, as the manuscript grew out of parts of VO's PhD thesis, the authors thank Kenneth Falconer and Ville Suomala for their useful remarks on the thesis. We are grateful to Mark Pollicott and Polina Vytnova for their numerical estimates of the Lyapunov exponent.

Both authors received funding from the Hungarian Research Network through the HUN-REN--BME Stochastics Research Group. Furthermore, this work was supported by
the NKFI grants K142169, FK134251 and KKP144059
``Fractal geometry and applications'' Research Group. VO acknowledges support from the grant EKÖP-25-3-BME-2.

\section*{Tool and computational resource disclosure}
The AI model Claude (Anthropic) [Opus 5] was used mainly to improve the readability and presentation of the manuscript. It was also used to check parts of the proofs, which led to a few corrections and simplifications. The mathematical content and all arguments are the authors' own. The authors  take full responsibility for the correctness of the arguments and results and for the accuracy and completeness of the citations.
\bibliographystyle{plainnat} 
\bibliography{references}  

\end{document}